\newtheorem{theorem}{Theorem}[section]
\newtheorem{lemma}[theorem]{Lemma}
\newtheorem{proposition}[theorem]{Proposition}
\newtheorem{corollary}[theorem]{Corollary}
\newtheorem{definition}[theorem]{Definition}
\newtheorem{example}[theorem]{Example}
\newtheorem{remark}[theorem]{Remark}
\numberwithin{equation}{section}
\newcommand{\Z}{{\field{Z}}}
\newcommand{\R}{{\field{R}}}
\newcommand{\A}{{\mathcal{A}}}
\newcommand{\field}[1]{\mathbb{#1}}
\newcommand{\C}{{\field{C}}}
\newcommand{\la}{{\langle}}
\newcommand{\ra}{\rightarrow}
\newcommand{\id}{{\iota}}
\newcommand{\ot}{{\otimes}}
\newcommand{\tp}{{\widehat{\otimes}}}
\newcommand{\vtp}{{\overline{\otimes}}}
\newcommand{\mtp}{{\otimes_{min}}}
\newcommand{\N}{{\mathcal{N}}}
\newcommand{\M}{{\mathcal{M}}}
\newcommand{\T}{{\mathcal{T}}}
\newcommand{\B}{{\mathcal{B}}}
\newcommand{\lam}{{\lambda}}
\newcommand{\fee}{{\varphi}}
\newcommand{\si}{{\psi}}
\newcommand{\om}{{\omega}}
\newcommand{\G}{{\field{G}}}
\newcommand{\LL}{{L^\infty(\G)}}
\newcommand{\LO}{{L^1(\G)}}
\newcommand{\LT}{{L^2(\G)}}
\newcommand{\CZ}{{C_0(\G)}}
\newcommand{\CU}{{C_u(\G)}}
\newcommand{\LLL}{{L^\infty(\hat\G)}}
\newcommand{\loneqg}{L^1(\mathbb{G})}
\newcommand{\linfqg}{L^\infty(\mathbb{G})}
\begin{document}

\title{From Quantum Groups to Groups}
\author{Mehrdad Kalantar}

\address{Mehrdad Kalantar \newline School of Mathematics and Statistics,
Carleton University, Ottawa, Ontario, K1S 5B6, Canada}
\email{mkalanta@math.carleton.ca}

\author{Matthias Neufang}

\address{Matthias Neufang \newline School of Mathematics and Statistics,
Carleton University, Ottawa, Ontario, K1S 5B6, Canada}
\email{mneufang@math.carleton.ca}

\vspace{-0.7cm}
\address{${}$ \vspace{-0.4cm}\newline Universit\'{e} Lille 1 -- Sciences et Technologies,
UFR de Math\'{e}matiques, Laboratoire de Math\'{e}matiques Paul Painlev\'{e}  (UMR CNRS 8524),
59655 Villeneuve d'Ascq C\'{e}dex, France}
\email{Matthias.Neufang@math.univ-lille1.fr}
${}$ \\[-5ex]
\address{${}$ \vspace{-0.4cm}\newline The Fields Institute for Research in
Mathematical Sciences, Toronto, Ontario,
Canada M5T 3J1}
\email{mneufang@fields.utoronto.ca}


\begin{abstract}
In this paper we use the recent developments in the
representation theory of locally compact quantum groups,
to assign, to each locally compact
quantum group $\G$, a locally compact group $\tilde \G$ which
is the quantum version of point-masses, and is an
invariant for the latter. We show that ``quantum point-masses" 
can be identified with several other locally compact groups that can be
naturally assigned to the quantum group $\G$.
 This assignment preserves compactness as well as
discreteness (hence also finiteness), and for large classes of quantum
groups, amenability. We calculate this invariant for some of the most
well-known examples of
non-classical quantum groups.
Also, we show that several structural properties of $\G$ are encoded
by $\tilde \G$: the latter, despite being a simpler object, can carry very
important information about $\G$.
\end{abstract}

\maketitle

\section{Introduction}
\label{intro}
One of Murray and von Neumann's primary motivations to
define and study operator algebras was to study the representation
theory of groups, and in fact, those operator algebras associated
to groups have played a prominent role in the theory of operator algebras ever since.

Also, using different group properties, in constructing and studying various types of
operator algebras, has led to some of the deepest results in the subject.

The aim of this paper is to investigate operator algebras
 associated to locally compact quantum groups $\G$, by studying
 the properties of an assigned locally compact group $\tilde\G$.
 The latter, in the classical case, is the initial group,
 from which those operator algebras are constructed.
 We see that, despite being possibly much smaller object in 
 the non-commutative setting, the group $\tilde\G$ can carry very
important information about the quantum group $\G$.
\par
Locally compact quantum groups, as introduced and 
studied by Kustermans and Vaes in \cite{K-V}, 
provide a category which comprises both classical group algebras 
and group-like objects arising in 
mathematical physics such as Woronowicz's famous quantum group $SU_\mu(2)$.

To find a Pontryagin-type duality theorem which holds for all locally compact groups rather than just abelian ones,
one has to pass to the larger category of locally compact quantum groups.
In order to embed locally compact groups in this larger category,
one has to work with the algebras associated with a group. 
So, instead of working with a locally compact group $G$, we study $L^\infty (G)$, and we consider 
$VN(G)$ as its Pontryagin dual.

Also, there are many other occasions in which one prefers, or even has to pass from a group to
an associated (operator) algebra. But, then there are several equivalent ways to recover
the initial group from these algebras: $G$ is topologically isomorphic, for example, to
\begin{itemize}
\item
the spectrum of $C_0(G)$;
\item
the spectrum of $A(G)$;
\item
the set of all group-like elements in the symmetric quantum group $VN(G)$. 
\end{itemize}

In \cite{Wen} Wendel proved that for a locally compact group $G$, every positive isometric linear 
(left or right) $L^1(G)$-module
map on $L^1(G)$, i.e., every positive isometric (right or left) multiplier, has to be the
convolution by a point-mass. Moreover, the set of point-masses
regarded as maps on $L^1(G)$ with the strong operator (i.e., the
point-norm) topology is homeomorphic to the group $G$. In other words, he showed how
a locally compact group can be recovered from its measure algebra.

In \cite{J-N-R}, Junge, Neufang and Ruan defined an analogue of measure algebra for
locally compact quantum groups, and studied its structure and representation theory; 
in fact,
they investigated the quantum group analogue of the class of completely bounded 
multiplier algebras which play an important role in Fourier analysis over groups, by means of a
representation theorem. The latter result enables the authors
to express quantum group duality precisely in terms of a commutation relation.
When $\G = L^\infty(G)$ for a locally compact group $G$, then the algebra
of completely bounded multipliers $M_{cb} (L^1 (\G))$ defined in \cite{J-N-R} is the measure 
algebra of the group $G$.
So we can regard the
algebra $M_{cb} (L^1 (\G))$ as a quantization of the measure algebra.
This motivated us to look for objects similar to point-masses in the classical case, 
so-to-speak ``quantum point-masses''.

Following this path, in this paper we start with assigning 
to each locally compact quantum group $\G$, a locally compact group $\tilde \G$
 that is an invariant for the latter; we will later prove that 
this assignment preserves compactness as well as discreteness (hence also finiteness), and, for large classes of 
quantum groups, amenability.

We first prove some basic properties of this group before arriving at
one of our main results in section 3, Theorem \ref{322}, establishing identifications between 
several different locally compact
groups which can be assigned to a locally compact quantum group, including the intrinsic group
of the dual quantum group, as well as the spectrum of the universal $C^*$-algebra
and of the $L^1$-algebra of the dual quantum group.

In section 4, we calculate this associated group
for some well-known examples of locally compact quantum groups.
For Woronowicz's class of compact matrix pseudogroups, we always obtain a compact 
Lie group -- which in the case of $SU_\mu(2)$ is precisely the circle group.

In the last section of this paper, we present
various applications of studying this group. 
In particular, we show that for a large class of locally compact quantum groups,
the associated locally compact group cannot be ``small'', and in fact,
the smallness of the latter forces the former to be of a very specific type.
We also see that this group
carries some natural properties inherited from the locally compact quantum group,
which shows that this assignment is natural.

\par
The results in this paper are based on \cite{thesis},
written under the supervision of the second-named author.

\section{Preliminaries}

We recall from  \cite {K-V} and \cite {Vaes1} that a (von
Neumann algebraic) \emph{locally compact quantum group} $\G$ is a
quadruple $(\linfqg, \Gamma, \varphi, \psi)$, where $\linfqg$ is a
von Neumann algebra with a co-associative co-multiplication
$\Gamma: \linfqg\to \linfqg \bar\otimes \linfqg$, $\varphi$
and  $\psi $ are  (normal faithful semi-finite) left and right
Haar weights on $\linfqg$, respectively. 
We write $\M_\fee^+ = \{x\in\LL^+ : \fee(x)<\infty\}$ and
$\N_\fee = \{x\in\LL^+ : \fee(x^*x)<\infty\}$, and we denote
by $\Lambda_\fee$ the inclusion of $\N_\fee$ into the GNS Hilbert
space $L^{2}(\G, \varphi)$ of $\fee$. According to \cite [Proposition 2.11]{K-V},
we can identify $L^{2}(\G, \varphi)$ and $L^{2}(\G, \psi)$, and we simply
use $L^{2}(\G)$ for this Hilbert space in the rest of this paper
 
For each locally compact
quantum group $\G$, there exist a  \emph{left fundamental unitary
operator}  $W$ on $L^{2}(\G)\otimes L^{2}(\G)$
which satisfies  the  \emph{pentagonal relation}
 \begin{equation}
\label {F.pentagonal} W_{12} W_{13}W_{23} = W_{23} W_{12}
\end{equation}
and such that the co-multiplication $\Gamma$ on $\linfqg$ can be expressed as
\begin{equation}
\label {F.com} \Gamma(x) = W^{*}(1\otimes x)W
\quad(x \in \linfqg).
\end{equation}

Let $\loneqg$ be the predual of $\linfqg$. Then the pre-adjoint of
$\Gamma$ induces on $\loneqg$ an associative completely
contractive multiplication
\begin{equation}
\label {F.mul} \star  :  L^{1}(\G)\hat\otimes L^{1}(\G)\ni f_{1} \otimes f_{2}
 \mapsto f_{1} \star f_{2} = (f_{1} \otimes
f_{2})\circ \Gamma \in L^{1}(\G).
\end{equation}

The \emph{left regular representation} $\lambda : L^{1}(\G) \to
\B(L^{2} (\G))$ is defined by
 \[
\lambda : L^{1}(\G)\ni f   \mapsto \lambda(f) = (f\otimes \iota)(W)
\in \B(L^{2}(\G)),
 \]
which is an injective and completely contractive algebra homomorphism
from $L^{1}(\G)$ into $\B(L^{2} (\G))$.
 Then  $L^{\infty} (\hat \G)={\{\lambda(f): f\in \loneqg\}}''$
is the von Neumann algebra associated with the dual quantum group
$\hat \G$ of $\G$.  It follows that $W \in \linfqg \bar \otimes L^{\infty}(\hat \G)$.
We also define the completely contractive injection
\[
\hat\lambda:  L^{1}(\hat \G)\ni\hat f \mapsto \hat\lambda(\hat f) =
(\iota \otimes \hat f)(W)\in \linfqg.
\]
The \emph{reduced quantum group $C^*$-algebra}  
 $C_{0}(\G) = \overline{\hat\lambda(L_{1}(\hat \G) )}^{\|\cdot\|}$
  is a weak$^*$ dense $C^*$-subalgebra of $\LL$ with the 
 co-multiplication
\[
\Gamma : C_{0}(\G)\to M(C_{0}(\G)\otimes
C_{0}(\G))
\]
given by the  restriction of   the co-multiplication on $\LL$  to $C_{0}(\G)$,
where we denote by $M(C_{0}(\G)\otimes C_{0}(\G))$  the multiplier $C^*$-algebra of
the minimal $C^*$-algebra tensor product $C_{0}(\G)\otimes C_{0}(\G)$. 

Let $M(\G)$ denote the operator dual $C_{0}(\G)^{*}$.
There exists  a completely contractive multiplication on $M(\G)$ given by
the convolution
 \[
 \star : M(\G)\tp M(\G)\ni \mu\ot \nu \mapsto \mu \star \nu 
 = \mu (\id\otimes \nu)\Gamma = \nu (\mu \otimes \id)\Gamma
 \in M(\G)
 \]
such that  $M(\G)$ contains $\loneqg$ as a norm closed two-sided ideal.
 If $G$ is a locally compact group, then $C_{0}(\G_{a})$ is the
$C^*$-algebra $C_{0}(G)$ of continuous functions on $G$ vanishing at
infinity, and $M(\G_{a})$ is the measure algebra $M(G)$ of $G$.
Correspondingly, $C_{0}(\hat \G_{a})$ is the left reduced group $C^*$-algebra
$C^{*}_{\lambda}(G)$ of $G$. Hence, we have $M(\hat \G_{a})= B_{\lambda}(G)$.

For a locally compact quantum group $\G$, we
 denote by $S$ its {\it antipode}, which is the unique $\sigma$-strong$^*$
 closed linear map on $\LL$ 
satisfying $(\om \ot\id)(W) \in \mathcal{D}(S)$ for all 
$\om\in\B(\LT)_*$, and $S(\om \ot\id)(W) = (\om \ot\id)(W^*)$,
and such that the elements $(\om \ot\id)(W)$ form a $\sigma$-strong$^*$
 core for $S$. The antipode $S$
has a polar decomposition $S = R\tau_{-\frac i2}$,
 where $R$ is an anti-automorphism of $\LL$ and $(\tau_t)$
is a strongly continuous one-parameter group of automorphisms of $\LL$.
 We call $R$ the {\it unitary antipode} and $(\tau_t)$ the {\it scaling group} of $\G$.

There exist a strictly positive operator $\delta$,
affiliated with $M$, called \emph{modular element}, such that
$$\psi(x) = \varphi(\delta^{1/2} x \delta^{1/2})$$
for all $x\in \M_{\psi}$.
Also, we have $\Gamma(\delta^{it}) = \delta^{it} \otimes \delta^{it}$ for all $t\in \field{R}$.
We say that a locally compact quantum group
$\G$ is {\it unimodular} if $\delta = 1$.

If we define a strictly positive operator $P$ on $\LT$ such that
$P^{it}\Lambda_\fee(x) = \Lambda_\fee(\tau_t(x))$ for all
$t\in\mathbb{R}$ and $x\in\N_\fee$, then we have $\tau_t(x) = P^{it} x P^{-it}$, and
\begin{equation}\label{126}
P = \hat P  \  \ \ \ \text{and} \ \ \ \ 
\Delta_{\hat\fee}^{it} = P^{it} J_\fee\delta^{it} J_\fee,
\end{equation}
where $\Delta_{\hat\fee}$ is the modular operator
associated to $\hat\fee$ and $J_\fee$ is the modular conjugate associated to $\fee$.

Moreover, the following hold for all $t\in\field{R}$.
\begin{eqnarray}
\Gamma\circ{\tau_{t}} &=& (\tau_{t} \otimes \tau_{t})\circ\Gamma 
\ \ \ \ \ \ \ \ \ \ \, \ \ \ \ \ \ \ 
\Gamma\circ{\sigma^\fee_{t}} = (\tau_{t} \otimes \sigma^\fee_{t})\circ\Gamma\ ; \label{1.5}\\
\Gamma\circ{\tau_{t}} &=& (\sigma^\fee_{t} \otimes \sigma^\psi_{-t})\circ\Gamma 
\ \ \ \ \ \ \  \ \ \ \ \ \ \
\Gamma\circ{\sigma^\psi_{t}} = (\sigma^\psi_{t} \otimes \tau_{-t})\circ\Gamma \ .\label{1.6}
\end{eqnarray}
For more details,  we refer the reader to \cite{K-V}.

It is known that a locally compact quantum group $\G = (M,\Gamma,\fee,\psi)$ 
is a {\it Kac algebra} if and only if the antipode
$S$ is bounded and the modular element $\delta$ is affiliated with the center of $M$.

Let $L_{*}^1(\hat\G) = \{\hat\om\in L^{1}(\hat \G) : \exists \hat f\in L^{1}(\hat \G) 
\ \text{such that} \ \hat\lambda(\hat\om)^* = \hat\lambda(\hat f)\}$.
Then $L_{*}^1(\hat\G)\subseteq L^{1}(\hat \G)$ is norm dense, and with the involution
$\hat\om^* = \hat f$, and the norm 
$\|\hat\om\|_u = max\{\|\hat\om\| , \|\hat\om^*\|\}$, the space $L_{*}^1(\hat\G)$
becomes a Banach $^*$-algebra (for details see \cite {kus}).
We obtain the   \emph{universal quantum group $C^*$-algebra} $\CU$
as the universal enveloping $C^*$-algebra of the Banach algebra $L_{*}^1(\hat\G)$.
There is a universal $^*$-representation
\[
\hat \lambda_{u} : L_{*}^1(\hat \G)\to \B(H_{u})
\]
such that $\CU = \overline {\hat \lambda_{u}(L^{1}(\hat \G))}^{\|\cdot\|}$.
 There is  a universal co-multiplication
\[
\Gamma_u :\CU\ra M(\CU\otimes\CU), 
\]
and the operator dual $M_u(\G):= C_{u}(\G)^{*}$, which can be regarded
as the space of all {\it quantum measures} on $\G$, is a unital completely contractive Banach 
algebra with multiplication given by
\[
\om \star_{u} \mu = \om (\id\otimes \mu)\Gamma_{u} = \mu(\om\otimes \id)\Gamma_{u}.
\]
By universal property of $\CU$, there is a  unique surjective $^*$-homomorphism 
$\pi:\CU\ra\CZ$  such that $\pi(\hat\lambda_u(\hat\om)) = \hat\lambda(\hat\om)$
for all $\hat\om\in L_{*}^1(\hat \G)$ 
(see \cite {BS}, \cite {B-T} and \cite {kus}).


A linear map $m$ on $\LO$ is called a \emph{left centralizer} 
of $\LO$ if it satisfies
\[
 m(f\star g) = m(f)\star g
\]
for all $f, g \in \LO$.
We denote by $C_{cb}^{l}(L_{1}(\G))$ the space of all
completely bounded left   centralizers of  $\loneqg.$
We have the natural inclusions
\begin{equation}
\label{F.inclusion}
\LO \hookrightarrow M(\G) \hookrightarrow M_u(\G)\to C^{l}_{cb}(\LO).
\end{equation}
These algebras are typically not equal.
We have 
\begin{equation}
\label {F.equi}
M(\G) =  M_u(\G) = C^{l}_{cb}(L_{1}(\G))
\end{equation}
if and only if $\G$ is co-amenable, i.e., $\loneqg$  has a contractive  (or bounded)
approximate identity (cf. \cite {BS}, \cite {B-T}, and \cite {HNR}).
If $\G_{a}$ is the  commutative quantum group associated with a locally compact group $G$, then 
$\G_{a}$ is always co-amenable since 
$L_{1}(\G_{a})= L_{1}(G)$ has a contractive  approximate identity.
On the other hand, if $\G_{s}=\hat \G_{a}$ is the co-commutative 
dual quantum group of $\G_{a}$,  it is co-amenable, i.e., the Fourier algebra $A(G)$ 
has a contractive approximate identity, 
 if and only if the group $G$ is amenable.

The left regular representation $\lambda$ can be extended to $C^{l}_{cb}(\LO)$ such that 
$\langle \hat \om , \lambda(\om) \rangle = \langle \hat\lambda_u(\hat\om) , \om\rangle$
for all $\om\in M_u(\G)$ and $\hat\om \in L_*^1(\hat\G)$.

A normal completely bounded map $\Phi$ on $\LL$ is called \emph{covariant}
if it satisfies
\[
(\id\ot\Phi) \circ \Gamma = \Gamma \circ \Phi.
\]
We denote by ${\mathcal CB}^\sigma_{cov}(\LL)$ the algebra of all normal completely bounded
covariant maps on $\LL$.
It is easy to see that a normal completely bounded map $\Phi$ on $\LL$
is in ${\mathcal{CB}^\sigma_{cov}}(\LL)$ if and only if it is a 
left $L^1(\G)$-module map on $\LL$.
Therefore, a map $T$ is in $C_{cb}^l(L^1(\G))$ if and only if 
$T^*$ is in $\mathcal{CB}^\sigma_{cov}(\LL)$.

Let $X , Y\subseteq \B(H)$. We denote by $\mathcal {CB}^{\sigma, X}_Y(\B(H))$
the algebra of all
normal completely bounded $Y$-bimodule maps $\Phi$ on
$\B(H)$ that leave $X$ invariant.
It was proved in \cite{J-N-R} that for a locally compact quantum group $\G$,
every left centralizer $T\in C_{cb}^l(L^1(\G))$ has a unique
extension to a map $\Phi_T\in \mathcal {CB}^{\sigma, \LL}_\LLL(\B(\LT))$.
The next result, which can be seen as a quantum version of
 Wendel's theorem (\cite[Theorem 3]{Wen}),
is the starting point for our work. In fact, this
theorem will suggest how to define the objects that should be called
quantum point-masses.

In the sequel we shall denote by $Ad(u)$ the map $x\mapsto uxu^*$, for a unitary operator $u$.
\begin{theorem}\label{313}\cite[Theorem 4.7]{J-N-R}
Let $\G$ be a locally compact quantum group, and let $T$ be a complete
contraction in $C_{cb}^l(L^1(\G))$. Then the following are equivalent:
\begin{enumerate}
\item
$T$ is a completely isometric linear isomorphism on $L^1(\G)$;
\item
$T$ has a completely contractive inverse in $C_{cb}^l(L^1(\G))$;
\item
$\Phi_T$ is a $^*$-automorphism in $\mathcal {CB}^{\sigma, \LL}_\LLL(\B(\LT))$;
\item
there exist a unitary operator $\hat u \in \LL$ and a complex number $\lambda\in\field T$
such that $\Phi_T(x) = \lambda Ad(\hat u)(x)$.
If, in addition, $T$ is completely positive, then so is $T^{-1}$. In this case, we have
$\Phi_T = Ad(\hat u)$.
\end{enumerate}
\end{theorem}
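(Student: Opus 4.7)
The plan is to prove the cycle $(1) \Leftrightarrow (2) \Rightarrow (3) \Rightarrow (4) \Rightarrow (1)$, and then deduce the completely positive addendum at the end. The equivalence $(1) \Leftrightarrow (2)$ is essentially formal. If $T$ is a completely isometric bijection, then its inverse is automatically a complete isometry (in particular completely contractive); substituting $h = T^{-1}(f)$ into the centralizer identity $T(h \star g) = T(h)\star g$ yields $T^{-1}(f \star g) = T^{-1}(f) \star g$, so $T^{-1} \in C^l_{cb}(\LO)$. Conversely, a completely contractive map with completely contractive inverse is a complete isometry.

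For $(2) \Rightarrow (3)$, I would exploit the functoriality of the extension $T \mapsto \Phi_T$ recalled in the preamble: uniqueness of the extension makes this assignment a unital algebra homomorphism into $\mathcal{CB}^{\sigma,\LL}_\LLL(\B(\LT))$, so $\Phi_{T^{-1}} = \Phi_T^{-1}$ is again normal, completely contractive, and an $\LLL$-bimodule map leaving $\LL$ invariant. Consequently $\Phi_T$ is a surjective normal complete isometry of $\B(\LT)$. To upgrade to a $^*$-automorphism, I would first show that $T^*(1) = 1$ (exploiting that $T$ and $T^{-1}$ are both cc centralizers, which forces $T^*$ to preserve the unit of $\LL$), so $\Phi_T(1) = 1$, and then invoke the standard Banach--Stone--Kadison principle: a unital surjective complete isometry between unital $C^*$-algebras is a $^*$-isomorphism.

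The main work is $(3) \Rightarrow (4)$, which I expect to be the principal obstacle. Every normal $^*$-automorphism of $\B(\LT)$ is inner, so $\Phi_T = Ad(v)$ for some unitary $v \in \B(\LT)$. The $\LLL$-bimodule condition forces $v$ to commute with $\LLL$, and the $\LL$-invariance forces $v \LL v^* = \LL$. The task is then to pin $v$ down -- up to a scalar $\lambda \in \mathbb{T}$ -- to a unitary $\hat u \in \LL$ realized in the appropriate way inside $\B(\LT)$. The tools here are the covariance identity $(\id \otimes \Phi_T)\circ\Gamma = \Gamma\circ\Phi_T$ (coming from $T \in C^l_{cb}(\LO)$), applied via the expression $\Gamma(x) = W^*(1\otimes x)W$ from (\ref{F.com}), together with the pentagonal relation (\ref{F.pentagonal}) for $W \in \LL \bar\otimes \LLL$. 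These combine to give a cocycle-type relation on $v$ with respect to $W$, which can then be solved to identify $v$ with $\lambda \hat u$ for some unitary $\hat u \in \LL$ and $\lambda \in \mathbb{T}$. This structural identification is the quantum analogue of Wendel's theorem and is the substantive step.

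For $(4) \Rightarrow (1)$, one checks directly that $\Phi_T = \lambda \, Ad(\hat u)$ is a surjective complete isometry of $\B(\LT)$ restricting to a complete isometry of $\LL$, whose predual restriction is $T$ itself. Finally, if $T$ is additionally completely positive, then so is $\Phi_T$; but a nontrivial scalar twist $\lambda\, Ad(\hat u)$ is positive only when $\Phi_T(1) = \lambda \geq 0$, forcing $\lambda = 1$. Then $\Phi_T = Ad(\hat u)$ and $\Phi_T^{-1} = Ad(\hat u^*)$ is automatically completely positive, which predualizes back to the complete positivity of $T^{-1}$.
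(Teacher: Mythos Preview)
The paper does not contain a proof of this theorem: it is quoted verbatim from \cite[Theorem 4.7]{J-N-R} and used as the starting point for the constructions in Section~3. So there is no ``paper's own proof'' to compare your proposal against; any comparison would have to be with the original Junge--Neufang--Ruan argument, which lies outside the present paper.

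That said, one step in your sketch deserves scrutiny. In $(2)\Rightarrow(3)$ you assert that $T^*(1)=1$, arguing that being a cc centralizer with cc inverse forces unitality. Covariance gives $\Gamma(T^*(1))=(\iota\otimes T^*)\Gamma(1)=1\otimes T^*(1)$, which in a locally compact quantum group only pins down $T^*(1)=c\,1$ for some scalar $c$; complete contractivity of $T$ and $T^{-1}$ then yields $|c|=1$, not $c=1$. Thus $\Phi_T$ is a priori only a scalar multiple of a unital map, which is precisely what condition~(4) accommodates via the factor $\lambda\in\field T$. A genuine $^*$-automorphism of $\B(\LT)$ must be unital, so the equivalence $(3)\Leftrightarrow(4)$ as literally stated already hides this scalar ambiguity; your Kadison--Banach--Stone argument only delivers a Jordan $^*$-isomorphism up to the unimodular scalar, and one needs an additional observation (or a slightly relaxed reading of ``$^*$-automorphism'') to close the loop. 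The remainder of your outline --- innerness of normal automorphisms of $\B(\LT)$, bimodule constraints placing $v$ in $\LLL'$, and the covariance/pentagonal analysis to locate $\hat u$ --- is the expected route and matches the strategy of \cite{J-N-R}.
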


\section{Assigning a Group to a Quantum Group}
In view of Theorem \ref{313} and \cite[Theorem 3]{Wen}, we define the following assignment.
\begin{definition}
Let $\G$ be a locally compact quantum group. 
Define $\tilde\G$ to be the set of all completely positive maps
$m\in C_{cb}^l(L^1(\G))$ which satisfy one of the equivalent conditions of
Theorem \ref{313}. We endow $\tilde\G$ with the strong operator topology.
\end{definition}
\begin{example}\label{124365}
 As a consequence of \cite[Theorem 3]{Wen}, we see that
if $\G = L^\infty(G)$ for a locally compact group
$G$, then $\tilde\G$ is topologically isomorphic to $G$.
\end{example}
\begin{example} \cite[Theorem 2]{Ren} If $\G = VN(G)$ for an amenable 
locally compact group $G$, then
$\tilde\G$ is topologically isomorphic to $\hat G$, the set of all continuous characters
on $G$ with the compact-open topology.

As we shall see later, amenability is not necessary.
\end{example}

We thus obtain an assignment $\G \longrightarrow \tilde\G$, from the
category of locally compact quantum groups to the category of groups,
which is inverse to the usual embedding of the latter category into the former.
The main purpose of this paper is to investigate how much information about $\G$
one can get from studying $\tilde\G$, and also study the preservation of several
natural properties under this assignment

Note that in the classical case, for $m \in \tilde\G$, the adjoint map
$m^*:L^\infty(G)\rightarrow L^\infty(G)$ is just the left translation.
The next proposition shows that for a locally compact quantum group $\G$
and $m \in \tilde\G$, the adjoint map 
$m^*$ can be regarded as quantum left translation.
\begin{proposition}\label{315}  Let $m \in \tilde\G$ and $\fee$ be the left
Haar weight on $\G$. Then we have $\fee \circ m^* = \fee$.
\end{proposition}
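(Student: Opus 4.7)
The plan is to show that $\nu := \fee \circ m^*$ is itself a left-invariant normal, semifinite, faithful weight on $\LL$, and then invoke uniqueness of the left Haar weight. By Theorem \ref{313}, the complete positivity of $m$ yields $m^* = Ad(\hat u)$ for some unitary $\hat u \in \LLL$, so $m^*$ is a normal $^*$-automorphism of $\LL$. Moreover, since $m$ is a left centralizer, $m^* \in \mathcal{CB}^\sigma_{cov}(\LL)$, i.e., $m^*$ satisfies the covariance relation $(\id \otimes m^*) \circ \Gamma = \Gamma \circ m^*$.

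I would first verify that $\nu$ is an NSF weight: normality is immediate from the normality of $\fee$ and $m^*$; faithfulness follows from $m^*$ being a bijective $^*$-preserving map together with faithfulness of $\fee$; and semifiniteness follows from $m^*$ being a $\sigma$-weak homeomorphism mapping $\M_\nu^+$ bijectively onto $\M_\fee^+$. Using covariance and the left-invariance of $\fee$, one computes for $x \in \M_\nu^+$:
\begin{equation*}
(\id \otimes \nu)\Gamma(x) = (\id \otimes \fee)\bigl((\id \otimes m^*)\Gamma(x)\bigr) = (\id \otimes \fee)\Gamma(m^*(x)) = \fee(m^*(x)) \cdot 1 = \nu(x) \cdot 1,
\end{equation*}
so $\nu$ is left-invariant. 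By uniqueness of the left Haar weight, $\nu = c\,\fee$ for some $c > 0$.

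The main obstacle is to prove $c = 1$. The plan here is to exploit the fact that $m^*$ is implemented on $\LT$ by the unitary $\hat u \in \LLL \subset \B(\LT)$, and to connect this to the GNS embedding $\Lambda_\fee$. Concretely, the goal is to show that for $x \in \N_\fee$ one has $m^*(x) \in \N_\fee$ with $\|\Lambda_\fee(m^*(x))\| = \|\Lambda_\fee(x)\|$, which yields $\fee(m^*(x^*x)) = \fee(x^*x)$ on a linearly dense subset of $\M_\fee^+$ and forces $c = 1$. A natural route is to use the covariance of $m^*$ together with $\Gamma(x) = W^*(1 \otimes x)W$ to deduce that $\hat u$ is a group-like element of $\LLL$ (i.e., $\hat\Gamma(\hat u) = \hat u \otimes \hat u$), and then to combine the pentagonal relation with the modular data relating $\fee$ and $\hat\fee$ in (\ref{126}) to conclude that conjugation by such a $\hat u$ preserves $\fee$.
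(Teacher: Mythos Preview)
Your route via uniqueness of the Haar weight is more circuitous than needed, and the plan for showing $c=1$ has a genuine gap. First, the covariance relation you actually get by dualizing $m(f\star g)=m(f)\star g$ is $\Gamma \circ m^* = (m^* \otimes \id)\circ\Gamma$, with $m^*$ on the \emph{first} leg. With this in hand, the paper's argument is a one-line computation: for $x\in\M_\fee^+$,
\[
\fee(m^*(x))\,1 \;=\; (\id\otimes\fee)\Gamma(m^*(x)) \;=\; (\id\otimes\fee)(m^*\otimes\id)\Gamma(x) \;=\; m^*\bigl((\id\otimes\fee)\Gamma(x)\bigr) \;=\; m^*(\fee(x)1) \;=\; \fee(x)\,1,
\]
using left invariance of $\fee$ twice and unitality of $m^*$. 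No appeal to uniqueness, and hence no constant $c$ to pin down, is required.

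Your proposed attack on $c=1$ is where the real difficulty lies. You want to show that $\hat u$ is group-like and then feed this, together with the modular data, back into a statement that $Ad(\hat u)$ preserves $\fee$. But in the paper the logic runs the other way: Proposition~\ref{315} is precisely what allows one, in Theorem~\ref{317}, to define the unitary $\hat u_m$ on the GNS space by $\Lambda_\fee(x)\mapsto\Lambda_\fee(m^*(x))$ and then prove it lies in $Gr(\hat\G)$. Trying to extract $\|\Lambda_\fee(m^*(x))\|=\|\Lambda_\fee(x)\|$ from the bare fact that $m^*=Ad(\hat u)$ on $\B(\LT)$ with $\hat u\in\LLL$ does not work without already knowing how $\hat u$ intertwines with $\Lambda_\fee$, which is exactly what this proposition is meant to supply. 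As sketched, the argument is circular.
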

\begin{proof}  For $x\in \mathcal{M}_\fee^+$ we have 
$m^*(x)\in \mathcal{M}_\fee^+$ as well, and
\begin{eqnarray*}
 \fee \circ m^*(x)1 &=&  (\id\ot\fee)\Gamma(m^*(x)) = (\id\ot\fee)(m^*\ot\id)\Gamma(x)\\
&=& m^*(\id\ot\fee)\Gamma(x) =  m^*(\fee(x)1) = \fee(x)1.
\end{eqnarray*}
\end{proof}
\begin{proposition} \label{316} 
$\tilde\G$ is a topological group.
\end{proposition}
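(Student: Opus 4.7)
My plan is to verify, in order, that (a) $\tilde\G$ is closed under composition and inversion and contains the identity, (b) composition is jointly continuous in the strong operator topology, and (c) inversion is continuous. The crucial input throughout will be that every $m\in\tilde\G$ is by definition a completely isometric isomorphism of $L^1(\G)$ (condition (1) of Theorem \ref{313}), so in particular $\|m\|=\|m^{-1}\|=1$.

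For (a), the identity map on $L^1(\G)$ is trivially in $\tilde\G$. The class of completely positive, completely isometric left $L^1(\G)$-module isomorphisms whose inverse belongs to $C^l_{cb}(L^1(\G))$ is clearly stable under composition. For inversion, if $m\in\tilde\G$, then $m^{-1}$ lies in $C^l_{cb}(L^1(\G))$ and is completely contractive by condition (2) of Theorem \ref{313}, and it is completely positive by the final assertion of the same theorem; since it plainly still satisfies the equivalent conditions, $m^{-1}\in\tilde\G$.

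For (b), given nets $m_\alpha\to m$ and $n_\alpha\to n$ in $\tilde\G$ and $f\in L^1(\G)$, the standard decomposition
\[
m_\alpha n_\alpha(f)-mn(f)=m_\alpha\bigl(n_\alpha(f)-n(f)\bigr)+(m_\alpha-m)(n(f))
\]
combined with $\|m_\alpha\|\le 1$ shows that both summands vanish in norm. For (c), if $m_\alpha\to m$, the identity
\[
m_\alpha^{-1}(f)-m^{-1}(f)=m_\alpha^{-1}\bigl((m-m_\alpha)(m^{-1}(f))\bigr)
\]
together with $\|m_\alpha^{-1}\|=1$ yields $\|m_\alpha^{-1}(f)-m^{-1}(f)\|\le\|(m-m_\alpha)(m^{-1}(f))\|\to 0$. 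The usually delicate step, continuity of inversion in the strong operator topology (which typically fails for general operator groups), thus reduces here to a one-line computation, precisely because elements of $\tilde\G$ are isometries with isometric inverses; consequently no deeper analytic input is needed and the full topological group structure follows.
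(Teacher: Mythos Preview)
Your proof is correct and follows essentially the same approach as the paper's: both arguments hinge on the fact that elements of $\tilde\G$ are isometries with isometric inverses, which lets the usual SOT estimates go through. Your version is slightly more thorough---you verify the group axioms explicitly and check continuity at arbitrary points $(m,n)$ rather than only at the identity---but the underlying idea is identical.
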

\begin{proof}
Let $m_\alpha \rightarrow \iota$ and $n_\alpha\rightarrow \iota$,
where $(m_\alpha)$ and $(n_\alpha)$ are nets in $\tilde\G$. Then, for all $f\in\LO$, we have
\begin{eqnarray*}
\|(m_\alpha n_\alpha - \iota)(f)\| &\leq& \|(m_\alpha n_\alpha - m_\alpha)(f)\| + 
\|(m_\alpha - \iota)(f)\|\\
&\leq& \|m_\alpha\| \|(n_\alpha - \iota)(f)\| + \|(m_\alpha n_\alpha - \iota)(f)\| \ra 0.
\end{eqnarray*}
We also have
\begin{eqnarray*}
 \|m_\alpha^{-1} (f) - f\| &=& \|m_\alpha^{-1} (f) - m_\alpha^{-1}(m_\alpha (f))\| \\ &=& 
\|m_\alpha^{-1} (f - m_\alpha (f))\| \leq \|f - m_\alpha(f)\| \ra 0.
\end{eqnarray*}
\end{proof}

 Let $(M,\Gamma)$ be a Hopf-von Neumann algebra. 
 The intrinsic group of $(M,\Gamma)$ is defined as
$$Gr(M,\Gamma) := \{x\in M : \Gamma(x) = x\ot x \ \text{and $x$ is invertible} \}.$$
We endow this group with the induced weak$^*$ topology.
It can be easily seen (cf. \cite[Proposition 1.2.3]{E-S}) that
each $x\in Gr(M,\Gamma)$ is in fact a unitary.
For a locally compact quantum group $\G = (M,\Gamma,\fee,\si)$ we denote $Gr(M,\Gamma)$
simply by $Gr(\G)$. For $x\in Gr(\G)$, by \cite[Proposition 5.33]{K-V}, we have
$x\in {\mathcal D}(S)$, and $S(x) = x^*$.
This implies that $\tau_{-i}(x) = S^2(x) = x$,
and it follows that $\tau_t(x) = x$ for all $x\in Gr(\G)$ and $t\in\R$.
Hence, we obtain $R(x) = x^*$ for all $x \in Gr(\G)$.

Using the left fundamental unitary $W$, we can
define the map 
$$\tilde\Gamma: \B(\LT)\ni x\mapsto W^*(1\ot x)W\in\B(\LT)\vtp\B(\LT),$$
which extends the co-multiplication $\Gamma$ to $\B(\LT)$.

\begin{proposition}\label{319} We have
$Gr (\G) = Gr (\B(\LT),\tilde\Gamma)$.
\end{proposition}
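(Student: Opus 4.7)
The plan is to prove the two inclusions separately. The inclusion $Gr(\G) \subseteq Gr(\B(\LT),\tilde\Gamma)$ will be immediate from the fact that $\tilde\Gamma$ extends $\Gamma$: if $x\in Gr(\G)$, then $\tilde\Gamma(x)=\Gamma(x)=x\ot x$, and since such $x$ is unitary in $\LL$ it is in particular invertible in $\B(\LT)$.

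For the reverse inclusion, the key observation I would exploit is that, although $\tilde\Gamma$ is defined as a map $\B(\LT)\to \B(\LT)\vtp \B(\LT)$, its image is actually contained in the smaller corner $\LL\vtp \B(\LT)$. Indeed, $W\in \LL\vtp\LLL\subseteq \LL\vtp\B(\LT)$, and hence also $W^*\in\LL\vtp\B(\LT)$; since trivially $1\ot x\in \mathbb{C}1\vtp\B(\LT)\subseteq\LL\vtp\B(\LT)$, the product $\tilde\Gamma(x)=W^*(1\ot x)W$ stays in $\LL\vtp\B(\LT)$.

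With this in hand, suppose $x\in Gr(\B(\LT),\tilde\Gamma)$. Then $x\ot x=\tilde\Gamma(x)\in\LL\vtp\B(\LT)$. Since $x$ is invertible it is nonzero, so I can pick $\phi\in\B(\LT)_*$ with $\phi(x)\neq 0$. Applying the slice map $\id\ot\phi$, which sends $\LL\vtp\B(\LT)$ into $\LL$, I get
\[
\phi(x)\cdot x \;=\; (\id\ot\phi)(x\ot x)\;\in\;\LL,
\]
and therefore $x\in\LL$. Consequently $\tilde\Gamma(x)=\Gamma(x)$, so $\Gamma(x)=x\ot x$ and $x\in Gr(\G)$. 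The topological identification follows at once since both groups inherit the weak$^*$ topology from $\B(\LT)$ restricted to $\LL$.

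The single nontrivial step is the refinement of the range of $\tilde\Gamma$ to $\LL\vtp\B(\LT)$; once this is noticed, a one-line slice-map argument finishes the job, and I do not anticipate any serious obstacle.
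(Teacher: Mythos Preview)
Your proof is correct and follows exactly the paper's own argument: the paper also notes that $\tilde\Gamma(\B(\LT))\subseteq \LL\vtp\B(\LT)$ (since $W\in\LL\vtp\LLL$) and concludes immediately that $x\in\LL$. Your added slice-map justification for this last implication is a welcome bit of detail that the paper leaves implicit.
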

\begin{proof} Obviously $Gr (\G) \subseteq Gr (\B(\LT),\tilde\Gamma)$. 
If ${x} \in Gr(\B(\LT),\tilde\Gamma)$,
then ${\tilde\Gamma}({x}) = {x} \otimes {x}$; but since
${\tilde\Gamma}(\B(\LT)) \subseteq \LL\vtp\B(\LT)$, we have ${x} \in \LL$.
\end{proof}

The following theorem was proved by de Canni\`{e}re \cite[Definition 2.5]{De-Cann1}
for the case of Kac algebras.
\begin{theorem}\label{317}
Let $\G$ be a locally compact quantum group.
Then we have a group homeomorphism
$$\tilde\G \cong Gr(\hat\G)\,.$$
\end{theorem}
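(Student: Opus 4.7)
The plan is to construct an explicit bijection $\Psi:\tilde\G\to Gr(\hat\G)$ from Theorem~\ref{313}. For each $m\in\tilde\G$, Theorem~\ref{313}(4) supplies a unitary $\hat u\in\LLL$, determined up to a scalar in $\field{T}$, such that $\Phi_m=Ad(\hat u)$. The first main task will be to single out the unique representative of this $\field{T}$-coset satisfying $\hat\Gamma(\hat u)=\hat u\ot\hat u$; this will define $\Psi(m)$.

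To accomplish this, I would invoke the covariance identity $(\iota\ot\Phi_m)\tilde\Gamma=\tilde\Gamma\circ\Phi_m$, which encodes that $\Phi_m$ is a left $\LO$-module map. Substituting $\tilde\Gamma(x)=W^*(1\ot x)W$ and $\Phi_m=Ad(\hat u)$, a direct manipulation shows that $V:=W(1\ot\hat u)W^*$ satisfies $V(1\ot x)V^*=(1\ot\hat u)(1\ot x)(1\ot\hat u^*)$ for all $x\in\LL$; hence $(1\ot\hat u^*)V$ commutes with $\C\ot\LL$, placing it in $\B(\LT)\vtp\LL'$. Combined with $V\in\LL\vtp\LLL$ (since $W\in\LL\vtp\LLL$), this yields $(1\ot\hat u^*)V\in\LL\vtp(\LLL\cap\LL')=\LL\ot\C$ via the standard intersection property $\LLL\cap\LL'=\C$, and therefore $W(1\ot\hat u)W^*=v\ot\hat u$ for a unique unitary $v\in\LL$. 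Translating via $\hat W=\Sigma W^*\Sigma$ into the formula $\hat\Gamma(\hat u)=\Sigma W(\hat u\ot 1)W^*\Sigma$, and using coassociativity $(\iota\ot\hat\Gamma)\hat\Gamma=(\hat\Gamma\ot\iota)\hat\Gamma$ of the dual coproduct, the opposite tensor factor is forced to be group-like; the $\field{T}$-ambiguity in $\hat u$ then admits exactly one rescaling under which $\hat\Gamma(\hat u)=\hat u\ot\hat u$. This normalization step is the principal obstacle of the proof, since it requires the scalar freedom in $\hat u$ to simultaneously reconcile both tensor factorizations into the group-like identity.

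Conversely, starting from $\hat u\in Gr(\hat\G)$, the group-like identity yields directly, via a slicing calculation, that $Ad(\hat u)\,\hat\lambda(\hat\omega)=\hat\lambda(\hat\omega\cdot\hat u^*)$ for every $\hat\omega\in L^1(\hat\G)$, so $Ad(\hat u)$ maps $\hat\lambda(L^1(\hat\G))$ into itself; by density in $\CZ$ and normality, $Ad(\hat u)$ preserves $\LL$, giving a normal $*$-automorphism whose pre-adjoint lies in $\tilde\G$ by Theorem~\ref{313}. The group homomorphism property follows from $Ad(\hat u_1)Ad(\hat u_2)=Ad(\hat u_1\hat u_2)$ together with the closure of $Gr(\hat\G)$ under multiplication. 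For the homeomorphism, strong operator convergence $m_\alpha\to m$ in $\tilde\G$ translates, via the extension $m\mapsto\Phi_m$ and the identification $\Phi_m=Ad(\hat u)$, to weak$^*$ convergence $\hat u_{m_\alpha}\to\hat u_m$ in $\LLL$, which is precisely the topology defining $Gr(\hat\G)$.
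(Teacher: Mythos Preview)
Your approach diverges from the paper's and leaves a real gap at the normalization step. The paper does \emph{not} start from the $\field{T}$-coset provided by Theorem~\ref{313}(4); instead it exploits Proposition~\ref{315} ($\varphi\circ m^*=\varphi$) to define a \emph{specific} unitary via the GNS map, $\hat u_m\Lambda_\varphi(x):=\Lambda_\varphi(m^*(x))$, and then a direct computation with the vector formula $W^*(\Lambda_\varphi(x)\ot\Lambda_\varphi(y))=(\Lambda_\varphi\ot\Lambda_\varphi)(\Gamma(y)(x\ot1))$, using both multiplicativity of $m^*$ and the first-leg covariance $(m^*\ot\iota)\Gamma=\Gamma m^*$, yields $W(\hat u_m\ot1)W^*=\hat u_m\ot\hat u_m$ at once---no scalar ever needs fixing. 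The same concrete description also makes the homeomorphism transparent via the identity $m(\omega_{\Lambda_\varphi(x)})=\omega_{\hat u_m^*\Lambda_\varphi(x)}$, which your sketch does not supply.

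In your argument, two steps are unjustified. First, for a \emph{left} centralizer $m$ the adjoint satisfies $(m^*\ot\iota)\Gamma=\Gamma m^*$ (this is what the paper actually uses in Proposition~\ref{315} and here), not the second-leg identity you invoke; consequently the object controlled by covariance is $W(\hat u\ot1)W^*=\Sigma\,\hat\Gamma(\hat u)\,\Sigma$, whereas your manipulation produces $W(1\ot\hat u)W^*=v\ot\hat u$ with $v\in\LL$, which does not feed into $\hat\Gamma(\hat u)$, and the ``translation'' you mention is never carried out. Second, even granting the correct relation $\hat\Gamma(\hat u)=\hat u\ot w$, coassociativity only shows that $w$ is group-like; it does \emph{not} give $w\in\field{T}\hat u$, which is precisely what is required for some rescaling of $\hat u$ to be group-like (note that replacing $\hat u$ by $\lambda\hat u$ still yields $\hat\Gamma(\lambda\hat u)=(\lambda\hat u)\ot w$ with the \emph{same} $w$). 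One can close this gap by computing $Ad(\hat u)$ on the generators $(\iota\ot\omega)(W^*)$ and using $\LL\cap\LLL=\C$ to force $w\hat u^*\in\C$, but that argument is missing from your proposal---and is exactly what the paper's GNS construction renders unnecessary.
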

\begin{proof} Let $m\in\tilde\G$. Since $\fee \circ m^* = \fee$ (by Proposition \ref{315}),
we can extend the map
$$\Lambda_{\fee}(x) \mapsto \Lambda_{\fee}(m^*(x)) \ \ (x\in \N_{\fee})$$
to a unitary $\hat u_m$ on $L^2(\G)$, such that $m^* = Ad(\hat u_m)$,
where $Ad(\hat u_m)(x) = u_m x u_m^*$, $x\in\B(\LT)$.
We show that $\hat u_m \in Gr(\hat\G)$. For all $x,y\in \N_{\fee}$ we have
\begin{eqnarray*}
&&(\hat u_m\ot 1) W^* (\Lambda_{\fee}(x)\otimes\Lambda_{\fee}(y)) = 
(\hat u_m\ot 1)\Lambda_{\fee\otimes\fee}(\Gamma(y)(x \otimes 1))\\
&=& \Lambda_{\fee\otimes\fee}((m^*\ot\id)(\Gamma(y)(x\otimes 1))) = 
 \Lambda_{\fee\otimes\fee} (\Gamma(m^*(y)) (m^*(x)\ot 1))\\
&=& W^* (\Lambda_{\fee}(m^*(x)) \otimes \Lambda_{\fee} (m^*(y))) 
= W^* (\hat u_m\otimes \hat u_m) (\Lambda_{\fee}(x) \otimes \Lambda_{\fee}(y)). 
\end{eqnarray*}
Hence, we obtain $W (\hat u_m\ot 1) W^* = \hat u_m\otimes \hat u_m$ which implies
\begin{eqnarray*}
\hat W^* (1\ot\hat u_m) \hat W &=& \chi (W)  (1\ot\hat u_m) \chi (W^*)
= \chi (W (\hat u_m\ot 1) W^*) \\
&=& \chi(\hat u_m\otimes \hat u_m) 
= \hat u_m\otimes \hat u_m.
\end{eqnarray*}
Thus $\hat u_m\in Gr (\B(\LT),\hat\Gamma)$, and so $\hat u_m\in Gr(\hat\G)$ 
by Proposition \ref{319}.

Now define $\Psi: \tilde\G\ra Gr(\hat\G)$, $\Psi(m) = \hat u_m$.
It is easily seen that $\Psi$ is a well-defined group homomorphism.
If $\Psi(m) = 1$, we have $m^* = Ad 1 = \iota$, which implies $m=\iota$.
 Hence $\Psi$ is injective.

To see that $\Psi$ is also surjective, let $\hat u\in Gr(\hat\G)$. 
Then the above calculations
show that for all $\omega\in \B(\LT)_*$, we have
\begin{eqnarray*}
 Ad(\hat u)((\id\ot\omega)W^*) &=& (\id\ot\omega)((\hat u\ot 1)W^*(\hat u^*\ot 1))
= (\id\ot\omega)(W^*(\hat u\ot\hat u)(\hat u^*\ot 1))\\
&=& (\id\ot\omega)(W^*(1\ot\hat u))
= (\id\ot(\hat u\cdot\om))W^*.
\end{eqnarray*}
Since $\{(\id\ot\omega)W^* : \omega\in \B(\LT)_*\}$ 
is weak$^*$ dense in $\LL$, we see that
$$Ad(\hat u)(\LL)\subseteq\LL,$$
whence $Ad(\hat u)\in \mathcal{CB}^{\sigma,\LL}_{\LLL'}(\B(\LT))$.
Hence, by Theorem \ref{313}, there exists
$m\in\tilde\G$ such that $m^* = Ad(\hat u)$, which implies that $\Psi(m) = \hat u$.

We now show that $\Psi$ is a homeomorphism with respect to the corresponding topologies.
For all $x\in\N_\fee$, $y\in\LL$ and $m\in\tilde\G$, and $m^* = Ad(\hat u_m)$, we have
\begin{eqnarray*}
\langle m(\omega_{\Lambda_\fee(x)}), y\rangle &=& \la \omega_{\Lambda_\fee(x)}, m^*(y)\rangle
\langle \omega_{\Lambda_\fee(x)}, \hat u_m y\hat u^*_m\rangle =
\langle \hat u_m y \hat u^*_m\Lambda_\fee(x),\Lambda_\fee(x)\rangle\\ &=&
\langle y \hat u^*_m\Lambda_\fee(x),\hat u^*_m\Lambda_\fee(x)\rangle =
\langle \omega_{\hat u_m^*\Lambda_\fee(x)} , y\rangle,
\end{eqnarray*}
which implies $m(\omega_{\Lambda_\fee(x)}) = \omega_{\hat u^*_m\Lambda_\fee(x)}$.
Now, let $(m_\alpha)$ be a net in $\G$ such that $m_\alpha\ra1$, 
with $m_\alpha^* = Ad(\hat u_\alpha)$.
Then the density of $\Lambda_\fee(\N_\fee)$ in $\LT$ yields
\begin{eqnarray*}
m_\alpha\ra 1 &\Leftrightarrow& \|m_\alpha(\omega_{\Lambda_\fee(x)}) -
 \omega_{\Lambda_\fee(x)}\|\ra 0\ \  \forall x\in\N_\fee \\
&\Leftrightarrow& \|\omega_{\hat u_\alpha^*\Lambda_\fee(x)} - 
\omega_{\Lambda_\fee(x)}\|\ra 0\ \ \forall x\in\N_\fee\\
&\Leftrightarrow& \|\hat u_\alpha^*\Lambda_\fee(x) - 
\Lambda_\fee(x)\|\ra 0\ \ \forall x\in\N_\fee\\
&\Leftrightarrow& \hat u_\alpha^*\stackrel{sot}\longrightarrow 1 
\Leftrightarrow \hat u_\alpha\stackrel{sot}\longrightarrow 1
\Leftrightarrow \hat u_\alpha\stackrel{w^*}\longrightarrow 1.
\end{eqnarray*}
\end{proof}

Our next result is a generalization of the Heisenberg commutation relation,
which has been known to hold for Kac algebras \cite[Corollary 4.6.6]{E-S}.
\begin{theorem}\label{317.5}
Let $\G$ be a locally compact quantum group.
 Let $u\in Gr(\G)$ and $\hat u\in Gr(\hat\G)$. Then there exists
$\lambda\in\field T$ such that
\[
 u\hat u = \lambda\hat u u.
\]
\end{theorem}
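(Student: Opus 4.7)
The plan is to translate the two group-like hypotheses into commutation relations for the left fundamental unitary $W$, combine them to write the multiplicative commutator $z := u\hat u u^{*}\hat u^{*}$ as a $W$-conjugate of $z\ot 1$, and then argue that $z$ must be a scalar; since $z$ is unitary, the scalar automatically lies in $\field T$.

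First I would record the two relations
\[
(1\ot u)W = W(u\ot u) \quad\text{and}\quad W(\hat u\ot 1) = (\hat u\ot\hat u)W.
\]
The first is $\Gamma(u)=u\ot u$ rewritten via $\Gamma(\cdot)=W^{*}(1\ot\,\cdot\,)W$, and the second is exactly what was established from $\hat{\Gamma}(\hat u)=\hat u\ot\hat u$ inside the proof of Theorem \ref{317}. Using the factorizations $u\hat u\ot u=(u\ot u)(\hat u\ot 1)$ and $\hat u u\ot u=(\hat u\ot 1)(u\ot u)$, these combine to give
\[
W(u\hat u\ot u)W^{*} = \hat u\ot u\hat u,\qquad W(\hat u u\ot u)W^{*} = \hat u\ot\hat u u,
\]
and since $z\ot 1 = (u\hat u\ot u)(\hat u u\ot u)^{-1}$, dividing one relation by the other yields
\[
W(z\ot 1)W^{*} = (\hat u\ot u\hat u)(\hat u\ot\hat u u)^{-1} = 1\ot z.
\]

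Next, to see that $z\in\LL$, observe that $W\in\LL\vtp\LLL$ commutes with $y\ot 1$ for every $y\in\LL'$, while $y\ot 1$ trivially commutes with $1\ot z$; conjugating by $W$ therefore forces $W(yz\ot 1)W^{*}=W(zy\ot 1)W^{*}$, hence $yz=zy$, so $z\in\LL''=\LL$. The identity $W(z\ot 1)W^{*}=1\ot z$ then rewrites as $\Gamma(z)=z\ot 1$.

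Finally I would conclude $z\in\field C\cdot 1$ by exploiting the left Haar weight. Pick $y\in\mathcal{M}_\fee^{+}$ with $\fee(y)=1$ (possible by semifiniteness); since $\mathcal{M}_\fee$ is a two-sided ideal, $zy\in\mathcal{M}_\fee$, and combining $\Gamma(zy)=(z\ot 1)\Gamma(y)$ with left invariance and the left $\LL$-linearity of the slice $(\iota\ot\fee)$ gives
\[
\fee(zy)\cdot 1 = (\iota\ot\fee)\Gamma(zy) = z\cdot(\iota\ot\fee)\Gamma(y) = \fee(y)\,z = z.
\]
Hence $z=\fee(zy)\cdot 1\in\field C\cdot 1$, and unitarity pins this down to some $\lambda\in\field T$, proving $u\hat u=\lambda\hat u u$. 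I expect the main obstacle to be this last step: one is tempted to invoke a counit to extract the scalar from $\Gamma(z)=z\ot 1$, but the counit is not in general defined on $\LL$, and routing through the left Haar weight is what makes the argument go through uniformly for all locally compact quantum groups.
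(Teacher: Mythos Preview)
Your first two steps are correct and elegant: the manipulation of $W$ leading to $W(z\ot 1)W^{*}=1\ot z$ is fine, and the commutant argument giving $z\in\LL$ and hence $\Gamma(z)=z\ot 1$ is valid. The approach is genuinely different from the paper's, which never passes through the relation $\Gamma(z)=z\ot 1$ at all: the paper observes, via Theorems~\ref{313}, \ref{317} and \cite[Theorem 5.1, Corollary 5.3]{J-N-R}, that $Ad(u)$ and $Ad(\hat u)$ lie in the two mutually commuting representation algebras on $\B(\LT)$, whence $Ad(z)=Ad(u)Ad(\hat u)Ad(u)^{-1}Ad(\hat u)^{-1}=\iota$ on all of $\B(\LT)$, forcing $z\in\C 1$ immediately.

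The gap is in your last step. The assertion that ``$\mathcal{M}_\fee$ is a two-sided ideal'' is false for a general n.s.f.\ weight; $\mathcal{N}_\fee$ is a left ideal and $\mathcal{M}_\fee=\mathcal{N}_\fee^{*}\mathcal{N}_\fee$ is a hereditary $*$-subalgebra, but for non-tracial $\fee$ it is \emph{not} stable under left multiplication by arbitrary elements of $\LL$. Consequently you cannot conclude $zy\in\mathcal{M}_\fee$, and the invocation of left invariance in the form $(\iota\ot\fee)\Gamma(zy)=\fee(zy)\,1$ is unjustified. What \emph{is} correct, using the $\LL\ot 1$-bimodule property of the operator-valued weight $\id\ot\fee$, is that $(\id\ot\fee)\bigl((z\ot 1)\Gamma(y)\bigr)=z\,\fee(y)$; but this alone does not force $z$ to be scalar, because you have no independent computation of the left-hand side as a multiple of $1$. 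The implication $\Gamma(z)=z\ot 1\Rightarrow z\in\C 1$ \emph{is} true for locally compact quantum groups, but it requires a different argument (for instance via the density of $\Gamma(\LL)(\LL\ot 1)$ in $\LL\vtp\LL$, or by first reducing to $z\ge 0$ through spectral calculus and then using hereditarity). As written, the proof does not close.
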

\begin{proof}
We obtain from Theorems \ref{313}, \ref{317} and \cite[Theorem 5.1 and Corollary 5.3]{J-N-R}
that $Ad(u)$ and $Ad(\hat u)$ commute as operators on $\B(\LT)$. Therefore we have
\[
 Ad(u)Ad(\hat u) = Ad(\hat u)Ad(u)\Rightarrow Ad(u\hat uu^*\hat u^*) = \iota
\Rightarrow u\hat u u^* \hat u^*\in\C1,
\]
which yields the conclusion.
\end{proof}

Next theorem is as well a generalization of a 
result known in the Kac algebra case \cite[Theorem 3.6.10]{E-S}.
The latter proof uses boundedness of the antipode (which does not hold in the
case of general locally compact quantum groups) in an essential way.
In \cite[Theorem 3.2.11]{thesis} we proved this result for
the general case of locally compact quantum groups.
Here we present a new proof which is also shorter.

For a Banach algebra $\A$ we denote by $sp(\A)$ its spectrum, i.e,
the set of all non-zero bounded multiplicative linear functionals on $\A$.
\begin{theorem}\label{318}
Let $\G$ be a locally compact quantum group. Then we have 
$$Gr (\G) = sp(L^1({\G})).$$
\end{theorem}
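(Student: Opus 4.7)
The plan is to establish both inclusions of the claimed equality.

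The direction $Gr(\G) \subseteq sp(L^1(\G))$ is straightforward. For $u \in Gr(\G)$, $u$ is a unitary, hence non-zero, so $\omega_u \in L^1(\G)^* = L^\infty(\G)$ is a non-zero functional; and for $f, g \in L^1(\G)$,
\[
\omega_u(f \star g) = (f \otimes g)\Gamma(u) = (f \otimes g)(u \otimes u) = \omega_u(f)\omega_u(g),
\]
so $\omega_u$ is multiplicative and thus lies in $sp(L^1(\G))$.

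For the reverse inclusion, let $\varphi \in sp(L^1(\G))$ be non-zero. By duality, $\varphi = \omega_x$ for a unique $x \in L^\infty(\G) \setminus \{0\}$, and multiplicativity rewrites as $(f \otimes g)\Gamma(x) = (f \otimes g)(x \otimes x)$ for all $f, g \in L^1(\G)$. Since elementary tensors are norm-dense in $L^1(\G) \tp L^1(\G) = (L^\infty(\G) \vtp L^\infty(\G))_*$, we deduce $\Gamma(x) = x \otimes x$. As $\Gamma$ is a normal $^*$-homomorphism, hence isometric, one has $\|x\| = \|\Gamma(x)\| = \|x\otimes x\| = \|x\|^2$, forcing $\|x\| = 1$.

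The crux is to show $x$ is invertible; unitarity then follows from \cite[Proposition 1.2.3]{E-S}, and consequently $x \in Gr(\G)$. My plan is to leverage Theorems~\ref{313} and~\ref{317} in order to bypass the antipode, whose unboundedness is precisely why the argument of \cite[Theorem 3.6.10]{E-S} for Kac algebras does not carry over. Concretely, I would construct from $x$ an element $m \in \tilde{\hat\G}$, and then apply the dual form of Theorem~\ref{317}, namely $\tilde{\hat\G} \cong Gr(\hat{\hat\G}) = Gr(\G)$, to produce a unitary $u \in Gr(\G)$ coinciding with $x$. The construction relies on the identity $(1 \otimes x)W = W(x \otimes x)$ (equivalent to $\Gamma(x) = x \otimes x$) together with its adjoint $(1 \otimes x^*)W = W(x^* \otimes x^*)$: these should imply that the assignments $\hat y \mapsto x \hat y x^*$ and $\hat y \mapsto x^* \hat y x$ preserve $L^\infty(\hat\G) \subseteq \B(L^2(\G))$ and yield normal completely bounded $^*$-endomorphisms which are mutually inverse, so that by Theorem~\ref{313} their pre-adjoints on $L^1(\hat\G)$ constitute an element of $\tilde{\hat\G}$.

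The principal obstacle is obtaining invertibility of $x$ without recourse to a bounded antipode. The proposed strategy replaces the antipode by the representation-theoretic framework of Theorem~\ref{313}, reducing the unitarity of $x$ to the inversion of $Ad(x)$ by $Ad(x^*)$, for which the group-like hypothesis is exactly what is required. This also accounts for the proof being \emph{shorter} than \cite{E-S}: all the technical work has already been carried out in establishing Theorems~\ref{313} and~\ref{317}.
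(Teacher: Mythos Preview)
Your easy inclusion and the reduction of the hard inclusion to the invertibility of $x$ are both fine. The gap is in the step where you claim that the relations $(1\ot x)W=W(x\ot x)$ and $(1\ot x^*)W=W(x^*\ot x^*)$ force $\hat y\mapsto x\hat y x^*$ and $\hat y\mapsto x^*\hat y x$ to preserve $\LLL$ and to be mutually inverse there. A direct computation shows this is circular. For $f\in\LO$ one has
\[
x\,\lambda(f)\,x^* \;=\; (f\ot\iota)\big((1\ot x)W(1\ot x^*)\big)
\;=\; (f\ot\iota)\big(W(x\ot xx^*)\big)
\;=\; \lambda(x\cdot f)\,xx^*,
\]
where $(x\cdot f)(a)=f(ax)$. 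The factor $xx^*\in\LL$ does not lie in $\LLL$ in general, so $Ad(x)$ need not preserve $\LLL$; and even if it did, the composite $Ad(x^*)\circ Ad(x)=Ad(x^*x)$ is the identity only when $x^*x=1$, which is precisely the unitarity you are trying to establish. Thus the hypotheses of Theorem~\ref{313} cannot be verified without already knowing that $x$ is (co-)isometric, and the appeal to Theorem~\ref{317} for $\hat\G$ never gets off the ground.

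The paper's argument avoids this circularity by a completely different route: it first treats the positive case $x\in sp(\LO)\cap\LL^+$, observing that then $x^{is}\in Gr(\G)$ for all $s$, hence $\tau_t(x)=x$, and via the relations~(\ref{1.6}) that $\sigma_t^\fee(x)=x$ for all $t$. Takesaki's theorem then makes $x$ a multiplier of $\M_\fee$, and a short computation with left invariance of $\fee$ produces a one-sided inverse. The general case follows since $xx^*,x^*x\in sp(\LO)\cap\LL^+$. This is where the actual work is done, and it genuinely replaces the bounded-antipode argument of \cite{E-S} rather than repackaging it through Theorems~\ref{313} and~\ref{317}.
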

\begin{proof} Let $x\in Gr(\G)$. Then ${\Gamma}({x}) = {x} \otimes {x}$ and $x\neq 0$,
and for $\omega,\omega'\in\LO$ we have
\[
 \la \omega\star\omega', x\rangle = \la \omega\ot\omega', \Gamma(x)\rangle = 
\la \omega\ot\omega', x\ot x\rangle = \la \omega, x\rangle \la \omega', x\rangle,
\]
which implies $x\in sp(L^1({\G}))$. Hence, $Gr(\G) \subseteq sp(L^1({\G}))$.

To show the inverse inclusion, first let $x\in sp(\LO)\cap\LL^+$. 
Then $x^{is}\in Gr(\G)$ for all $s\in\R$,
and so $\tau_t(x^{is})= x^{is}$ for all $s,t\in\R$. Therefore, by the equations (\ref{1.6}),
there exists $c\in\C$ such that 
$$\sigma_t^\fee(x)^{is} = \sigma_t^\fee(x^{is}) = c^sx^{is} = (c^{-i}x)^{is}$$ 
for all $s,t\in\R$,
which implies $\sigma_t^\fee(x) = c^{-i}x$ for all $t\in\R$. 
Since $x\geq 0$ and $\sigma_t^\fee$ in a
$^*$-automorphism, we have $\sigma_t^\fee(x) = x$ for all $t\in\R$.
This yields, by \cite[Theorem 2.6]{tak2}, that $x$ is a multiplier of $\M_\fee$, and
we have $\fee(ax) = \fee(xa)$ for all $a\in\M_\fee$. 

Since $\fee$ is n.s.f.,
there exists $a\in\M_\fee$ such that $\fee(ax) = 1$, and we then have
\[
 1 = \fee(ax)1 = (\id\ot\fee)\Gamma(ax) = (\id\ot\fee)(\Gamma(a)(x\ot x)) = 
 \big((\id\ot\fee)(\Gamma(a)(1\ot x))\big)x.
\]
Hence, $x$ has a left inverse. 
Similarly we can show that $x$ has also a right inverse, and therefore
$x$ is invertible.

Now, let $x\in sp(\LO)$. Then by the above we can conclude that both $xx^*$ and $x^*x$ are
invertible, and hence, $x$ is invertible.
\end{proof}
If $\tilde\Gamma: \B(\LT)\ra\B(\LT)\ot\B(\LT)$ is the extension of $\Gamma$ via
the fundamental unitary $W$, then $\tilde\Gamma_*$ defines a product on $\B(\LT)_*$, which
turns the latter to a completely contractive Banach algebra. We denote this
Banach algebra by $\T_\star({\G})$.
\begin{corollary}\label{320}
We have
 $Gr(\B(\LT),{\tilde\Gamma}) =  sp(\T_\star({\G}))$.
\end{corollary}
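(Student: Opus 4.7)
The plan is to deduce this from Theorem \ref{318} and Proposition \ref{319} by a dualization argument, noting that $\T_\star(\G)$ stands in exactly the same relation to $\tilde\Gamma$ on $\B(\LT)$ as $L^1(\G)$ does to $\Gamma$ on $\LL$.

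The forward inclusion $Gr(\B(\LT),\tilde\Gamma) \subseteq sp(\T_\star(\G))$ is formal: for $x \in Gr(\B(\LT),\tilde\Gamma)$, invertibility forces $x \neq 0$, and dualizing $\tilde\Gamma(x) = x \otimes x$ gives
\[
\langle \omega \star \omega', x\rangle = \langle \omega \otimes \omega', \tilde\Gamma(x)\rangle = \langle \omega, x\rangle \langle \omega', x\rangle
\]
for all $\omega, \omega' \in \B(\LT)_* = \T_\star(\G)$, so $x$ is a non-zero multiplicative functional.

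For the reverse inclusion, I would run the same computation backwards to conclude that any $x \in sp(\T_\star(\G))$ satisfies $\tilde\Gamma(x) = x \otimes x$ with $x \neq 0$. The key step is then to push $x$ into $\LL$: since $\tilde\Gamma$ takes values in $\LL \vtp \B(\LT)$, picking $\omega \in \B(\LT)_*$ with $\omega(x) \neq 0$ and applying $\id \otimes \omega$ to $\tilde\Gamma(x) = x \otimes x$ yields $\omega(x)\, x \in \LL$, whence $x \in \LL$. The restriction $x|_{\LO}$ is then a non-zero multiplicative functional on $\LO$ (non-zero because $\LO$ separates the points of $\LL$), so Theorem \ref{318} places $x$ in $Gr(\G)$, which by Proposition \ref{319} coincides with $Gr(\B(\LT),\tilde\Gamma)$.

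The only subtlety I anticipate is that the definition of $Gr(\B(\LT),\tilde\Gamma)$ demands invertibility in $\B(\LT)$, whereas Theorem \ref{318} produces invertibility inside $\LL$; but any invertible element of the von Neumann subalgebra $\LL \subseteq \B(\LT)$ automatically remains invertible in $\B(\LT)$, so this requires no additional argument.
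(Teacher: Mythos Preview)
Your proof is correct and follows essentially the same route as the paper's own argument: both establish the forward inclusion formally, then for the reverse inclusion use the fact that $\tilde\Gamma(\B(\LT))\subseteq \LL\vtp\B(\LT)$ to force $x\in\LL$, and finally invoke Theorem \ref{318} together with Proposition \ref{319}. Your write-up is simply more explicit about the slice-map step and about why the restriction of $x$ to $\LO$ remains non-zero, points the paper leaves to the reader.
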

\begin{proof}
The inclusion $Gr(\B(\LT),{\Gamma})\subseteq sp(\T_\star({\G}))$ is obvious. 
To see the converse, let $x\in sp(\T_\star(\G))$, i.e., $\Gamma(x) = x\ot x$ and $x\neq 0$.
Then we have $x\in\LL$, and so, by Theorem \ref{318}, $x\in Gr(\G)$, which is
equal to $Gr(\B(\LT),\Gamma)$ by Proposition \ref{319}.
\end{proof}
\begin{theorem}\label{321}
Let $\G$ be a locally compact quantum group. Then we have a group homeomorphism
\[
 sp(L^1(\hat\G)) \cong sp(\CU).
\]
\end{theorem}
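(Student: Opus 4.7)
The plan is to reduce the statement to the identification $Gr(\hat\G)\cong sp(\CU)$: applying Theorem \ref{318} to $\hat\G$ gives $sp(L^1(\hat\G)) = Gr(\hat\G)$, with weak*-topology and group law inherited from $L^\infty(\hat\G)$, so it suffices to exhibit mutually inverse group homeomorphisms between $Gr(\hat\G)$ and $sp(\CU)$ (the latter being a group under the convolution $\star_u$ on $M_u(\G) = \CU^*$).

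For the map $\Psi : Gr(\hat\G) \to sp(\CU)$, I would start with $\hat u \in Gr(\hat\G)$ and consider the functional $\chi_{\hat u}(\hat\om) := \langle \hat\om, \hat u \rangle$ on $L^1(\hat\G)$, which is a character because $\hat\Gamma(\hat u) = \hat u \otimes \hat u$. The key step is to show that its restriction to the Banach *-algebra $L^1_*(\hat\G)$ is a *-character. Combining $\hat S(\hat u) = \hat u^*$ (noted just before Proposition \ref{319}, via \cite[Proposition 5.33]{K-V} applied to $\hat\G$) with the identity $\hat\om^* = \overline{\hat\om}\circ\hat S$ valid on $\mathcal{D}(\hat S)$---which itself follows from $(\iota\otimes\hat S)(W) = W^*$ together with the definition of the involution on $L^1_*(\hat\G)$---one computes $\chi_{\hat u}(\hat\om^*) = \overline{\hat\om(\hat S(\hat u)^*)} = \overline{\hat\om(\hat u)} = \overline{\chi_{\hat u}(\hat\om)}$. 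The universal property of $\CU$ as the enveloping C*-algebra of $L^1_*(\hat\G)$ then extends this one-dimensional *-representation uniquely to a character $\Psi(\hat u) \in sp(\CU)$.

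For the inverse $\Phi : sp(\CU) \to Gr(\hat\G)$, given $\tilde\chi \in sp(\CU) \subseteq \CU^* = M_u(\G)$, I set $\hat u_{\tilde\chi} := \lambda(\tilde\chi) \in L^\infty(\hat\G)$, using the extension of the left regular representation to $M_u(\G)$ recalled just before Theorem \ref{313}. The duality formula $\langle \hat\lambda_u(\hat\om), \tilde\chi\rangle = \langle \hat\om, \lambda(\tilde\chi)\rangle$ from the preliminaries shows that the functional $\hat\om\mapsto\langle\hat\om, \hat u_{\tilde\chi}\rangle$ agrees with the multiplicative functional $\tilde\chi\circ\hat\lambda_u$ on the dense subspace $L^1_*(\hat\G)\subseteq L^1(\hat\G)$, and hence is multiplicative on all of $L^1(\hat\G)$ by continuity of convolution; it is non-zero because $\tilde\chi\neq 0$. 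Therefore $\hat u_{\tilde\chi} \in sp(L^1(\hat\G)) = Gr(\hat\G)$ by Theorem \ref{318}, and in particular is automatically unitary.

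The two maps are mutually inverse by the same duality. The group-homomorphism property on either side follows from $\hat\Gamma(\hat u\hat v) = \hat u\hat v \otimes \hat u\hat v$ together with $\lambda$ sending $\star_u$ to operator multiplication in $L^\infty(\hat\G)$. The homeomorphism claim reduces to noting that both spaces carry inherited weak*-topologies (from $L^\infty(\hat\G)$ and $\CU^*$ respectively) and that the duality formula $\langle \hat\lambda_u(\hat\om), \tilde\chi\rangle = \langle \hat\om, \lambda(\tilde\chi)\rangle$, together with norm-density of $\hat\lambda_u(L^1_*(\hat\G))$ in $\CU$ and of $L^1_*(\hat\G)$ in $L^1(\hat\G)$, translates weak*-convergence on one side into the other. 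The most delicate step I expect is the *-character verification for $\chi_{\hat u}$: conceptually it is just the identity $\hat S(\hat u) = \hat u^*$, but cleanly handling the implicit definition of the involution on $L^1_*(\hat\G)$ together with the unboundedness of $\hat S$ takes some care.
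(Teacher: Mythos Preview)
Your proposal is correct and follows essentially the same route as the paper: both directions are built from the duality $\langle \hat\lambda_u(\hat\om),\phi\rangle = \langle\hat\om,\lambda(\phi)\rangle$, the $*$-character verification rests on $\hat S(\hat u)=\hat u^*$ for $\hat u\in Gr(\hat\G)$ (invoking Theorem~\ref{318}), and the homeomorphism is obtained from the density of $L^1_*(\hat\G)$ and of $\hat\lambda_u(L^1_*(\hat\G))$. The only cosmetic difference is that you invoke Theorem~\ref{318} up front as a reduction to $Gr(\hat\G)$, whereas the paper starts directly from $sp(L^1(\hat\G))$ and appeals to Theorem~\ref{318} mid-proof to obtain $\hat S(\hat x)=\hat x^*$.
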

\begin{proof} 
Let $\phi\in sp(\CU)$. Then for all $\hat\om_1, \hat\om_2\in L_*^1(\hat\G)$ we have
\begin{eqnarray*}
 \la \hat\om_1 \star \hat\om_2 , \lam(\phi)\rangle &=&  
 \la {\hat\lam}_u(\hat\om_1 \star \hat\om_2) , \phi\rangle
= \la {\hat\lam}_u(\hat\om_1) {\hat\lam}_u(\hat\om_2) , \phi\rangle
\\ &=& \la {\hat\lam}_u(\hat\om_1), \phi\rangle \la {\hat\lam}_u(\hat\om_2) , \phi\rangle
= \la \hat\om_1, \lam(\phi)\rangle \la \hat\om_2 , \lam(\phi)\rangle.
\end{eqnarray*}
Since $L_*^1(\hat\G)$ is norm dense in $L^1(\hat\G)$, and
$\langle L^1(\hat\G)\star L^1(\hat\G)\rangle$ is norm dense in $L^1(\hat\G)$,
we have that $\langle L_*^1(\hat\G)\star L_*^1(\hat\G)\rangle$
 is norm dense in $L^1(\hat\G)$.
Hence, $\lam(\phi)\in sp(L^1(\hat\G))$.

Now, let $\hat x \in sp(L^1(\hat\G))$. Then $x\in Gr(\hat\G)$, by
Theorem \ref{318}, and so we have $\hat S(\hat x) = {\hat x}^*$.
Therefore we get $\langle\hat\om^*,\hat x\rangle = 
\overline{\langle\hat\om,\hat S(\hat x)^*\rangle} = 
\overline{\langle\hat\om,\hat x\rangle}$ for all $\hat\om\in L_*^1(\hat\G)$.
Hence, the map $\langle \cdot , \hat x\rangle : L_*^1(\hat\G)\ra\C$
is a non-zero $^*$-homomorphism,
and so by the universality of $\CU$, we obtain a $^*$-homomorphism
$\theta_{\hat x} : \CU\ra\C$ such that 
$\langle {\hat\lam}_u(\hat\om) , \theta_{\hat x}\rangle = \langle\hat\om,\hat x\rangle$
for all $\hat\om\in L_*^1(\hat\G)$.

We show that the induced maps $sp(\CU)\ni\phi\mapsto\lam(\phi)\in sp(L^1(\hat\G))$ and
 $sp(L^1(\hat\G))\ni\hat x\mapsto\theta_{\hat x}\in sp(\CU)$ 
are inverses
to each other. Let $\hat x\in sp(L^1(\hat\G))$, then we have
$$\langle \hat\om , \lam(\theta_{\hat x})\rangle = 
\langle {\hat\lam}_u(\hat\om) , \theta_{\hat x}\rangle
= \langle\hat\om , \hat x\rangle$$
for all $\hat\om\in L_*^1(\hat\G)$, which yields,
by density of $L_*^1(\hat\G)$ in $L^1(\hat\G)$, that $\lam(\theta_{\hat x}) = \hat x$.
Conversely, assume that $\phi\in sp(\CU)$, then we have
\[
  \langle {\hat\lam}_u(\hat\om) , \theta_{\lam(\phi)}\rangle = 
\langle \hat\om , \lam(\phi)\rangle
= \langle{\hat\lam}_u(\hat\om) , \phi\rangle
\]
for all $\hat\om\in L_*^1(\hat\G)$. 
The density of ${\hat\lam}_u(L_*^1(\hat\G))$ in $\CU$
implies that $\theta_{\lam_u(\phi)} = \phi$.

Since $\lam : \CU^*\ra \LLL$ is an algebra homomorphism, the map 
$$sp(\CU)\ni\phi\mapsto\lam(\phi)\in sp(L^1(\hat\G))$$
defines a bijective group homomorphism.
Moreover, for a net $(\phi_\alpha)$ in $sp(\CU)$ and $\phi\in sp(\CU)$,
 the density of $L_*^1(\hat\G)$ and ${\hat\lam}_u(L_*^1(\hat\G))$ in
$L^1(\hat\G)$ and $\CU$, respectively, yield
\begin{eqnarray*}
\phi_\alpha\xrightarrow{w^*}\phi &\Longleftrightarrow&
\langle {\hat\lam}_u(\hat\om) , \phi_\alpha\rangle \longrightarrow \langle
 {\hat\lam}_u(\hat\om) , \phi\rangle
\ \forall \hat\om\in L_*^1(\hat\G) \\ &\Longleftrightarrow&
\langle \hat\om , \lam(\phi_\alpha)\rangle \longrightarrow \langle \hat\om , \lam(\phi)\rangle
\ \forall \hat\om\in L_*^1(\hat\G) \Longleftrightarrow
\lam(\phi_\alpha)\xrightarrow{w^*}\lam(\phi). 
\end{eqnarray*}
Hence, the map $sp(\CU)\ni\phi\mapsto\lam(\phi)\in sp(L^1(\hat\G))$ is a group homeomorphism.
\end{proof}

Next theorem combines all the above identifications.
\begin{theorem}\label{322} The following can be identified as locally compact groups:
\begin{enumerate}
 \item \ $\tilde\G$ with the strong operator topology;
 \item \ $Gr (\hat\G)$ with the weak$^*$ topology;
 \item \ $sp (L^1(\hat\G))$ with the weak$^*$ topology;
\item \ $Gr (\B(\LT),\hat\Gamma)$ with the weak$^*$ topology; 
\item \ $sp (\T_\star(\hat\G))$ with the weak$^*$ topology;
  \item \ $sp (\CU)$ with the weak$^*$ topology.
\end{enumerate}
\end{theorem}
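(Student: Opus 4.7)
The proposal is that Theorem \ref{322} is essentially a corollary assembled from the preceding results in the section, applied (where necessary) to the dual quantum group $\hat\G$ in place of $\G$. I would present it as a diagram of identifications and verify that each link has already been established, carrying along the correct topology at every step.

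First I would record the chain of identifications. The equivalence of (1) and (2) is exactly Theorem \ref{317}, which gives a group homeomorphism $\tilde\G \cong Gr(\hat\G)$ where the right-hand side carries the induced weak$^*$ topology from $\LLL$. Next, Proposition \ref{319} applied to $\hat\G$ in place of $\G$ yields $Gr(\hat\G) = Gr(\B(\LT),\hat\Gamma)$, identifying (2) with (4); since the topology on both sides is the restriction of the weak$^*$ topology of $\B(\LT)$, this is a homeomorphism as well (using that $\LLL \subseteq \B(\LT)$ is weak$^*$-closed). Theorem \ref{318} applied to $\hat\G$ gives $Gr(\hat\G) = sp(L^1(\hat\G))$, which matches (2) with (3); here the weak$^*$ topologies on $\LLL$ and on $L^1(\hat\G)^*$ agree. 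Corollary \ref{320} for $\hat\G$ then identifies (4) with (5). Finally, Theorem \ref{321} gives the homeomorphism between (3) and (6).

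The step that deserves the most care is keeping the \emph{topologies} aligned across these six descriptions, since the previous results were phrased partly as set-theoretic equalities and partly as homeomorphisms. For (2)$\leftrightarrow$(3)$\leftrightarrow$(4)$\leftrightarrow$(5) the underlying objects literally coincide inside $\B(\LT)$, and all four weak$^*$ topologies are restrictions of the ambient weak$^*$ topology on $\B(\LT)$; a net $\hat u_\alpha \xrightarrow{w^*} \hat u$ in $Gr(\hat\G)$ is tested by pairing with elements of $\T(\LT)$, and this test is equivalent to pairing with $L^1(\hat\G)$ (since $\LLL$ is a quotient of $\T(\LT)$) and with $\T_\star(\hat\G)$ (same underlying space $\T(\LT)$). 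The remaining homeomorphism assertions for (1) and (6) are exactly what was proved at the ends of Theorem \ref{317} and Theorem \ref{321} respectively, so nothing new is needed there.

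The main obstacle, if any, is the assertion that the common group is \emph{locally compact}. I would derive this from the identification with $sp(\CU)$: since $\CU$ is a $C^*$-algebra, its spectrum is a locally compact Hausdorff space in the weak$^*$ topology (the set of nonzero $^*$-characters is weak$^*$-closed in the unit ball minus $\{0\}$, hence locally compact by Alaoglu). Combined with the group structure inherited via the other identifications and with Proposition \ref{316}, this yields a locally compact topological group, completing the proof. I would finish by remarking that, under all six incarnations, the group operation and inversion are continuous, which has been verified case by case in the earlier results.
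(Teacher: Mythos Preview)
Your proposal is correct and follows the same approach as the paper: the theorem is assembled from the preceding results (Theorem \ref{317}, Theorem \ref{318}, Proposition \ref{319}, Corollary \ref{320}, Theorem \ref{321}), and local compactness is obtained from the realization as the spectrum of a Banach algebra. The paper's proof is in fact a single sentence invoking the local compactness of spectra, with the identifications taken as already established; your version simply spells out the chain and the topological compatibilities in more detail.
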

\begin{proof} Since the spectrum of a Banach algebra is locally compact
 with weak$^*$ topology, all the above groups are locally compact groups.
\end{proof}
\begin{remark}
Applying Theorem \ref{322} to the case where $\G = VN(G)$ for a locally compact group
$G$, we obtain a generalization of a Renault's result (cf. \cite[Theorem 2]{Ren}),
in which $G$ is assumed amenable.
\end{remark}
\begin{theorem} \label{323} The assignment $\G\ra\tilde\G$ preserves 
compactness, discreteness, and hence finiteness.
\end{theorem}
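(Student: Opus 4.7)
The plan is to leverage the multiple identifications established in Theorem \ref{322} and, for each preservation statement, choose the description of $\tilde\G$ that is easiest to analyze. Throughout, I will use the standard Pontryagin-type duality for locally compact quantum groups: $\G$ is compact if and only if $\hat\G$ is discrete, and vice versa.

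For compactness, I would work through the identification $\tilde\G\cong sp(L^1(\hat\G))$. If $\G$ is compact, then $\hat\G$ is discrete, so $L^1(\hat\G)$ is a unital Banach algebra. For any unital Banach algebra $A$, every $\phi\in sp(A)$ satisfies $\phi(1)=1$ and $\|\phi\|\le 1$ (the latter via a standard spectral-radius argument), and both the normalization $\phi(1)=1$ and the multiplicativity condition are preserved under weak$^*$-limits. Hence $sp(L^1(\hat\G))$ is a weak$^*$-closed subset of the closed unit ball of $L^1(\hat\G)^*$, which is weak$^*$-compact by Banach--Alaoglu. Therefore $\tilde\G$ is compact.

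For discreteness, I would work through the identification $\tilde\G\cong Gr(\hat\G)$ with the weak$^*$ topology. If $\G$ is discrete, then $\hat\G$ is compact, so its Haar weight $\hat\fee$ is a normal state on $L^\infty(\hat\G)$; in particular $\hat\fee\in L^1(\hat\G)$, so evaluation at $\hat\fee$ is weak$^*$-continuous. For any $u\in Gr(\hat\G)$, applying $(\id\otimes\hat\fee)$ to the identity $\hat\Gamma(u)=u\otimes u$ and invoking left invariance in the form $(\id\otimes\hat\fee)\circ\hat\Gamma=\hat\fee(\cdot)\,1$ yields $u\,\hat\fee(u)=\hat\fee(u)\cdot 1$, forcing either $\hat\fee(u)=0$ or $u=1$. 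Consequently, the weak$^*$-open set $\{x\in L^\infty(\hat\G):|\hat\fee(x)-1|<1/2\}$ is a neighborhood of $1$ that meets $Gr(\hat\G)$ only at $1$, so $\{1\}$ is open in $Gr(\hat\G)$, and the latter is discrete. Finiteness then follows as the conjunction of compactness and discreteness.

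I do not anticipate a major obstacle here. The content of the argument is largely organizational: one must recognize that compactness of $\tilde\G$ is most naturally extracted from the Gelfand-spectrum picture (unital algebra yields weak$^*$-compact spectrum), while discreteness is most naturally extracted from the intrinsic-group picture via a quantum analogue of the classical fact that distinct characters of a compact group are orthogonal in the Haar $L^2$-inner product.
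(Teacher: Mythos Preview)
Your proof is correct and follows essentially the same route as the paper: both arguments exploit that $L^1(\hat\G)$ is unital when $\G$ is compact (you via Banach--Alaoglu on characters of a unital algebra, the paper via the observation that the Gelfand transform of the unit is the constant function $1\in C_0(Gr(\hat\G))$), and both use the Haar state of the compact dual $\hat\G$ to show $\hat\fee(u)=0$ for $u\neq 1$ in $Gr(\hat\G)$ when $\G$ is discrete. The only difference is packaging---you phrase discreteness via an explicit weak$^*$-open neighborhood of $1$, the paper via continuity of the characteristic function $\chi_{\{1\}}$---but the substance is identical.
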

\begin{proof} Let $\G$ be compact. Then $\hat\G$ is discrete, and
in view of Theorem \ref{322}, we may equivalently show that $Gr(\hat\G)$ is compact.
Let $\hat e\in L^1(\hat\G)$ be the unit.
Then, for any $\hat x \in Gr(\hat\G)$, we have
\begin{eqnarray*}
 \langle\hat f,\hat x\rangle  =  \langle\hat f\star\hat e,\hat x\rangle 
= \langle\hat f\ot\hat e,\hat\Gamma(\hat x)\rangle 
 =  \langle\hat f\otimes\hat e,\hat x\otimes\hat x\rangle = 
\langle\hat f,\hat x\rangle\langle\hat e,\hat x\rangle 
\end{eqnarray*}
for all $\hat f\in L^1(\hat\G)$. So $\langle\hat e,\hat x\rangle = 1$ for all $\hat x\in\hat\G$
and since $Gr(\hat\G) = sp(L^1(\hat\G))$ by Theorem \ref{318}, this implies that
the constant function $\hat e|_{Gr(\hat\G)} \equiv 1$ lies in $C_0(Gr(\hat\G))$.
Therefore $Gr(\hat\G)$ is compact.

Now let $\G$ be discrete. Then $\hat\G$ is compact, and again by Theorem \ref{322}
we need to show that $Gr(\hat\G)$ is discrete. Let $\hat x\in Gr(\hat\G)$ and
$\hat\fee\in L^1(\hat\G)$ be the Haar state.
Then we have
$$\langle\hat f,1\rangle\langle\hat\fee,\hat x\rangle =
 \langle\hat f \star\hat \fee,\hat x\rangle 
= \langle\hat f\otimes\hat\fee,\hat x\otimes\hat x\rangle = 
\langle\hat f,\hat x\rangle\langle\hat \fee,\hat x\rangle$$
for all $\hat f\in L^1(\hat\G)$. So if $\hat x\neq 1$, 
we must have $\langle\hat\fee,\hat x \rangle = 0$,
and since $\langle\hat\fee,1\rangle = 1$,
we see that $\hat\fee$, as a function on $Gr(\hat\G)$ is the characteristic function
of $\{1\}$. But since $\hat\fee$ is continuous on $Gr(\hat\G)$, the latter must be discrete.
\end{proof}

In the following (Theorem \ref{325}), we shall investigate the relation between the operations
$\G\ra\tilde\G$ and $\G\ra\hat\G$.
\begin{lemma}\label{324}  Let $G$ and $H$ be two locally compact groups in duality,
 i.e., there exists a continuous bi-homomorphism
$\langle \cdot , \cdot \rangle : G\times H\ra\field T$, where $\field T$
denotes the unit circle.
Define the sets
\[\begin{array}{c}
G_{1}:=\{g\in G: \langle g,H\rangle = 1\},\\
H_{1}:=\{h\in H: \langle G,h\rangle = 1\}.
\end{array}
\]
Then $G_{1}$ and $H_{1}$ are closed normal subgroups of $G$ and $H$, containing the
commutator subgroups, and we have
$$\frac{G}{G_{1}}\,\cong\,\widehat{\left(\frac{H}{H_{1}}\right)}.$$
\end{lemma}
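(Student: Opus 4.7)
First I would verify the structural claims about $G_1$ and $H_1$. For each fixed $h \in H$, the map $g \mapsto \langle g, h\rangle$ is a continuous group homomorphism $G \ra \field T$; its kernel is therefore a closed normal subgroup of $G$ containing the commutator subgroup $[G,G]$, since $\field T$ is abelian. Writing $G_1 = \bigcap_{h \in H} \ker \langle \cdot, h\rangle$ exhibits $G_1$ as an intersection of such kernels, so $G_1$ is itself a closed normal subgroup of $G$ containing $[G,G]$; the analogous argument works for $H_1$. In particular the quotients $G/G_1$ and $H/H_1$ are locally compact abelian groups.

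Next, by the very definition of $G_1$ and $H_1$ the pairing descends to a continuous bi-homomorphism $G/G_1 \times H/H_1 \ra \field T$ which is \emph{separating} in each variable. Joint continuity of this pairing implies that for every compact $K \subseteq H/H_1$ and every open neighbourhood $U$ of $1$ in $\field T$, the set $\{gG_1 : \langle g, K\rangle \subseteq U\}$ is a neighbourhood of the identity in $G/G_1$; hence the assignment $\Phi: gG_1 \mapsto \langle g, \cdot\rangle$ defines a continuous group homomorphism $\Phi : G/G_1 \ra \widehat{H/H_1}$, the dual being equipped with the compact-open topology. Injectivity of $\Phi$ is precisely the nondegeneracy of the descended pairing in the first variable, and symmetrically one obtains an injective continuous homomorphism $\Psi : H/H_1 \ra \widehat{G/G_1}$.

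It remains to promote these two continuous injections to the asserted topological isomorphism, and this is where I would invoke Pontryagin duality for the locally compact abelian groups $G/G_1$ and $H/H_1$. The image $\Phi(G/G_1)$ is a subgroup of $\widehat{H/H_1}$ that separates points of $H/H_1$, so by the classical fact that a point-separating subgroup of $\widehat A$ (for $A$ locally compact abelian) is dense, $\Phi(G/G_1)$ is dense in $\widehat{H/H_1}$, and symmetrically for $\Psi$. Combining $\Phi$ with the Pontryagin biduality $\widehat{\widehat{H/H_1}} \cong H/H_1$ identifies $\Psi$ (up to canonical isomorphism) with the transpose of $\Phi$, which lets me bootstrap density into surjectivity on both sides and extract continuity of the inverse. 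The main obstacle is precisely this last step: upgrading ``continuous injection with dense image'' to ``topological isomorphism'' is the usual delicate point in duality arguments, and the clean way to handle it is not to construct an inverse for $\Phi$ by hand but to play $\Phi$ and $\Psi$ off against one another through biduality.
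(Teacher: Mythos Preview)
Your setup mirrors the paper's: both arguments verify that $G_1$ and $H_1$ are closed normal subgroups containing the commutators, produce the two injections $\Phi: G/G_1 \to \widehat{H/H_1}$ and $\Psi: H/H_1 \to \widehat{G/G_1}$, and appeal to biduality to relate them. You are in fact more careful than the paper about continuity of $\Phi$. The divergence is in the endgame.

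You route through a density argument (the image of $\Phi$ separates points of $H/H_1$, hence is dense in $\widehat{H/H_1}$) and then propose to ``bootstrap density into surjectivity'' by playing $\Phi$ and $\Psi$ against one another through biduality. But you do not carry this bootstrap out, and as stated it does not obviously go through: two continuous injections with dense image, each the transpose of the other, need not be surjective or open. You correctly flag this as the delicate point but leave it unexecuted, so the proposal is incomplete precisely where it matters.

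The paper's argument skips density entirely. It dualizes the injection $\Psi: H/H_1 \hookrightarrow \widehat{G/G_1}$ directly to a surjection $\widehat{\widehat{G/G_1}} \twoheadrightarrow \widehat{H/H_1}$, and then checks that composing this surjection with the biduality isomorphism $G/G_1 \cong \widehat{\widehat{G/G_1}}$ recovers exactly $\bar\phi$ (your $\Phi$); hence $\Phi$ is onto. So rather than proving density and then trying to upgrade, the paper recognizes that $\Phi$ \emph{is} $\hat\Psi$ modulo the canonical identification, and reads surjectivity of $\Phi$ off from injectivity of $\Psi$ in one stroke. Your density observation is correct but not needed for this route; the missing move in your plan is this direct identification $\Phi = \hat\Psi \circ \kappa_{G/G_1}$ together with the functorial step ``dual of an injection is a surjection''.
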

\begin{proof} For all $g\in G$, $g_{1}\in G_{1}$ and $h\in H$ we have
$$\langle g^{-1}g_{1}g,h\rangle = \langle g^{-1},h\rangle
\langle g_{1},h\rangle\langle g,h\rangle = 
\langle g^{-1},h\rangle\langle g,h\rangle = \langle e,h\rangle = 1.$$
Therefore, $G_{1}$ is normal in $G$.
For all $g_{1},g_{2}\in G$ and $h\in H$ we have
$$\langle g_{1}g_{2}g_{1}^{-1}g_{2}^{-1},h\rangle =
\langle g_{1},h\rangle\langle g_{2},h\rangle
\langle g_{1}^{-1},h\rangle\langle g_{2}^{-1},h\rangle = 1.$$
Thus, $[G,G]\subseteq G_{1}$; 
similarly, we see that $H_1$ is normal in $H$, and $[H,H]\subseteq H_{1}$.
Now it just remains to show the last assertion. Define
$$\phi:G\rightarrow\widehat{\left(\frac{H}{H_{1}}\right)}, 
 \phi(g)(\overline{h}) = \langle g,h\rangle.$$
The definition of $H_1$ implies that $\phi(g)$ is
well-defined for each $g\in G$. 
Obviously, $\phi$ is a group homomorphism, and we have
$Ker (\phi)=G_{1}$. Hence we have an injective group homomorphism
$$\overline{\phi}:\frac{G}{G_{1}}\hookrightarrow\widehat{\left(\frac{H}{H_{1}}\right)}.$$
Similarly, by exchanging the roles of $G$ and $H$ we obtain
$$\frac{H}{H_{1}} \hookrightarrow \widehat{\left(\frac{G}{G_{1}}\right)}$$
whence
$$\widehat{\widehat{\left(\frac{{G}}{G_{1}}\right)}}
 \twoheadrightarrow\widehat{\left(\frac{H}{H_{1}}\right)}.$$
If we compose the last surjection with the identification of
$\frac{G}{G_{1}}$ with its second dual, we get
$\overline{\phi}$. Hence $\overline{\phi}$ is onto, and
$$\frac{G}{G_{1}} \,\cong\, \widehat{\left(\frac{H}{H_{1}}\right)}.$$
\end{proof}
It follows from Theorem \ref{317.5} and Lemma \ref{324} that
$Gr(\G)\cap Gr(\hat \G)'$ is a normal subgroup of $Gr(\G)$, and
\[
\frac{Gr(\G)}{Gr(\G)\cap Gr(\hat \G)'}\
\]
is an abelian group.  In the following we denote this group by $\tilde\G_1$.
\begin{theorem} \label{325}
Let $\G$ be a locally compact quantum group. Then we have a group homeomorphism
$${\hat{\tilde\G}}_1 \cong {\tilde{\hat\G}}_1 \ .$$
\end{theorem}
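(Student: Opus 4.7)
The plan is to apply Lemma \ref{324} with $G = Gr(\G)$ and $H = Gr(\hat\G)$, where the pairing is the one produced by the Heisenberg-type commutation relation of Theorem \ref{317.5}. Concretely, for $u \in Gr(\G)$ and $\hat u \in Gr(\hat\G)$, Theorem \ref{317.5} produces a unique $\lambda(u,\hat u) \in \field{T}$ with $u\hat u = \lambda(u,\hat u)\hat u u$, and I would set $\langle u, \hat u\rangle := \lambda(u,\hat u)$.

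The first step is to verify that this pairing satisfies the hypotheses of Lemma \ref{324}. The bi-homomorphism property is a direct computation: from
\[
\lambda(u_1 u_2,\hat u)\hat u u_1 u_2 = u_1 u_2 \hat u = \lambda(u_1,\hat u)\lambda(u_2,\hat u)\hat u u_1 u_2,
\]
one reads off $\lambda(u_1 u_2, \hat u) = \lambda(u_1,\hat u)\lambda(u_2,\hat u)$, and symmetrically in the second variable. Continuity of the pairing reduces to continuity of the group commutator $(u,\hat u) \mapsto u\hat u u^{-1}\hat u^{-1}$ on $Gr(\G)\times Gr(\hat\G)$ (both equipped with their weak$^*$ topologies from Theorem \ref{322}) followed by the extraction of the scalar — both operations being jointly continuous.

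Next I would identify the kernels produced by Lemma \ref{324} with the groups appearing in the theorem. By construction, $u$ lies in $G_1$ iff $\lambda(u,\hat u) = 1$ for every $\hat u \in Gr(\hat\G)$, i.e., iff $u$ commutes with every element of $Gr(\hat\G)$; thus $G_1 = Gr(\G)\cap Gr(\hat\G)'$, and by definition $G/G_1 = \tilde\G_1$. Applying the same reasoning with the roles of $\G$ and $\hat\G$ swapped — and using $\hat{\hat\G} = \G$ to write $\tilde{\hat\G}_1 = Gr(\hat\G)/(Gr(\hat\G)\cap Gr(\G)')$ — gives $H/H_1 = \tilde{\hat\G}_1$.

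Lemma \ref{324} then yields the group homeomorphism $\tilde\G_1 \cong \widehat{\tilde{\hat\G}_1}$. Since both $\tilde\G_1$ and $\tilde{\hat\G}_1$ are locally compact abelian (abelianness is exactly the consequence of Theorem \ref{317.5} that was highlighted just before the theorem), Pontryagin duality is an involution on this category, so dualizing both sides gives $\hat{\tilde\G}_1 \cong \tilde{\hat\G}_1$. The main technical point to be careful with is the joint continuity and bi-multiplicativity of the scalar pairing; once those are in hand, the proof is a clean application of Lemma \ref{324}.
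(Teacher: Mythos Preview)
Your proposal is correct and follows the same approach as the paper: set up the bi-homomorphism $Gr(\G)\times Gr(\hat\G)\to\field T$ from the Heisenberg relation (Theorem \ref{317.5}) and apply Lemma \ref{324}. The paper's proof is a terse two-sentence version of exactly this; you have simply supplied the details (bi-multiplicativity, continuity, identification of the kernels with the commutant intersections, and the final Pontryagin dualization), so there is nothing substantive to compare.
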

\begin{proof}
By Theorem \ref{317.5} we have a duality between $\tilde\G$ and ${\tilde {\hat{\G}}}$.
 Hence, theorem follows from Lemma \ref{324}.
\end{proof}

\par
In the above, to a locally compact quantum group $\G$,
we have assigned the locally compact group $\tilde\G$,
 which is easily seen to be an
invariant for $\G$. We shall now assign another invariant to $\G$.

Let $\G$ be a locally compact quantum group,
$v \in G r (\G)$ and $\hat{v} \in G r (\hat{\G})$. By Theorem \ref{317.5},
there exists $\lambda_{v,\hat{v}} \in \field T$ such that
$v \hat{v} = \lambda_{v,\hat{v}} \hat{v} v$.
It is easy to see that $(v,\hat v)\mapsto\lambda_{v,\hat{v}}$
defines a bi-homomorphism $\gamma: G r (\G) \times G r (\hat{\G}) \rightarrow \field T$.
\begin{proposition}
Let $\G$ be a locally compact quantum group. 
Then $Im(\gamma)$, the image
of $\gamma$, is a subgroup of $\field T$, and an invariant for $\G$.
\end{proposition}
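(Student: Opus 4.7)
The plan is to verify the two claims separately, using only the bi-homomorphism property of $\gamma$ plus the naturality of the constructions $\G\mapsto Gr(\G)$ and $\G\mapsto Gr(\hat\G)$.

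First I would show $\mathrm{Im}(\gamma)$ is a subgroup of $\field T$ by checking the three group axioms directly from the definition $v\hat v=\lambda_{v,\hat v}\hat v v$ in $\B(\LT)$. For the identity, $\gamma(e,e)=1$, where $e$ is the group unit in $Gr(\G)$ (respectively $Gr(\hat\G)$). For closure under inverses, conjugating $v\hat v=\lambda_{v,\hat v}\hat v v$ by $v^{-1}$ on both sides gives $\hat v = \lambda_{v,\hat v} v^{-1}\hat v v$, hence $v^{-1}\hat v=\lambda_{v,\hat v}^{-1}\hat v v^{-1}$, so $\gamma(v^{-1},\hat v)=\gamma(v,\hat v)^{-1}$. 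For closure under products, the bi-homomorphism identities $\gamma(v_1 v_2,\hat v)=\gamma(v_1,\hat v)\gamma(v_2,\hat v)$ and $\gamma(v,\hat v_1\hat v_2)=\gamma(v,\hat v_1)\gamma(v,\hat v_2)$ — both immediate from a direct computation of $(v_1v_2)\hat v$ and $v(\hat v_1\hat v_2)$ using the defining relation twice — combined with commutativity of $\field T$, produce any product $\gamma(v_1,\hat v_1)\gamma(v_2,\hat v_2)$ as a value of $\gamma$.

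Next I would prove invariance. Suppose $\pi\colon \G\to\G'$ is an isomorphism of locally compact quantum groups, that is, a normal $^*$-isomorphism $\pi\colon \LL\to L^\infty(\G')$ intertwining the coproducts, $\Gamma'\circ\pi=(\pi\ot\pi)\circ\Gamma$. Then $\pi$ restricts to a topological group isomorphism $Gr(\G)\cong Gr(\G')$, since the condition $\Gamma(v)=v\ot v$ is preserved. By the functoriality of the Kustermans–Vaes duality, $\pi$ induces a dual isomorphism $\hat\pi\colon L^\infty(\hat\G)\to L^\infty(\hat\G')$ which likewise restricts to $Gr(\hat\G)\cong Gr(\hat\G')$. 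The crucial point is that both $\pi$ and $\hat\pi$ are spatially implemented by a \emph{single} unitary $U\colon\LT\to L^2(\G')$, obtained from the canonical intertwiner of the GNS representations of the left Haar weights (equivalently, $U$ intertwines the multiplicative unitaries $W$ and $W'$). Conjugating the identity $v\hat v=\lambda_{v,\hat v}\hat v v$ by $U$ then yields $\pi(v)\hat\pi(\hat v)=\lambda_{v,\hat v}\hat\pi(\hat v)\pi(v)$ in $\B(L^2(\G'))$, so $\lambda_{\pi(v),\hat\pi(\hat v)}=\lambda_{v,\hat v}$. Since $\pi$ and $\hat\pi$ are bijections on the respective intrinsic groups, this gives $\mathrm{Im}(\gamma_\G)=\mathrm{Im}(\gamma_{\G'})$.

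The main obstacle is the second half: making precise that a Hopf--von Neumann isomorphism lifts to a spatial implementation on $\B(\LT)$ that \emph{simultaneously} carries $Gr(\G)$ and $Gr(\hat\G)$ to their primed counterparts. The naive argument on the Hopf side only gives $\pi$ as an abstract algebra map, so one must invoke the compatibility of the left Haar weights (or the multiplicative unitaries) under $\pi$ to obtain the unitary $U$ intertwining both $\pi$ and $\hat\pi$; this is the standard functoriality of the Kustermans--Vaes construction, and once it is quoted the preservation of the scalar $\lambda_{v,\hat v}$ is automatic.
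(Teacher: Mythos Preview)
Your closure argument for $\mathrm{Im}(\gamma)$ does not work. From the bi-homomorphism identities one obtains
\[
\gamma(v_1v_2,\hat v_1\hat v_2)=\gamma(v_1,\hat v_1)\,\gamma(v_1,\hat v_2)\,\gamma(v_2,\hat v_1)\,\gamma(v_2,\hat v_2),
\]
with two unwanted cross terms; there is no way to isolate $\gamma(v_1,\hat v_1)\gamma(v_2,\hat v_2)$ as a single value of $\gamma$. Indeed, for a generic bi-homomorphism $\field Z^2\times\field Z^2\to\field T$ sending the four basis pairs $(e_i,e_j)$ to multiplicatively independent elements $a,b,c,d\in\field T$, the element $ad=\gamma(e_1,e_1)\gamma(e_2,e_2)$ lies in the subgroup generated by the image but not in the image itself (one would need $m_1n_1=m_2n_2=1$ and $m_1n_2=m_2n_1=0$, which is impossible over $\field Z$). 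So the set-theoretic image of a bi-homomorphism into $\field T$ need not be closed under products, and ``commutativity of $\field T$'' does not rescue the direct check.

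The paper's route is different: it first passes to the abelian quotients $\tilde\G_1$ and $\tilde{\hat\G}_1$ and then invokes the universal property of the tensor product of abelian groups to factor $\gamma$ through a genuine group homomorphism $\gamma_2\colon \tilde\G_1\otimes_{\field Z}\tilde{\hat\G}_1\to\field T$, whose image is automatically a subgroup. Strictly speaking $\mathrm{Im}(\gamma_2)$ is the subgroup \emph{generated} by $\mathrm{Im}(\gamma)$ (simple tensors only generate the tensor product), so the paper is tacitly making that identification; but the tensor-product construction at least produces a canonical subgroup of $\field T$ attached to $\G$, which your bare-hands verification cannot reach. Your sketch of the invariance claim, via a unitary implementing both $\pi$ and $\hat\pi$ on $\LT$, is more detailed than anything the paper supplies on that point and is a reasonable outline.
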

\begin{proof} Using the above notation, we have a 
bi-character $\gamma_1 \,:\, {\tilde\G}_1 \times {\tilde{\hat\G}}_1
\rightarrow \field T$, induced by $\gamma$, with $I m (\gamma_1) = I m (\gamma)$ 
(see the proof of Lemma \ref{324}). 
Since ${\tilde\G}_1$ and ${\tilde{\hat\G}}_1$ are abelian, 
by the universal property of the
tensor product, there exists a homomorphism $\gamma_2 : {\tilde\G}_1
\otimes_{\field{Z}} {\tilde{\hat\G}}_1 \rightarrow \field T$, with $I m
(\gamma_2) = I m (\gamma_{1})$.
But $I m (\gamma_{2})$ is a
subgroup of $\field T$ since $\gamma_2$ is a group homomorphism.

\end{proof}

Since there is a good classification of subgroups of $\field T$ 
(cf. \cite[Theorem 25.13]{H-R}),
this invariant may be helpful towards some sort of classification of 
locally compact quantum groups.

\section{Examples}
In this section we calculate the locally compact group
$\tilde\G$ for some of the most interesting and well-known examples of non-classical,
non-Kac, locally compact quantum groups.

\subsection{Woronowicz's Compact Matrix Pseudogroups}
Let $A$ be a $C^*$-algebra with unit, $U_{N}=[u_{ij}]$
an $N\times N$ ($N\in\field N$) matrix with entries
belonging to $A$, and $\A$ be the $^*$-subalgebra of $A$
generated by the entries of $U_{N}$. Then $\mathbf {G} = (A,U_{N})$
is called a {\it compact matrix pseudogroup} \cite[Definition 1.1]{Wor1}
if the following hold:
\begin{enumerate}\item
 $\A$ is dense in $A$;
\item  there exists a $C^*$-homomorphism
$\Gamma$ from $A$ to $A \mtp A$ such that
$$ \Gamma(u_{ij}) = \sum_{k=1}^N u_{ik}\ot u_{kj} \ \  \ \ (i,j = 1,2,...,N);$$
\item  there exists a linear anti-multiplicative map
$\kappa : \A \ra \A$ such that
$$\kappa(\kappa(a^*)^*) = a$$
for all $a\in A$, and
\[\begin{array}{lll}
\sum_k \kappa(u_{ik})u_{kj} & = & \delta_{ij}1,\\
\sum_k u_{ik}\kappa(u_{kj}) & = & \delta_{ij}1.
\end{array}
\]
for all $i,j = 1,2,...,N$.
\end{enumerate}
\begin{theorem}\label{351}
 Let $\mathbf {G} =(A,U_{N})$ be a compact matrix pseudogroup. 
Then $\tilde\G$ is homeomorphic 
 to a compact subgroup of $GL_N(\C)$, hence a compact Lie group.
\end{theorem}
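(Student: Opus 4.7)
The plan is to use Theorem \ref{322} to identify $\tilde{\mathbf{G}}$ with $sp(C_u(\mathbf{G}))$, and then evaluate characters on the matrix of generators $U_N$ to produce a continuous injective group homomorphism into $GL_N(\C)$. Since compact matrix pseudogroups are compact quantum groups in the Kustermans--Vaes sense, Theorem \ref{323} immediately gives that $\tilde{\mathbf{G}}$ is compact, so the image of this embedding will automatically be a closed subgroup of the Lie group $GL_N(\C)$ and hence itself a Lie group.

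Concretely, I would first observe that characters of $C_u(\mathbf{G})$ correspond bijectively to unital $^*$-algebra characters of the dense Hopf $^*$-subalgebra $\mathcal{A}$ generated by the $u_{ij}$: one direction is restriction, and the other uses the universal property of $C_u(\mathbf{G})$ together with the fact that a one-dimensional $^*$-representation of $\mathcal{A}$ is automatically bounded on the generators. Using this identification, I define
$$\Phi : \tilde{\mathbf{G}} \longrightarrow M_N(\C), \qquad \Phi(\chi) \,=\, [\chi(u_{ij})]_{i,j=1}^N.$$
Applying $\chi$ to the antipode relations $\sum_k u_{ik}\kappa(u_{kj}) = \delta_{ij}1 = \sum_k \kappa(u_{ik})u_{kj}$ shows that $\Phi(\chi)$ is invertible with inverse $[\chi(\kappa(u_{ij}))]$, so $\Phi$ lands in $GL_N(\C)$. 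The map $\Phi$ is injective because $\mathcal{A}$ is generated as a unital $^*$-algebra by the $u_{ij}$, so $\chi$ is determined by its values on the generators. It is a group homomorphism by a short computation: the coproduct formula $\Gamma(u_{ij}) = \sum_k u_{ik} \otimes u_{kj}$ makes the convolution product on characters restrict, on matrix coefficients, to ordinary matrix multiplication. Continuity is clear, since the weak$^*$ topology on $sp(C_u(\mathbf{G}))$ restricted to the finite family $\{u_{ij}\}$ is precisely entry-wise convergence of $\Phi(\chi)$.

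To conclude, $\Phi$ is a continuous injective group homomorphism from the compact space $\tilde{\mathbf{G}}$ into the Hausdorff space $GL_N(\C)$, hence a homeomorphism onto its image; that image is a compact, therefore closed, subgroup of $GL_N(\C)$, and by Cartan's closed subgroup theorem it is a compact Lie group. The only delicate point in this plan is the identification of $sp(C_u(\mathbf{G}))$ with characters of the algebraic Hopf $^*$-algebra $\mathcal{A}$, which rests on the universal property of $C_u(\mathbf{G})$ and standard compact quantum group theory; once this is in hand, the remainder of the argument is essentially linear algebra together with the elementary topology of compact groups.
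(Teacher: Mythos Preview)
Your proposal is correct and follows essentially the same route as the paper: identify $\tilde\G$ with a spectrum via Theorem~\ref{322}, evaluate characters on the matrix entries $u_{ij}$ to get an injective continuous group homomorphism into $GL_N(\C)$, and use compactness from Theorem~\ref{323} to conclude that this map is a homeomorphism onto a compact (hence closed, hence Lie) subgroup. The only differences are cosmetic: you are more explicit about invertibility of $\Phi(\chi)$ via the antipode relations and about invoking Cartan's theorem, and you take care to work with $sp(C_u(\G))$ and its identification with $^*$-characters of $\mathcal{A}$, whereas the paper writes $sp(A)$ directly.
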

\begin{proof}  Define the map
$$\Phi:sp(A)\rightarrow M_N(\C) \ \ , \ \ f\mapsto [f(u_{ij})]_{ij}.$$
Then $\Phi$ is injective since $\{u_{ij}\}$ generates $A$.
For all $f,g\in sp(A)$, we have
\begin{eqnarray*}
\Phi(f\star g) &=& [f\star g(u_{ij})]_{ij}
= [(f\otimes g)\Gamma(u_{ij})]_{ij}
= [(f\otimes g)\sum_{k=1}^N(u_{ik}\otimes u_{kj})]_{ij}\\
&=& [\sum_{k=1}^Nf(u_{ik})g(u_{kj})]_{ij}
= [f(u_{ij})]_{ij}[g(u_{ij})]_{ij}
= \Phi(f)\Phi(g).
\end{eqnarray*}
So, $\Phi$ is an injective group homomorphism. Obviously $Im(\Phi)\subseteq GL_N(\C)$.

Since each of the maps $f\mapsto f(u_{ij})$ is continuous, $\Phi$ is also continuous.
By Theorem \ref{323}, $\tilde\G$ is compact, 
and therefore $\Phi$ is a homeomorphism onto its image.
\end{proof}

\subsection{$\mathbf{SU_{\mu}(2)}$}\label{sumu}
In the following, we consider Woronowicz's   twisted $SU_q(2)$   quantum group  
for  $q\in (-1, 1)$ and $q\neq 0$ (cf. \cite{Wor5}).
The quantum group  $SU_q(2)$ is a co-amenable compact matrix pseudogroup 
with the quantum group $C^*$-algebra $C(SU_q(2))= C_{u}(SU_{q}(2))$ generated by 
two operators $u$ and $v$ such that the matrix  
 $$U  = \left [ \begin{array}{cc}
  u & -qv^{*}  \\
  v & u^{*} 
\end{array} \right] $$
  is a unitary matrix in $M_{2}(C(SU_q(2)))$.
  
\begin{theorem}
Let $\G = SU_{\mu}(2)$. Then $\tilde\G = \field T$.
\end{theorem}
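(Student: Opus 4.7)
The plan is to specialize the argument of Theorem~\ref{351} to the concrete generators of $SU_\mu(2)$ and pin down precisely which matrices in $GL_2(\C)$ arise as $\Phi(f)$ for a character $f$. Since $SU_\mu(2)$ is co-amenable, (\ref{F.equi}) gives $C_u(SU_\mu(2)) = C_0(SU_\mu(2)) = C(SU_\mu(2))$, so by Theorem~\ref{322} we have $\tilde\G \cong sp(C(SU_\mu(2)))$. The injective homomorphism $\Phi(f) = [f(u_{ij})]_{ij}$ from the proof of Theorem~\ref{351} embeds this spectrum into $GL_2(\C)$, so it suffices to determine its image.

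First I would observe that for any character $f$, $\Phi(f)$ must be a unitary $2\times 2$ matrix over $\C$, since $U$ is unitary in $M_2(C(SU_\mu(2)))$ and $(\id\ot f)(U) = \Phi(f)$ is its image under the unital $^*$-homomorphism $\id\ot f$. Writing $\alpha = f(u)$ and $\beta = f(v)$, the unitarity of
\[
\Phi(f) = \begin{pmatrix} \alpha & -\mu\bar\beta \\ \beta & \bar\alpha \end{pmatrix}
\]
forces (comparing $\Phi(f)\Phi(f)^*$ with $I$) both $|\alpha|^2 + \mu^2|\beta|^2 = 1$ and $|\alpha|^2 + |\beta|^2 = 1$. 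Since $\mu^2 \neq 1$ this yields $\beta = 0$ and $|\alpha| = 1$. Conversely, for each $\lambda\in\field T$, the assignment $u\mapsto\lambda$, $v\mapsto 0$ extends to a character of $C(SU_\mu(2))$: the universal relations defining $SU_\mu(2)$ are satisfied by the diagonal matrix $\mathrm{diag}(\lambda,\bar\lambda)$, so by the universal property a unique $^*$-homomorphism to $\C$ exists.

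Thus $\Phi$ induces a bijection $f_\lambda \leftrightarrow \lambda$ between $sp(C(SU_\mu(2)))$ and $\field T$. To see this is a group isomorphism, I would compute
\[
(f_\lambda\star f_{\lambda'})(u) = (f_\lambda\ot f_{\lambda'})(u\ot u - \mu v^*\ot v) = \lambda\lambda',
\]
and similarly $(f_\lambda\star f_{\lambda'})(v) = 0$, so convolution on characters corresponds to multiplication on $\field T$. For the topology: weak$^*$ convergence $f_{\lambda_\alpha}\to f_\lambda$ is equivalent to $f_{\lambda_\alpha}(u)\to f_\lambda(u)$ (since $u,v$ generate $C(SU_\mu(2))$ and the nets are bounded), i.e., to $\lambda_\alpha \to \lambda$ in $\field T$. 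Combining these identifications with Theorem~\ref{322} yields $\tilde\G \cong \field T$.

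The only subtlety worth flagging is the computation of the unitarity constraint and the use of the universal property to produce the converse characters; everything else is a direct application of the identifications already established. No step here poses a real obstacle since the generators and relations of $SU_\mu(2)$ are explicit.
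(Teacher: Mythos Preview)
Your proof is correct and follows essentially the same route as the paper: apply the embedding $\Phi$ of Theorem~\ref{351}, note that $\Phi(f)$ is unitary, and deduce $f(v)=0$ from the two diagonal unitarity relations $|\alpha|^2+\mu^2|\beta|^2=1$ and $|\alpha|^2+|\beta|^2=1$ (the paper leaves this computation implicit). Your additional verifications of the group law and topology are already covered by Theorem~\ref{351}, which shows $\Phi$ is a group homeomorphism onto its image, so they are redundant but harmless.
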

\begin{proof} Let $\Phi: sp(C(SU_{\mu}(2))) \rightarrow GL_{2}(\C)$
be as in the proof of Theorem \ref{351}, and $f\in sp(C(SU_{\mu}(2)))$.
It is easy to verify that $\field T\subseteq Im(\Phi)$, 
under the identification of $\field T$ with the matrices of the form
\renewcommand{\baselinestretch}{1}\normalsize
\[
\Phi(f) =  \left( \begin{array}{cc}
  \lambda & 0  \\
  0 & \bar{\lambda} 
\end{array} \right) \]
where $\lambda\in\field T$. To see  that any element in $Im(\Phi)$ is of this form,
note that $\Phi(f)$ is a unitary matrix in $M_{2}(\C)$ for all $f\in sp(C(SU_{\mu}(2)))$.
But since
\renewcommand{\baselinestretch}{1}\normalsize
\[
\Phi(f) =  \left( \begin{array}{cc}
  f(u) & -q\overline{f(v)}  \\
  f(v) & \overline{f(u)} 
\end{array} \right)\, , \]
it follows that $f(v)=0$, and theorem follows.
\end{proof}

\subsection{$\mathbf{E_{\mu}(2)}$ and its Dual}
Let $(e_{k,l})_{k,l\in \Z}$ be the canonical basis for $l^2(\Z \times \Z)$.
Define operators $v$ and $n$ on $l^2(\Z \times \Z)$ as follows:
\[
\left\{ \begin{array}{lll}
ve_{k,l} & = & e_{k-1,l} \\
ne_{k,l} & = & {\mu}^k e_{k,l+1}\, .
\end{array}
\right.\]
Then $v$ is a unitary and $n$ is a normal operator with
$sp(n)\subseteq \overline{C}^\mu$, where
$$\overline{\C}^\mu:=\{z\in\C : z=0 \ \text{or} \ |z|\in\mu^\Z\}.$$
The ($C^*$-algebraic) locally compact quantum group
$E_{\mu}(2)$ is defined (cf. \cite[Section 1]{Wor4}) to be the non-unital
$C^*$-algebra generated by the operators of the form
$\Sigma v^k f_{k} (n)$, where $k$ runs over a
finite set of integers, and $f_{k} \in
C_0(\overline{\C}^\mu)$.
The co-multiplication $\Gamma$ is defined on $E_\mu(2)$ in the following way:
\[\left\{\begin{array}{lll}
\Gamma(v) & = & v\otimes v \\
\Gamma(n) & = & v\otimes n + n\otimes v\, .
\end{array}\right.\]
Using \cite[Theorem 1.1]{Wor4}, we can calculate $\tilde\G$ for the
quantum group $E_\mu(2)$.
\begin{theorem}
Let $\G = E_\mu(2)$. Then $\tilde\G \cong \field T $ .
\end{theorem}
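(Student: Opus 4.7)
By Theorem \ref{322}(6), $\tilde\G$ is homeomorphic as a locally compact group to $sp(\CU)$ with $\CU = C_u(E_\mu(2))$, so the goal is to identify this spectrum with $\field T$. The strategy parallels the $SU_\mu(2)$ computation: determine all $*$-characters on the universal $C^*$-algebra directly from its generators and relations.

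A direct computation on the basis $(e_{k,l})$ yields the commutation relation $vn = \mu\, nv$. For any $\chi \in sp(\CU)$, unitarity of $v$ gives $\chi(v) \in \field T$, and then $\chi(v)\chi(n) = \mu\,\chi(n)\chi(v)$ yields $(1-\mu)\chi(n) = 0$, so $\chi(n) = 0$ since $\mu \neq 1$. Thus every character is determined by $\lambda := \chi(v) \in \field T$, and the map $\Phi : \chi \mapsto \chi(v)$ embeds $sp(\CU)$ into $\field T$. For surjectivity I would apply Theorem 1.1 of \cite{Wor4}, which presents $E_\mu(2)$ as the universal $C^*$-algebra for pairs $(v,n)$ with $v$ unitary, $n$ normal satisfying $sp(n)\subseteq\overline{\C}^\mu$, and $vn = \mu nv$. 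For each $\lambda\in\field T$, the pair $(\lambda, 0)$ trivially satisfies these relations (since $0 \in \overline{\C}^\mu$), so the universal property yields a character $\chi_\lambda$ acting on the dense subalgebra by $\chi_\lambda\bigl(\sum_k v^k f_k(n)\bigr) = \sum_k \lambda^k f_k(0)$.

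It remains to verify that $\Phi$ is a group homeomorphism. The coproduct formulas $\Gamma(v) = v\otimes v$ and $\Gamma(n) = v\otimes n + n\otimes v$ immediately give $(\chi_{\lambda_1}\otimes\chi_{\lambda_2})\Gamma(v) = \lambda_1\lambda_2$ and $(\chi_{\lambda_1}\otimes\chi_{\lambda_2})\Gamma(n) = 0$, confirming $\chi_{\lambda_1}\star\chi_{\lambda_2} = \chi_{\lambda_1\lambda_2}$. As for the topology, weak-$*$ convergence $\chi_\alpha\to\chi$ reduces, via the formula $\chi(v^k f(n)) = \chi(v)^k f(0)$ and the existence of $f \in C_0(\overline{\C}^\mu)$ with $f(0)=1$, to $\chi_\alpha(v)\to\chi(v)$, so $\Phi$ is a homeomorphism. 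The main obstacle is the surjectivity step, where one must appeal to \cite[Theorem 1.1]{Wor4} to ensure that the assignment $v\mapsto\lambda$, $n\mapsto 0$ extends from the algebraic level to a genuine $*$-homomorphism on the full universal $C^*$-algebra.
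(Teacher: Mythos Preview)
Your proposal is correct and follows essentially the same route as the paper: both arguments use the relation $vnv^{*}=\mu n$ together with $\mu\neq 1$ to force $\chi(n)=0$, unitarity of $v$ to get $\chi(v)\in\field T$, Woronowicz's universal property \cite[Theorem 1.1]{Wor4} to produce a character for each $\lambda\in\field T$, and the coproduct formulas to verify $\chi_{\lambda_1}\star\chi_{\lambda_2}=\chi_{\lambda_1\lambda_2}$. The only cosmetic difference is that you phrase the computation in terms of $sp(\CU)$ while the paper works directly with $sp(C_0(E_\mu(2)))$; since Woronowicz's theorem already endows the concretely defined algebra with the relevant universal property, this is the same object, and your added remark on the topology is a welcome detail the paper leaves implicit.
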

\begin{proof}
In view of \cite[Theorem 1.1]{Wor4}, for any $z\in \field T$,
we can define $f_z\in sp(C_0(E_\mu(2)))$ such
that $f_z(v)=z$ and $f_z(n)=0$.
Conversely, for each $f\in sp(C_0(E_\mu(2)))$,  by
 \cite[Theorem 1.1]{Wor4} again, $f(v)$ 
is a unitary and $f(n)$ is a normal operator on $\C$, 
and we have $f(n)=f(v)f(n)\overline{f(v)}=\mu f(n)$. 
Since $\mu\neq 1$, we must have $f(n)=0$. Put $z:=f(v)\in\field T$. So $f=f_z$.
Moreover, for $z,z'\in \field T$ we have
\begin{eqnarray*}
f_z\star f_{z'}(v) &=& (f_z\otimes f_{z'})\Gamma(v) = (f_z\otimes f_{z'})(v\otimes v)\\
&=& f_z(v)f_{z'}(v) = zz' = f_{zz'}(v), \\
f_z\star f_{z'}(n) &=& (f_z\otimes f_{z'})\Gamma(n) = 
(f_z\otimes f_{z'})(v\otimes n+n\otimes v)\\
&=& f_z(v)f_{z'}(n)+f_z(n)f_{z'}(v) = 0 = f_{zz'}(n).
\end{eqnarray*}
Hence $f_z\star f_{z'}=f_{zz'}$.
\end{proof}

In \cite{Wor4} Woronowicz has also described $\widehat{E_\mu(2)}$, 
the dual quantum group of $E_\mu(2)$.
Similarly to $E_\mu(2)$, this quantum group is determined by 
two operators $N$ and $b$,
with co-multiplication determined by $\hat\Gamma(N) = N\ot 1 + 1\ot N$,
$\hat\Gamma(b) = b\ot \mu^{\frac{N}{2}} + \mu^{\frac{-N}{2}}\ot b$. 
The dual quantum group $\widehat{E_\mu(2)}$ has a universal property as well,
which makes it easy to calculate our group.
\begin{theorem}\label{emuhat}
Let $\G = \widehat{E_\mu(2)}$. Then $\tilde\G\cong \field Z $ .
\end{theorem}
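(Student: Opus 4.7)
The plan is to mimic the preceding theorem on $E_\mu(2)$, transferring the argument to the dual via Theorem \ref{322}: under that identification, $\tilde\G \cong sp(C_u(\widehat{E_\mu(2)}))$ as locally compact groups, so it suffices to determine the characters of the universal $C^*$-algebra of $\widehat{E_\mu(2)}$.

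By Woronowicz's universal description of $\widehat{E_\mu(2)}$ in \cite{Wor4}, a character $f$ is determined by the pair $(f(N), f(b))$, subject to the defining relations between the generators. The key observation is that $b$ and a bounded function of $N$ satisfy a Weyl-type commutation relation, so that applying $f$ yields an identity of the shape $f(b) = \mu^{\pm 1} f(b)$; since $\mu \ne 1$, this forces $f(b) = 0$, exactly parallel to the argument $f(n) = \mu f(n) \Rightarrow f(n) = 0$ used in the previous proof for $E_\mu(2)$. The universal property then identifies the admissible values of $f(N)$ with the spectrum of $N$, which is $\field{Z}$; for each $k \in \field{Z}$ one obtains a unique character $f_k$ with $f_k(N) = k$ and $f_k(b) = 0$.

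The group structure now falls out of the co-multiplication: since $\hat\Gamma(N) = N \otimes 1 + 1 \otimes N$, we have $(f_k \star f_l)(N) = k + l$, while $(f_k \star f_l)(b) = 0$ is automatic from $\hat\Gamma(b) = b \otimes \mu^{N/2} + \mu^{-N/2} \otimes b$ together with $f_k(b) = f_l(b) = 0$. Thus $f_k \mapsto k$ is a group isomorphism $\tilde\G \cong \field{Z}$, and it is a homeomorphism because the target is discrete and any weak-$^*$ convergent net $f_\alpha \to f$ of characters satisfies $f_\alpha(N) \to f(N)$, hence eventual equality in $\field{Z}$.

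The main obstacle will be the careful handling of the unbounded self-adjoint generator $N$ within the universal $C^*$-algebraic framework: one must interpret $f(N)$ via the continuous functional calculus applied to bounded functions such as $\mu^{N/2}$ (which appear explicitly in the relations) and confirm that the Weyl-type commutation relation between $b$ and $\mu^{N/2}$ survives in the universal picture and actually suffices to force $f(b) = 0$. This is precisely the content of Woronowicz's analysis in \cite{Wor4}, so once that universal presentation is invoked the remainder of the argument is, as in the $E_\mu(2)$ case, purely formal.
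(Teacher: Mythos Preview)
Your proposal is correct and follows essentially the same route as the paper: invoke Woronowicz's universal description from \cite{Wor4} to parametrize characters by pairs $(f(N),f(b))$, use the commutation relations to force $f(b)=0$ and $f(N)\in\field Z$, and then read off the additive group law from $\hat\Gamma(N)=N\otimes 1+1\otimes N$ and $\hat\Gamma(b)=b\otimes\mu^{N/2}+\mu^{-N/2}\otimes b$. The paper packages the first two steps into a single appeal to \cite[Theorem~3.1]{Wor4} rather than spelling out the Weyl-type argument, but the substance is identical.
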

\begin{proof} For any $s\in \Z$ we define $\hat f_s\in sp(C_0(\widehat{E_\mu(2)}),\C)$
 by $\hat f_s(b)=0$, $\hat f_s(N)=s$.
Then it is clear from \cite[theorem 3.1]{Wor4} 
that the map $s\mapsto \hat f_s\in sp(C_0(\widehat{E_\mu(2)}))$
is a bi-continuous bijection.
For $s,s'\in \Z$ we have
\begin{eqnarray*}
\hat f_s\star \hat f_{s'}(b)&=&(\hat f_s\otimes \hat f_{s'})\hat\Gamma(b) = 
(\hat f_s\otimes \hat f_{s'})(b\otimes \mu^{\frac{N}{2}}+\mu^{-\frac{N}{2}}\otimes b) \\
&=&  \hat f_s(b)\hat f_{s'}(\mu^{\frac{N}{2}})+\hat f_{s}(\mu^{-\frac{N}{2}})\hat f_{s'}(b) = 0 
= \hat f_{s+s'}(b), \\
\hat f_s\star \hat f_{s'}(N) &=& (\hat f_s\otimes \hat f_{s'})\hat\Gamma(N) =
(\hat f_s\otimes \hat f_{s'})(N\otimes 1+1\otimes N) \\
&=& \hat f_s(N)\hat f_{s'}(1)+\hat f_s(1)\hat f_{s'}(N) = s+s' 
= \hat f_{s+s'}(N).
\end{eqnarray*}
So $\hat f_s\star\hat f_{s'}=\hat f_{s+s'}$.
\end{proof}

\section{Structural Properties of $\G$ encoded by $\tilde\G$}
In this section, we investigate the relation between the structure of
$\G$ and that of $\tilde\G$.

\subsection{Unimodularity of $\G$}
Let $\G$ be a locally compact quantum group.
The main goal of this section (Theorem \ref{336.5}) is to show that if
both $\tilde\G$ and $\tilde{\hat\G}$ are small, then $\G$ is of a
very specific type, namely a unimodular Kac algebra.

In the sequel, for a locally compact group $G$, we denote by $\mathbf Z(G)$
the center of the group $G$.
\begin{proposition}\label{332}
Let $\G$ be a locally compact quantum group.
If $\mathbf Z(Gr(\G))$ is discrete, then $\G$ is unimodular.
\end{proposition}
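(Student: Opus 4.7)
My plan is to show that $\delta^{it} \in \mathbf{Z}(Gr(\G))$ for every $t \in \R$, after which the conclusion follows immediately: the map $\R \ni t \mapsto \delta^{it}$ is continuous into $Gr(\G)$ with the weak$^*$ topology (by the spectral theorem for the strictly positive $\delta$), so its image in the discrete group $\mathbf{Z}(Gr(\G))$ must be a singleton. Since $\delta^0 = 1$, this forces $\delta^{it}=1$ for all $t$, i.e.\ $\delta = 1$.

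Membership $\delta^{it}\in Gr(\G)$ is immediate from the preliminaries, since $\delta^{it}$ is unitary and satisfies $\Gamma(\delta^{it})=\delta^{it}\ot\delta^{it}$. For centrality, I fix $u\in Gr(\G)$ and aim to prove $u\delta u^* = \delta$. The identity $\Gamma(u)=u\ot u$ gives
\[
\Gamma(u^* x u)=(u^*\ot u^*)\Gamma(x)(u\ot u),
\]
and slicing against $\fee$ shows that $\fee\circ Ad(u^*)$ is again left-invariant; by uniqueness of the left Haar weight up to positive scalars, $\fee\circ Ad(u^*)=c_u\fee$ for some $c_u>0$. The analogous computation for $\psi$ gives $\psi\circ Ad(u^*)=d_u\psi$. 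Using that $R(u)=u^*$ on $Gr(\G)$ (as recalled in the paper) together with the fact that $\fee\circ R$ is right-invariant, hence proportional to $\psi$, one deduces $c_u=d_u$.

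Combining these with the defining relation $\psi(x)=\fee(\delta^{1/2} x\delta^{1/2})$, the equality
\[
\fee(\delta^{1/2}u^*xu\,\delta^{1/2}) = \psi(u^*xu) = c_u\psi(x) = c_u\fee(\delta^{1/2}x\delta^{1/2})
\]
together with $\fee(uyu^*)=c_u^{-1}\fee(y)$ (applied to $y=\delta^{1/2}u^*xu\,\delta^{1/2}$) rewrites as
\[
\fee\bigl((u\delta u^*)^{1/2} x (u\delta u^*)^{1/2}\bigr) = \fee(\delta^{1/2} x\delta^{1/2})
\]
on $\M_\psi^+$, using $(u\delta u^*)^{1/2}=u\delta^{1/2}u^*$. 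Uniqueness of the Radon--Nikodym derivative between two normal semifinite faithful weights then forces $u\delta u^*=\delta$, so $u$ commutes with every $\delta^{it}$, giving $\delta^{it}\in\mathbf{Z}(Gr(\G))$ and completing the argument.

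The main technical point I anticipate is the final Radon--Nikodym step: because $\delta$ is unbounded and only affiliated with $M$, the uniqueness must be invoked in the spatial-derivative / Haagerup framework for weights rather than in a bounded setting. Once that is handled, the remaining steps are routine manipulations with the Kustermans--Vaes machinery already recalled in the preliminaries.
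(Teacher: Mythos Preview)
Your argument is correct, and the overall strategy coincides with the paper's: show that $\delta^{it}\in\mathbf Z(Gr(\G))$ for all $t$, then use continuity of $t\mapsto\delta^{it}$ and discreteness of $\mathbf Z(Gr(\G))$ to force $\delta=1$.

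The only difference lies in how the centrality of $\delta^{it}$ is established. The paper simply invokes \cite[Proposition~4.2]{Baaj-Vaes} for this fact, whereas you supply a direct proof: using $\Gamma(u)=u\otimes u$ to see that $\fee\circ Ad(u^*)$ and $\psi\circ Ad(u^*)$ are again Haar weights, applying uniqueness to obtain scaling constants, matching them via $R(u)=u^*$, and then deducing $u\delta u^*=\delta$ from uniqueness of the Radon--Nikodym derivative of $\psi$ with respect to $\fee$. Your self-contained route is valid and arguably more informative; the paper's version has the virtue of brevity. The technical caveat you flag about the Radon--Nikodym step is the right one: since $\delta$ is only affiliated with $\LL$, the uniqueness must be invoked in the weight-theoretic setting (Connes cocycle or Vaes's formulation), but this is standard and the Kustermans--Vaes framework handles it.
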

\begin{proof}
By \cite[Proposition 4.2.]{Baaj-Vaes} we have $\delta^{it} \in \mathbf Z(Gr(\G))$ 
for all $t \in\field{R}$, where $\delta$ is the modular element of $\G$,
and $\mathbf Z(Gr(\G))$ is the center of the intrinsic group $Gr(\G)$.
Since the map
\[
 \R\ni t\mapsto\delta^{it}\in \mathbf Z(Gr(\G))
\]
is continuous, its range must be connected.
But since $\mathbf Z(Gr(\G))$ is discrete, the range must be a single point.
Therefore, we obtain $\delta^{it} = 1$ for all $t \in \field{R}$, which implies $\delta = 1$.
\end{proof}

Combining Proposition \ref{332} with Theorem \ref{emuhat}, we obtain the following.
\begin{corollary}
The quantum group $E_\mu(2)$ is unimodular.
\end{corollary}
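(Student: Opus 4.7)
The plan is to chain together the structural result Proposition \ref{332} with the explicit computation of Theorem \ref{emuhat}, passing between $\G$ and $\hat\G$ via Theorem \ref{317}. The key observation is that Theorem \ref{emuhat} computes $\tilde{\widehat{E_\mu(2)}}$, but Proposition \ref{332} requires knowledge of the intrinsic group of $E_\mu(2)$ itself; Theorem \ref{317} provides the bridge.

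Concretely, I would proceed as follows. First, apply Theorem \ref{317} to the locally compact quantum group $\widehat{E_\mu(2)}$. Using Pontryagin-type biduality for locally compact quantum groups, this yields a group homeomorphism
\[
\tilde{\widehat{E_\mu(2)}} \,\cong\, Gr\bigl(\widehat{\widehat{E_\mu(2)}}\bigr) \,=\, Gr(E_\mu(2)).
\]
Second, invoke Theorem \ref{emuhat}, which identifies the left-hand side with $\field Z$. Combining these two identifications gives $Gr(E_\mu(2)) \cong \field Z$ as topological groups.

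Third, since $\field Z$ is abelian, its center equals itself, so $\mathbf Z(Gr(E_\mu(2))) \cong \field Z$, which is discrete. The hypothesis of Proposition \ref{332} is therefore satisfied, and we conclude that $E_\mu(2)$ is unimodular.

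There is no real obstacle to overcome: the statement is essentially a formal consequence of the two cited results. The only point requiring a moment of care is the direction of duality — one must recognize that Theorem \ref{emuhat}'s computation of $\tilde{\widehat{E_\mu(2)}}$ feeds directly into $Gr(E_\mu(2))$ (via Theorem \ref{317}), not into $Gr(\widehat{E_\mu(2)})$, since it is the intrinsic group of $\G$ itself that controls the modular element in Proposition \ref{332}.
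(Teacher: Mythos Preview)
Your proof is correct and is precisely the argument the paper has in mind: the paper simply writes that the Corollary follows by ``combining Proposition \ref{332} with Theorem \ref{emuhat}'', and you have spelled out the one intermediate step, namely using Theorem \ref{317} (and biduality) to pass from $\tilde{\widehat{E_\mu(2)}}$ to $Gr(E_\mu(2))$. There is no difference in approach.
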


\begin{lemma}\label{333}
Let $\Phi$ and $\Psi$ be weak$^*$ continuous linear maps on
$\LL$. If we have 
$(\iota \otimes \Phi)\circ \Gamma = (\iota \otimes \Psi)\circ \Gamma$ or  
$(\Phi \otimes \iota)\circ \Gamma = (\Psi \otimes \iota)\circ \Gamma$,
then $\Phi = \Psi$.
\end{lemma}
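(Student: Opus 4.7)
The plan is to slice both sides of the given equation against an arbitrary normal functional on the first (respectively, second) tensor factor, reducing the identity between maps $\LL \to \LL\vtp\LL$ to a pointwise identity $\Phi(y) = \Psi(y)$ on a weak$^*$-dense subset of $\LL$, and then invoke weak$^*$ continuity of $\Phi$ and $\Psi$.

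More concretely, assume first that $(\iota \otimes \Phi)\circ \Gamma = (\iota \otimes \Psi)\circ \Gamma$. For any $f \in \LO$ and $x \in \LL$, I would apply the slice map $f \otimes \iota$ to the equality
\[
(\iota \otimes \Phi)\Gamma(x) = (\iota \otimes \Psi)\Gamma(x)
\]
in $\LL \vtp \LL$. Using the Fubini-type commutation of slice maps with weak$^*$-continuous linear maps, the left-hand side equals $\Phi\bigl((f \otimes \iota)\Gamma(x)\bigr)$ and the right-hand side equals $\Psi\bigl((f \otimes \iota)\Gamma(x)\bigr)$. Hence $\Phi$ and $\Psi$ coincide on every element of the form $(f \otimes \iota)\Gamma(x)$ with $f \in \LO$ and $x \in \LL$.

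The decisive ingredient is then the standard density property built into the definition of a locally compact quantum group: the linear span of $\{(f \otimes \iota)\Gamma(x) : f \in \LO,\, x \in \LL\}$ is weak$^*$-dense in $\LL$. Combined with the weak$^*$ continuity of both $\Phi$ and $\Psi$, this yields $\Phi = \Psi$ on all of $\LL$. The symmetric argument, slicing by $\iota \otimes f$ instead and using weak$^*$-density of $\{(\iota \otimes f)\Gamma(x)\}$, handles the second hypothesis.

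I do not anticipate a genuine obstacle here; the only point worth flagging is ensuring that one uses the correct density statement (left slice density for one version of the hypothesis, right slice density for the other), both of which are part of the standard Kustermans--Vaes axioms recalled at the beginning of the Preliminaries section.
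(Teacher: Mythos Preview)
Your proposal is correct and follows essentially the same approach as the paper's proof: slice the identity with $\omega \otimes \iota$ (respectively $\iota \otimes \omega$) to get $\Phi$ and $\Psi$ agreeing on the set $\{(\omega \otimes \iota)\Gamma(x)\}$ (respectively $\{(\iota \otimes \omega)\Gamma(x)\}$), then invoke weak$^*$-density of this set together with weak$^*$ continuity of $\Phi$ and $\Psi$.
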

\begin{proof} Assume that $(\iota \otimes \Phi)\circ \Gamma = 
(\iota \otimes \Psi)\circ \Gamma$.
Then, for all $x \in \LL$ and $\omega \in \LO$, we have
$$\Phi ((\omega \otimes \iota) \Gamma (x)) = \Psi ((\omega \otimes \iota) \Gamma(x)).$$
Since the set $\{\,(\omega \otimes \iota) \Gamma (x) \,:\, \omega \in L^1(\G), x \in \LL\,\}$
is weak$^*$ dense in $\LL$, the conclusion follows.
The argument assuming the second relation is analogous.
\end{proof}

\begin{lemma}\label{334}
If $\delta = 1$ and $\sigma^\fee_{t} = \tau_{t}$ for all $t \in \field{R}$,
then $\tau_{t} = \sigma^\fee_{t} = \iota$ for all $t\in\R$,
and $\G$ is a Kac algebra.
\end{lemma}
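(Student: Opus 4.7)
The strategy is to combine the two expressions for $\Gamma\circ\tau_t$ given by relations (\ref{1.5}) and (\ref{1.6}) in order to extract, via Lemma \ref{333}, an identity that collapses $(\tau_t)$ to the trivial group once $\delta=1$ is used; the Kac algebra conclusion is then immediate from the polar decomposition $S=R\tau_{-i/2}$.

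The main computation proceeds as follows. Combining (\ref{1.5}) and (\ref{1.6}) gives
\[
(\tau_t\otimes\tau_t)\circ\Gamma \;=\; \Gamma\circ\tau_t \;=\; (\sigma^\fee_t\otimes\sigma^\psi_{-t})\circ\Gamma,
\]
and inserting the hypothesis $\sigma^\fee_t=\tau_t$ and then composing both sides on the outside with $\tau_{-t}\otimes\iota$ cancels the first leg, yielding
\[
(\iota\otimes\tau_t)\circ\Gamma \;=\; (\iota\otimes\sigma^\psi_{-t})\circ\Gamma.
\]
Since $\tau_t$ and $\sigma^\psi_{-t}$ are normal automorphisms of $\LL$, Lemma \ref{333} forces $\tau_t=\sigma^\psi_{-t}$ for every $t\in\R$. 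On the other hand, $\delta=1$ collapses the defining relation $\psi(x)=\fee(\delta^{1/2}x\delta^{1/2})$ to $\psi=\fee$, so $\sigma^\psi_t=\sigma^\fee_t=\tau_t$. Combining with the previous identity gives $\tau_t=\tau_{-t}$ for all real $t$, whence $\tau_{2t}=\iota$ and therefore $\tau_t=\sigma^\fee_t=\iota$ for all $t\in\R$.

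To conclude that $\G$ is a Kac algebra I would verify the two conditions of the criterion recalled in the preliminaries. Affiliation of $\delta$ with the center of $M$ is immediate from $\delta=1$. For boundedness of $S$, the polar decomposition $S=R\tau_{-i/2}$ does the job: triviality of the real one-parameter group $(\tau_t)$ forces, by analytic continuation on the core of $\tau$-analytic elements, its complex extension $\tau_{-i/2}$ to be the identity as well, so that $S=R$ is a bounded anti-automorphism of $M$. I expect this analytic-continuation step to be the only point requiring a moment of care; everything else is bookkeeping with the commutation relations.
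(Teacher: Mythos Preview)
Your proof is correct and follows essentially the same route as the paper: combine the two formulas for $\Gamma\circ\tau_t$ from (\ref{1.5}) and (\ref{1.6}), cancel the first tensor leg, invoke Lemma \ref{333}, and use $\delta=1$ to identify $\sigma^\psi$ with $\sigma^\fee=\tau$ so that the resulting relation collapses to $\tau_{2t}=\iota$. The only cosmetic difference is the order in which you feed in the hypotheses---the paper uses $\delta=1$ first to rewrite $(\tau_t\otimes\tau_t)$ as $(\sigma^\fee_t\otimes\sigma^\psi_t)$ and then cancels, whereas you cancel first and bring in $\delta=1$ afterwards---and you spell out the Kac conclusion via $S=R\tau_{-i/2}$ more explicitly than the paper does.
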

\begin{proof} Since $\delta = 1$, we have $\sigma^\psi_{t} = \sigma^\fee_{t}$ 
for all $t\in\R$. 
Moreover, since $\sigma^\fee_{t} = \tau_{t}$, by the 
equations (\ref{1.5}) and (\ref{1.6}), we have
\begin{eqnarray*}
(\sigma^\fee_{t} \otimes \sigma^\psi_{-t})\circ \Gamma = \Gamma \circ\tau_{t}
= (\tau_{t} \otimes \tau_{t})\circ \Gamma 
= (\sigma^\fee_{t} \otimes \sigma^\psi_{t})\circ \Gamma,
\end{eqnarray*}
which implies that $(\iota \otimes \sigma^\psi_{-t}) \circ\Gamma 
= (\iota \otimes \sigma^\psi_{t})\circ \Gamma$
for all $t\in\R$.
Now, Lemma \ref{333} yields $\sigma^\psi_{-t} = \sigma^\psi_{t}$,
i.e., $\sigma^\psi_{2t} = \iota$, for all $t \in \field{R}$.
Hence, $\tau_{t} = \sigma^\fee_{t} = \sigma^\psi_{t} = \iota$ for all $t \in \field{R}$,
and therefore $\G$ is a Kac algebra.
\end{proof}

\begin{proposition}\label{335}
If $\G$ and $\hat{\G}$ are both unimodular, then $\G$ is a
Kac algebra.
\end{proposition}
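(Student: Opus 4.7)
The plan is to combine the two unimodularity hypotheses with the identity (\ref{126}) in order to force the modular automorphism group of $\fee$ to coincide with the scaling group $\tau$, and then invoke Lemma \ref{334} to finish.

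First I would use the hypothesis that $\G$ is unimodular, so that $\delta = 1$. This is the first ingredient needed to apply Lemma \ref{334}. Next, I would apply the identity (\ref{126}) to the dual quantum group $\hat\G$ in place of $\G$: using the Pontryagin-type involution $\hat{\hat\G} = \G$ together with $P = \hat P$, this reads
\[
\Delta_\fee^{it} \;=\; \hat P^{it}\, J_{\hat\fee}\, \hat\delta^{it}\, J_{\hat\fee} \;=\; P^{it}\, J_{\hat\fee}\, \hat\delta^{it}\, J_{\hat\fee}.
\]
Since $\hat\G$ is also assumed unimodular, $\hat\delta = 1$, and the right-hand side collapses to $P^{it}$. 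Thus $\Delta_\fee^{it} = P^{it}$ for all $t\in\R$.

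Now, modular theory gives $\sigma^\fee_t(x) = \Delta_\fee^{it} x \Delta_\fee^{-it}$ on $\LL$, while the scaling group is implemented as $\tau_t(x) = P^{it} x P^{-it}$ (as recalled in the Preliminaries). Because these two one-parameter groups of automorphisms of $\LL$ are implemented by the same unitaries, they coincide: $\sigma^\fee_t = \tau_t$ for all $t\in\R$. Together with $\delta = 1$ obtained in the first step, this places us exactly in the hypotheses of Lemma \ref{334}, which immediately yields $\tau_t = \sigma^\fee_t = \iota$ for all $t$ and hence that $\G$ is a Kac algebra.

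There is no real obstacle here beyond being careful with the self-dual statement of (\ref{126}): the work is entirely contained in observing that applying this identity both to $\G$ and to $\hat\G$, and exploiting $P = \hat P$, makes the two modular data collapse onto the scaling operator $P$, at which point Lemma \ref{334} does the rest.
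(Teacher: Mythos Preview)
Your proof is correct and follows exactly the same route as the paper: apply the dual version of (\ref{126}) together with $\hat\delta = 1$ to obtain $\Delta_\fee^{it} = P^{it}$, deduce $\sigma^\fee_t = \tau_t$, and then invoke Lemma \ref{334} along with $\delta = 1$. The only difference is cosmetic: the paper states the first step in one line, whereas you spell out explicitly that one is applying (\ref{126}) to $\hat\G$ and using $P = \hat P$.
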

\begin{proof} Since $\hat{\delta} = 1$, 
equations (\ref{126}) imply that $P^{it} = \Delta_\fee^{it}$,
whence $\sigma^\fee_{t} = \tau_{t}$
for all $t \in \field{R}$. Since, in addition, $\delta = 1$,
Lemma \ref{334} yields the claim.
\end{proof}

In particular, combining Propositions \ref{332} and \ref{335}, we see that
for a unimodular locally compact quantum group $\G$,
the smallness of the group $\tilde{\G}$ forces the
quantum group $\G$ to be of Kac type.
\begin{corollary}\label{336}
Let $\G$ be a unimodular locally compact quantum group. If $\mathbf Z(\tilde{\G})$
is discrete, then $\G$ is a Kac algebra.
\end{corollary}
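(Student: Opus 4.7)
The plan is to reduce this to Propositions \ref{332} and \ref{335}, using the identification in Theorem \ref{322}. The key observation is that $\tilde{\G}$ is, by Theorem \ref{322}, homeomorphically isomorphic to $Gr(\hat{\G})$, so $\mathbf{Z}(\tilde{\G})$ discrete translates immediately into $\mathbf{Z}(Gr(\hat{\G}))$ discrete.

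Concretely, I would first apply Theorem \ref{322} to transport the hypothesis on $\mathbf{Z}(\tilde{\G})$ to a hypothesis on $\mathbf{Z}(Gr(\hat{\G}))$. Then I would apply Proposition \ref{332} to the dual quantum group $\hat{\G}$: discreteness of $\mathbf{Z}(Gr(\hat{\G}))$ forces the modular element $\hat{\delta}$ of $\hat{\G}$ to satisfy $\hat{\delta}^{it} = 1$ for all $t \in \field{R}$, i.e., $\hat{\G}$ is unimodular. Since $\G$ itself is unimodular by hypothesis, both $\G$ and $\hat{\G}$ are unimodular, and Proposition \ref{335} concludes that $\G$ is a Kac algebra.

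There is essentially no obstacle here: the corollary is a direct synthesis of three results already established in the paper. The only thing to double-check is that Proposition \ref{332} is stated for an arbitrary locally compact quantum group (so that it can legitimately be applied to $\hat{\G}$ rather than only to $\G$), which it is. Thus the proof reduces to one line invoking these three results in sequence.
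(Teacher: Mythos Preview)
Your proposal is correct and follows essentially the same route as the paper: the corollary is indeed obtained by combining Proposition~\ref{332} (applied to $\hat\G$, after identifying $\tilde\G$ with $Gr(\hat\G)$ via Theorem~\ref{322}) with Proposition~\ref{335}. The paper does not even spell out a separate proof, merely noting in the preceding sentence that Propositions~\ref{332} and~\ref{335} combine to give the result.
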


Since every compact quantum group is unimodular, we obtain the following
interesting result.
\begin{corollary}\label{336.01}
Let $\G$ be a compact quantum group. 
If $\mathbf Z(\tilde\G)$ is discrete, then $\G$ is a Kac algebra.
\end{corollary}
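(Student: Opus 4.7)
The plan is essentially to reduce this corollary to the already-established Corollary \ref{336}, by invoking the standard fact that every compact quantum group is automatically unimodular. Once unimodularity is in hand, the hypothesis that $\mathbf Z(\tilde\G)$ is discrete puts us exactly in the setting of Corollary \ref{336}, which delivers the conclusion.

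First I would recall why a compact locally compact quantum group $\G$ must be unimodular. For a compact quantum group, the left Haar weight $\fee$ is finite and (up to normalization) coincides with the right Haar weight $\psi$, since on a compact quantum group the Haar state is both left and right invariant. In the language of Kustermans--Vaes this is encoded by the fact that the modular element $\delta$, which implements the Radon--Nikod\'ym derivative between $\fee$ and $\psi$ via $\psi(x)=\fee(\delta^{1/2} x \delta^{1/2})$, must then be a scalar; combined with $\Gamma(\delta^{it})=\delta^{it}\otimes\delta^{it}$ and positivity, this forces $\delta=1$. Hence $\G$ is unimodular in the sense used in the preliminaries.

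With $\G$ compact, hence unimodular, and with $\mathbf Z(\tilde\G)$ assumed discrete by hypothesis, Corollary \ref{336} applies verbatim and yields that $\G$ is a Kac algebra. There is essentially no new content to prove beyond assembling these two ingredients, so I expect no real obstacle; the only point requiring any care is to cite (or sketch) the unimodularity of compact quantum groups cleanly, rather than reproving it from scratch. I would simply reference \cite{K-V} for the equality $\delta = 1$ in the compact case and then invoke Corollary \ref{336}.
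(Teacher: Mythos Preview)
Your proposal is correct and matches the paper's approach exactly: the paper simply observes that every compact quantum group is unimodular and then invokes Corollary~\ref{336}. Your additional sketch of why $\delta=1$ in the compact case is fine but more detail than the paper provides.
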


The class of non-Kac compact quantum groups is one of the most
important and well-known classes of non-classical quantum
groups. Some important examples of such objects are deformations
of compact Lie groups, such as Woronowicz's famous
$SU_{\mu}(2)$, which we have discussed in Section \ref{sumu}. 
This shows the significance of our Corollary \ref{336.01}:

there is some richness of classical information in these classes of quantum structures!
\begin{theorem}\label{336.5}
 Let $\G$ be a locally compact quantum group. 
 If $\mathbf Z(\tilde\G)$ and $\mathbf Z(\tilde{\hat\G})$ are both
discrete, then $\G$ is a unimodular Kac algebra.
\end{theorem}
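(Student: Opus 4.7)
The plan is to assemble Theorem \ref{336.5} essentially as a corollary of the identifications in Theorem \ref{322} together with Propositions \ref{332} and \ref{335}; no new analytic ingredient should be needed. The key observation is that the two smallness hypotheses, one on $\tilde\G$ and one on $\tilde{\hat\G}$, translate under Theorem \ref{322} into smallness of $\mathbf Z(Gr(\hat\G))$ and $\mathbf Z(Gr(\G))$ respectively, and each of these controls modularity on one side of the duality.

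First I would record the two instances of Theorem \ref{322} that I need: as locally compact groups,
\[
\tilde\G\cong Gr(\hat\G)\quad\text{and}\quad \tilde{\hat\G}\cong Gr(\G).
\]
Both identifications are group homeomorphisms, so they carry centers to centers; in particular, $\mathbf Z(\tilde\G)\cong \mathbf Z(Gr(\hat\G))$ and $\mathbf Z(\tilde{\hat\G})\cong \mathbf Z(Gr(\G))$ as topological groups. Consequently, the hypothesis that $\mathbf Z(\tilde\G)$ and $\mathbf Z(\tilde{\hat\G})$ are discrete is equivalent to the statement that $\mathbf Z(Gr(\hat\G))$ and $\mathbf Z(Gr(\G))$ are both discrete.

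Next I would apply Proposition \ref{332} twice. Applied to $\G$, discreteness of $\mathbf Z(Gr(\G))$ forces $\delta=1$, i.e., $\G$ is unimodular. Applied to $\hat\G$, discreteness of $\mathbf Z(Gr(\hat\G))$ forces $\hat\delta=1$, i.e., $\hat\G$ is unimodular. Here I am using that Proposition \ref{332} is stated for an arbitrary locally compact quantum group and may be invoked on the dual as well.

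Finally, with $\G$ and $\hat\G$ both unimodular, Proposition \ref{335} yields that $\G$ is a Kac algebra, and we already have unimodularity of $\G$ from the previous step. This completes the proof. There is no real obstacle to overcome: the content is entirely carried by Theorem \ref{322} (the translation between $\tilde{}\,$-groups and intrinsic groups) and by the earlier Propositions \ref{332} and \ref{335}; the only point to be careful about is that the homeomorphisms of Theorem \ref{322} are group homeomorphisms, so passing to centers is legitimate, and that Proposition \ref{332} applies verbatim to the dual quantum group.
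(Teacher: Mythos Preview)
Your proposal is correct and follows essentially the same route as the paper. The only cosmetic difference is that the paper packages one half of your argument via Corollary~\ref{336} (which itself is just Proposition~\ref{332} applied to the dual followed by Proposition~\ref{335}), whereas you invoke Propositions~\ref{332} and~\ref{335} directly; the underlying logic is identical.
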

\begin{proof} Since $\mathbf Z(\tilde{\G})$ is discrete, 
$\hat{\G}$ is unimodular by Proposition \ref{332},
and hence our assertion follows from Corollary \ref{336}, applied to $\hat\G$.
\end{proof}

\subsection{Traciality of the Haar Weights}
There is a strong connection between the group $\tilde{\G}$,
assigned to $\G$, and traciality of the Haar weights, especially
in the Kac algebra case. Since the scaling group $\tau_t$ in this case
is trivial, equation (\ref{1.5}) implies
\begin{equation}\label{e337}
\Gamma \sigma^\fee_{t} = (\iota \otimes \sigma^\fee_{t})\Gamma \ \ \forall t\in\R,
\end{equation}
and so $\sigma^\fee_{t} \in {\mathcal{C}}\B^\sigma_{cov} (\LL)$.
Thus, we obtain $\Delta_\fee^{it} \in
Gr(\hat{\G})$ for all $t\in\R$ (this was also proved in \cite[Corollary 2.4]{De-Cann1}).
Hence, we see that if $\G$ is a Kac algebra such that $\tilde\G$
 is trivial, then $\fee$ is tracial.
Similarly to Proposition \ref{332}, the fact that the map $t \mapsto \Delta_\fee^{it}$ is
continuous, allows us to further generalize this result.
\begin{proposition}\label{339}
Let $\G$ be a Kac algebra. If $\tilde{\G}$ is discrete, then
$\varphi$ is tracial.
\end{proposition}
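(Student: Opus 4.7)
The plan is to mimic the proof of Proposition \ref{332} (on $\delta$) but applied to the modular operator $\Delta_\fee$ on the dual side, exploiting the continuous one-parameter family $t \mapsto \Delta_\fee^{it}$ sitting inside $Gr(\hat\G)$.

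First I would invoke Theorem \ref{317} to translate the hypothesis: since $\tilde\G \cong Gr(\hat\G)$ as topological groups, the discreteness of $\tilde\G$ is equivalent to the discreteness of $Gr(\hat\G)$ in the weak$^*$ topology. Next I would quote the observation recorded immediately before the statement: because $\G$ is a Kac algebra we have $\tau_t = \iota$, so equation (\ref{e337}) shows that $\sigma^\fee_t$ is covariant, and it follows from the second equality in (\ref{126}) (noting that $\delta = 1$ in a Kac algebra) that $\Delta_\fee^{it} \in Gr(\hat\G)$ for every $t \in \R$.

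Now consider the map $\R \ni t \mapsto \Delta_\fee^{it} \in Gr(\hat\G)$. This map is strongly continuous on $L^2(\G)$, hence weak$^*$ continuous as a map into $L^\infty(\hat\G)$. Since $\R$ is connected and $Gr(\hat\G)$ is discrete, the image must reduce to a single point; evaluating at $t=0$ forces $\Delta_\fee^{it} = 1$ for all $t \in \R$. Therefore $\Delta_\fee = 1$, which by standard Tomita--Takesaki theory is equivalent to $\sigma^\fee_t = \iota$ for every $t$, i.e., $\fee$ is a trace.

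I do not expect any genuine obstacle here: the substance is entirely contained in the preceding paragraph of the excerpt, and the argument is the direct analogue of Proposition \ref{332} with $(\delta^{it},\mathbf{Z}(Gr(\G)))$ replaced by $(\Delta_\fee^{it}, Gr(\hat\G))$. The only point to handle carefully is continuity of $t \mapsto \Delta_\fee^{it}$ into $Gr(\hat\G)$ equipped with the weak$^*$ topology, but this is immediate from strong continuity of the modular group on the GNS Hilbert space.
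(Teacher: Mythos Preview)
Your proposal is correct and follows essentially the same route as the paper: the paper's proof is just the sentence preceding the proposition, namely that $t\mapsto\Delta_\fee^{it}$ is continuous into $Gr(\hat\G)$, so the argument of Proposition~\ref{332} applies verbatim with $(\delta^{it},\mathbf Z(Gr(\G)))$ replaced by $(\Delta_\fee^{it},Gr(\hat\G))$.

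One small correction: your parenthetical ``noting that $\delta=1$ in a Kac algebra'' is false in general (e.g.\ $\G_a$ for a non-unimodular group), and the appeal to equation~(\ref{126}) is unnecessary. The membership $\Delta_\fee^{it}\in Gr(\hat\G)$ follows directly from the covariance of $\sigma^\fee_t$ via Theorem~\ref{317}: the unitary $\hat u$ implementing $\sigma^\fee_t$ on $\Lambda_\fee(\N_\fee)$ is precisely $\Delta_\fee^{it}$. Drop that clause and your write-up matches the paper's argument exactly.
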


We have seen in Corollary \ref{336} that for a
 unimodular locally compact quantum group $\G$, the
smallness of $\tilde{\G}$ forces the quantum group to be a Kac
algebra. The situation is similar for traciality.
\begin{proposition}\label{341}
Let $\G$ be a locally compact quantum group with tracial Haar weight. If
$Gr(\G)$ is discrete, then $\G$ is a Kac algebra.
\end{proposition}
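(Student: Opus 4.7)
The plan is to verify the two defining properties of a Kac algebra stated in the preliminaries: boundedness of the antipode $S$ and affiliation of the modular element $\delta$ with $\mathbf{Z}(M)$. Traciality of $\varphi$ will take care of the first, and discreteness of $Gr(\G)$ of the second.

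First I would extract from traciality of $\varphi$ that the modular automorphism group is trivial, i.e., $\sigma^\fee_t = \iota$ for all $t \in \R$. Plugging this into the second identity in (\ref{1.5}) gives
\[
\Gamma = \Gamma \circ \sigma^\fee_t = (\tau_t \otimes \sigma^\fee_t)\circ \Gamma = (\tau_t \otimes \iota)\circ \Gamma,
\]
so that $(\tau_t \otimes \iota)\circ \Gamma = (\iota \otimes \iota)\circ \Gamma$. Lemma \ref{333} (in the form using slicing on the first leg) then forces $\tau_t = \iota$ for every $t \in \R$. Consequently $\tau_{-i/2} = \iota$ extends to the identity on its natural domain, and the polar decomposition $S = R\tau_{-i/2}$ collapses to $S = R$; since $R$ is an anti-automorphism of $\LL$, the antipode is bounded.

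For the second condition, I would invoke the argument already used in Proposition \ref{332}: by \cite[Proposition 4.2]{Baaj-Vaes} one has $\delta^{it} \in \mathbf{Z}(Gr(\G)) \subseteq Gr(\G)$ for all $t \in \R$, and the map $t \mapsto \delta^{it}$ is continuous. Discreteness of $Gr(\G)$ makes any continuous image of the connected group $\R$ in $Gr(\G)$ a single point, so $\delta^{it} = 1$ for all $t$, hence $\delta = 1$. In particular $\delta$ is (trivially) affiliated with $\mathbf{Z}(M)$, and combined with the previous paragraph this shows $\G$ is a Kac algebra.

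The only step requiring some care is the passage from $(\tau_t \otimes \iota)\circ \Gamma = \Gamma$ to $\tau_t = \iota$; this is exactly the content of Lemma \ref{333}, whose proof relies on the weak$^*$ density of the slices $\{(\iota \otimes \omega)\Gamma(x) : \omega \in \LO,\ x\in \LL\}$ in $\LL$, a standard fact in the theory of locally compact quantum groups. The rest is assembly of results already established in the paper.
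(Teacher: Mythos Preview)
Your proof is correct and follows essentially the same route as the paper: use traciality to get $\sigma^\fee_t=\iota$, combine with (\ref{1.5}) to obtain $(\tau_t\otimes\iota)\circ\Gamma=\Gamma$, apply Lemma \ref{333} to deduce $\tau_t=\iota$, and use discreteness of $Gr(\G)$ (via Proposition \ref{332}) to get $\delta=1$. The paper's version is simply more terse, concluding ``$\G$ is a Kac algebra'' directly once $\delta=1$ and $\tau=\iota$ are established, without spelling out the polar decomposition of $S$.
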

\begin{proof} Since $Gr(\G)$ is discrete, we have $\delta = 1$. Also,
traciality of $\varphi$ implies that
$$\Gamma = \Gamma\sigma^\fee_t = (\tau_t \otimes \sigma^\fee_t)\Gamma
 = (\tau_t \otimes\iota)\Gamma,$$
for all $t\in\R$, which implies that $\tau = \iota$, by Lemma \ref{333}.
Hence, $\G$ is a Kac algebra.
\end{proof}
Moreover, by combining \cite[Proposition 6.1.2]{E-S} with Theorem \ref{336.5}, 
we obtain a stronger version of the latter.
\begin{theorem}
Let $\G$ be a locally compact quantum group. 
If $\mathbf Z(\tilde\G)$ and $\mathbf Z(\tilde{\hat\G})$ are both
discrete, then $\G$ is a unimodular Kac algebra with tracial Haar weight.
\end{theorem}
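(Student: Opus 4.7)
The plan is to combine Theorem \ref{336.5} with a continuity argument of the type already used in Proposition \ref{332}, now applied to the modular operator of $\fee$ rather than the modular element $\delta$.

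First I would invoke Theorem \ref{336.5}: since $\mathbf Z(\tilde\G)$ and $\mathbf Z(\tilde{\hat\G})$ are both discrete, the theorem already yields that $\G$ is a unimodular Kac algebra. Thus only the traciality of the left Haar weight $\fee$ remains to be established.

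Next, since $\G$ is now known to be a Kac algebra, the scaling group is trivial, $\tau_t = \iota$ for all $t \in \R$, so equation (\ref{1.5}) collapses to $\Gamma \circ \sigma^\fee_t = (\iota \otimes \sigma^\fee_t)\circ\Gamma$. This is precisely the covariance relation (\ref{e337}) used right before Proposition \ref{339}, and it implies, exactly as noted there, that $\Delta_\fee^{it} \in Gr(\hat\G)$ for every $t \in \R$. Invoking \cite[Proposition 6.1.2]{E-S} as the excerpt indicates, one refines this to $\Delta_\fee^{it} \in \mathbf Z(Gr(\hat\G))$ for every $t \in \R$.

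Finally, by the identifications in Theorem \ref{322}, $Gr(\hat\G)$ is topologically isomorphic to $\tilde\G$, so $\mathbf Z(Gr(\hat\G))$ corresponds to $\mathbf Z(\tilde\G)$, which is discrete by hypothesis. Now I would repeat the argument from Proposition \ref{332}: the map $\R \ni t \mapsto \Delta_\fee^{it} \in \mathbf Z(\tilde\G)$ is continuous, its image is connected, and a connected subset of a discrete space must be a singleton. Since the image contains $1$ (at $t=0$), we conclude $\Delta_\fee^{it} = 1$ for all $t$, hence $\Delta_\fee = 1$, i.e.\ $\fee$ is tracial. Combined with the first step, this gives a unimodular Kac algebra with tracial Haar weight.

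The only nontrivial ingredient is the centrality statement $\Delta_\fee^{it} \in \mathbf Z(Gr(\hat\G))$, which is quoted from \cite[Proposition 6.1.2]{E-S} as a black box; everything else is a direct application of results already proved in the paper. There is no real analytic obstacle once that quotation is accepted, which is presumably why the authors present this strengthening as an immediate consequence of Theorem \ref{336.5}.
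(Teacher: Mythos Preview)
Your argument reaches the right conclusion, but it takes a detour that the paper avoids, and your reading of \cite[Proposition 6.1.2]{E-S} is almost certainly not what that reference actually says. The paper's proof is a bare two-step combination: Theorem \ref{336.5} gives that $\G$ is a unimodular Kac algebra, and \cite[Proposition 6.1.2]{E-S} is the Enock--Schwartz result that, for Kac algebras, unimodularity is \emph{equivalent} to traciality of the Haar weight. So traciality drops out immediately, with no second appeal to the discreteness hypotheses and no continuity argument for $\Delta_\fee^{it}$.

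Your route---extracting only a centrality statement $\Delta_\fee^{it}\in\mathbf Z(Gr(\hat\G))$ from the reference and then rerunning the connected-image-in-a-discrete-group argument---is logically sound (the centrality claim is true, indeed trivially so once one knows $\Delta_\fee=1$), but it is redundant. It is also worth noting that traciality is in fact already buried in the proof of Theorem \ref{336.5} itself: that proof passes through Proposition \ref{335} and Lemma \ref{334}, and the conclusion of Lemma \ref{334} explicitly includes $\sigma^\fee_t=\iota$ for all $t$, which is precisely the statement that $\fee$ is a trace. So the final theorem is really just making explicit a consequence that the earlier chain of lemmas had already established; the citation to \cite{E-S} packages this cleanly rather than supplying a genuinely new ingredient.
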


\subsection{Amenability}
In the last part of this section we investigate the question of whether
amenability passes from $\G$ to $\tilde\G$. 
In the following, for a locally compact quantum group $\G$,
we denote by $N_\G$ the sub von Neumann algebra of $\LL$ generated by 
the intrinsic group $Gr(\G)$.
\begin{proposition}\label{343}
Let $\G$ be a locally compact quantum group such that 
the restriction $\fee_{|N_\G}$ is
semi-finite. Then we have the identification
\[
  N_\G \cong VN(Gr(\G)).
\]
\end{proposition}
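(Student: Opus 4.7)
The plan is to endow $N_\G$ with the structure of a co-commutative locally compact quantum group by restricting the structures on $\G$, and then identify it with $VN(Gr(\G))$ via the classical correspondence between co-commutative locally compact quantum groups and locally compact groups.

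First, I would verify that the co-multiplication $\Gamma$ restricts to $N_\G$: for each $u \in Gr(\G)$ we have $\Gamma(u) = u \otimes u \in N_\G \vtp N_\G$, and since $N_\G$ is the $w^*$-closure of the $^*$-algebra $\A$ generated by $Gr(\G)$ and $\Gamma$ is normal, it follows that $\Gamma(N_\G) \subseteq N_\G \vtp N_\G$. The same density argument shows that $\Gamma|_{N_\G}$ is co-commutative, since $\Sigma \Gamma = \Gamma$ already holds on the generating set $Gr(\G)$. By hypothesis, $\fee|_{N_\G}$ is a faithful, normal, semi-finite weight on $N_\G$, and its left-invariance on $(N_\G, \Gamma|_{N_\G})$ is inherited directly from that of $\fee$ on $\G$; by co-commutativity, it is then automatically a right Haar weight as well. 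Since $R(u) = u^*$ and $\tau_t(u) = u$ for every $u \in Gr(\G)$ (as noted before Proposition \ref{319}), the unitary antipode $R$ and scaling group $\tau_t$ of $\G$ also preserve $N_\G$, supplying the remaining antipode data. Altogether, $(N_\G, \Gamma|_{N_\G}, \fee|_{N_\G})$ becomes a co-commutative locally compact quantum group in the sense of Kustermans--Vaes.

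By the classical structure theorem, any such object is of the form $(VN(H), \Gamma_H, \fee_H)$ for a unique locally compact group $H$. To identify $H$ I would compare the intrinsic groups on the two sides: on one hand, $Gr(N_\G, \Gamma|_{N_\G}) = Gr(\G)$, because any invertible $x \in N_\G \subseteq \LL$ with $\Gamma(x) = x \otimes x$ lies in $Gr(\G)$ by definition, and conversely $Gr(\G) \subseteq N_\G$; on the other hand, the intrinsic group of $(VN(H), \Gamma_H)$ is $H$ itself via $h \mapsto \lambda_h$. Hence $H \cong Gr(\G)$, and consequently $N_\G \cong VN(Gr(\G))$.

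The main obstacle I expect is the careful verification of the full Kustermans--Vaes axioms for $(N_\G, \Gamma|_{N_\G}, \fee|_{N_\G})$---in particular, checking that the semi-finiteness hypothesis on $\fee|_{N_\G}$ really suffices to set up all the ancillary objects (the polar decomposition of the restricted antipode, modular data, scaling constant, etc.) consistently on $N_\G$. Once this is settled, the classification of co-commutative locally compact quantum groups yields the conclusion immediately.
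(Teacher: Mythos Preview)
Your proposal is correct and follows essentially the same strategy as the paper: restrict $\Gamma$ and $\fee$ to $N_\G$, observe that the resulting structure is a co-commutative locally compact quantum group, and invoke the classification of such objects (the paper cites Takesaki's theorem directly). The only cosmetic differences are that the paper obtains the right Haar weight as $\fee\circ R$ rather than by appealing to co-commutativity, and it leaves the identification $H\cong Gr(\G)$ implicit in the citation, whereas you spell it out via the intrinsic group; neither constitutes a genuinely different route.
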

\begin{proof} It is obvious that the restriction of
$\Gamma$ to $N_\G$ defines a co-multiplication on $N_\G$,
and by our assumption, on $N_\G$, the restriction of 
$\varphi$ to $N_\G$ is an n.s.f. left
invariant weight on $N_\G$.
Also, since $R(v) = v^*$ for all $v \in Gr(\G)$,
we have $R(N_\G) \subseteq N_\G$, which implies that $\varphi \circ R$
defines a right Haar weight on $N_\G$. Therefore, $N_\G$ can be given
a locally compact quantum group structure. Obviously, it is co-commutative, and so
$N_\G \,\cong\, V N (G r (\G))$, by \cite[Theorem 2]{Tak}.
\end{proof}

\begin{corollary}\label{346}
If $\G$ is a compact quantum group, then
$N_\G \cong VN(Gr(\G))$.
\end{corollary}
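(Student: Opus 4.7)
My plan is to deduce Corollary \ref{346} as an immediate application of Proposition \ref{343}. The only hypothesis of that proposition that needs verification is that the restriction $\fee|_{N_\G}$ is semi-finite; once this is established, the identification $N_\G \cong VN(Gr(\G))$ is already provided by Proposition \ref{343}.

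The key observation is that for a compact quantum group $\G$, the left Haar weight $\fee$ is (up to normalization) a state on $\LL$, i.e., $\fee(1) < \infty$. This is the standard characterization of compactness in the locally compact quantum group setting: $\G$ is compact precisely when the left invariant weight is finite, in which case one normalizes to $\fee(1) = 1$.

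Given this, I would observe that the restriction $\fee|_{N_\G}$ to the sub-von Neumann algebra $N_\G \subseteq \LL$ is then itself a (normal, faithful) finite positive functional on $N_\G$. Any such finite weight is trivially semi-finite, since $\mathcal{M}_{\fee|_{N_\G}}^+ = N_\G^+$, which is weak$^*$-dense in $N_\G^+$. Hence the hypothesis of Proposition \ref{343} is satisfied.

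Applying Proposition \ref{343} directly yields $N_\G \cong VN(Gr(\G))$, which completes the proof. There is no real obstacle here: the corollary is essentially a one-line verification that the hypothesis of Proposition \ref{343} is automatic in the compact case, because finiteness of the Haar weight makes every restriction trivially semi-finite.
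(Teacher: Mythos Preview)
Your proposal is correct and follows precisely the intended route: the paper states the corollary without proof immediately after Proposition~\ref{343}, the point being that for a compact quantum group the Haar weight $\fee$ is finite (a state), so its restriction to $N_\G$ is trivially semi-finite and Proposition~\ref{343} applies.
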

\begin{theorem}\label{347}
Let $\G$ be a discrete quantum group. If $\G$ is amenable, then
so is the group $\tilde{\G}$.
\end{theorem}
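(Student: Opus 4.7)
The plan is to reduce amenability of the discrete group $\tilde\G$ to injectivity of $VN(\tilde\G)$ (Connes' theorem), and then transport injectivity down from $\LLL$ via a suitable conditional expectation. First I would assemble the identifications already in place: Theorem \ref{322} gives $\tilde\G \cong Gr(\hat\G)$, Theorem \ref{323} gives that $\tilde\G$ is discrete (since $\G$ is discrete, $\hat\G$ is compact), and Corollary \ref{346} applied to the compact quantum group $\hat\G$ yields $N_{\hat\G} \cong VN(Gr(\hat\G)) = VN(\tilde\G)$. By Connes' theorem on hyperfiniteness, amenability of the discrete group $\tilde\G$ is equivalent to injectivity of $VN(\tilde\G)$, so the task reduces to proving that $N_{\hat\G}$ is an injective von Neumann algebra.

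Next I would invoke the known fact (Ruan / Desmedt--Quaegebeur--Vaes) that amenability of $\G$ implies injectivity of $\LLL$. Injectivity passes to the range of any normal conditional expectation, so it suffices to produce a normal $\hat\fee$-preserving conditional expectation $E\colon \LLL \to N_{\hat\G}$. By Takesaki's theorem, such a conditional expectation exists precisely when $N_{\hat\G}$ is globally invariant under the modular automorphism group $\sigma^{\hat\fee}_t$.

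The main technical point is to verify this modular invariance, and it is where I expect the bulk of the work to lie. Fix $v \in Gr(\hat\G)$. Since $\tau_t(v) = v$ (as noted in the paper just before Proposition \ref{319}), combining the two identities from (\ref{1.5}) and (\ref{1.6}) applied to $\hat\G$ yields
\[
v \otimes \sigma^{\hat\fee}_t(v) = (\tau_t \otimes \sigma^{\hat\fee}_t)\hat\Gamma(v) = \hat\Gamma(\sigma^{\hat\fee}_t(v)) = (\sigma^{\hat\fee}_t \otimes \sigma^{\hat\psi}_{-t})\hat\Gamma(v) = \sigma^{\hat\fee}_t(v) \otimes \sigma^{\hat\psi}_{-t}(v).
\]
Slicing both sides against a normal functional non-vanishing on $v$ (respectively on $\sigma^{\hat\fee}_t(v)$) forces $\sigma^{\hat\fee}_t(v) = \lambda_t v$ for some $\lambda_t \in \T$. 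Hence $\sigma^{\hat\fee}_t$ sends each generator of $N_{\hat\G}$ into $\C v \subseteq N_{\hat\G}$, and since $\sigma^{\hat\fee}_t$ is a normal $^*$-automorphism, this extends to global invariance $\sigma^{\hat\fee}_t(N_{\hat\G}) \subseteq N_{\hat\G}$.

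Takesaki's theorem then produces the desired normal conditional expectation, $N_{\hat\G}$ inherits injectivity from $\LLL$, and therefore $VN(\tilde\G)$ is injective. As $\tilde\G$ is discrete, Connes' theorem delivers the amenability of $\tilde\G$. The only potentially subtle point beyond the modular argument is a precise citation for ``$\G$ amenable $\Rightarrow \LLL$ injective''; if a direct appeal is undesirable, one can instead exploit that $\hat\G$ is compact and co-amenable (an equivalent formulation for discrete $\G$), from which injectivity of $\LLL$ follows via the Haar state and the existence of an approximate unit for $L^1(\hat\G)$.
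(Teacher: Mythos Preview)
Your proposal is correct and follows essentially the same route as the paper: obtain injectivity of $\LLL$ from amenability of $\G$ (the paper does this concretely by building the conditional expectation $(F\ot\id)\circ\Gamma$ from an invariant mean $F$, rather than citing the implication), then descend to $N_{\hat\G}\cong VN(\tilde\G)$ via Takesaki's theorem using $\sigma^{\hat\fee}_t(v)\in\C v$ for $v\in Gr(\hat\G)$, and finish with Connes. One minor slip in your display: the identity from (\ref{1.6}) reads $\hat\Gamma\circ\hat\tau_t=(\sigma^{\hat\fee}_t\ot\sigma^{\hat\psi}_{-t})\circ\hat\Gamma$, so the middle term should be $\hat\Gamma(\hat\tau_t(v))=\hat\Gamma(v)=v\ot v$ rather than $\hat\Gamma(\sigma^{\hat\fee}_t(v))$; slicing the resulting equality $v\ot v=\sigma^{\hat\fee}_t(v)\ot\sigma^{\hat\psi}_{-t}(v)$ still yields your conclusion.
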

\begin{proof}
If $F\in\LL^*$ is an invariant mean on $\G$, then one can see that 
the map $(F\ot\id)\circ\Gamma : \B(\LT)\ra\B(\LT)$
is a conditional expectation on $\LLL$. Also,
 since $\sigma^{\hat\fee}_t(\hat x)\in\C \hat x$ for all $t\in\R$ and $\hat x\in Gr(\hat\G)$,
we have $\sigma_t^{\hat\fee}(N_{\hat\G})\subseteq N_{\hat\G}$, 
and therefore, by \cite[Theorem 4.2]{tak2}, there
exists a conditional expectation $E:\LLL\ra N_{\hat\G}$. Hence, $E\circ (F\ot\id)\circ\Gamma$
is a conditional expectation from $\B(\LT)$ on $N_{\hat\G} \cong VN(\tilde\G)$, and so
$VN(\tilde\G)$ is injective. Since , by Theorem \ref{323}, 
$\tilde\G$ is discrete, it follows that $\tilde\G$ is amenable (cf. \cite{Connes}).
\end{proof}

Let $i : N_\G \hookrightarrow L^\infty (\G)$ be the canonical
injection. Obviously, $i$ is weak$^*$ continuous, so we have the
pre-adjoint map $i_{*} : L^1 (\G) \twoheadrightarrow (N_\G)_{*}$.
Then direct calculation implies the following.
\begin{lemma}\label{344}
The map $i_{*} : L^1 (\G) \twoheadrightarrow (N_\G)_{*}$ is a 
completely bounded algebra homomorphism.
\end{lemma}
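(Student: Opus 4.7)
My plan is as follows. The map $i : N_\G \hookrightarrow \LL$ is a normal unital injective $^*$-homomorphism between von Neumann algebras; in particular, it is a weak$^*$-continuous complete isometry. Taking preadjoints of complete isometries of von Neumann algebras yields a complete metric surjection between the preduals, so $i_*$ is automatically completely bounded (indeed, $\|i_*\|_{cb}\le 1$). That takes care of the ``completely bounded'' half of the statement with essentially no work.

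The substantive point is therefore multiplicativity. The key observation I would use is one that is already implicit in the proof of Proposition \ref{343}: every $v\in Gr(\G)$ satisfies $\Gamma(v)=v\otimes v\in N_\G\vtp N_\G$, and since $\Gamma$ is a normal $^*$-homomorphism and $N_\G$ is, by definition, the von Neumann algebra generated by $Gr(\G)$, normality and multiplicativity of $\Gamma$ imply
\[
\Gamma(N_\G)\subseteq N_\G\vtp N_\G.
\]
Thus the restriction $\Gamma_{N_\G}:=\Gamma|_{N_\G}$ is a co-multiplication on $N_\G$, and its preadjoint $(\Gamma_{N_\G})_*$ is exactly the convolution product on $(N_\G)_*$.

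Given this, multiplicativity of $i_*$ is a one-line pairing. For $f,g\in\LO$ and any $x\in N_\G$,
\begin{eqnarray*}
\langle i_*(f\star g),x\rangle &=& \langle f\star g,i(x)\rangle
= \langle f\ot g,\Gamma(i(x))\rangle \\
&=& \langle f\ot g,(i\ot i)\Gamma_{N_\G}(x)\rangle
= \langle i_*(f)\ot i_*(g),\Gamma_{N_\G}(x)\rangle \\
&=& \langle i_*(f)\star i_*(g),x\rangle,
\end{eqnarray*}
where the third equality uses the fact that $\Gamma\circ i=(i\ot i)\circ\Gamma_{N_\G}$, i.e., compatibility of the co-multiplications on $N_\G$ and $\LL$. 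Since this holds for all $x\in N_\G$, we obtain $i_*(f\star g)=i_*(f)\star i_*(g)$.

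The only genuinely non-routine step is the invariance $\Gamma(N_\G)\subseteq N_\G\vtp N_\G$, and even that is really just a normality/$^*$-homomorphism argument, so there is no serious obstacle. Surjectivity of $i_*$ (encoded in the two-headed arrow) is automatic from $i$ being an injective normal morphism of von Neumann algebras.
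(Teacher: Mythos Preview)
Your proof is correct and is precisely the ``direct calculation'' that the paper alludes to without spelling out: the paper gives no explicit argument beyond the phrase ``direct calculation implies the following,'' and your pairing computation together with the invariance $\Gamma(N_\G)\subseteq N_\G\vtp N_\G$ is exactly what that calculation amounts to.
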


If a locally compact quantum group $\G$ satisfies the condition of Proposition \ref{343},
then $V N (G r (\G))\cong G r (\G)'' \cap L^\infty (\G)$, and
by the above, we have a surjective continuous algebra homomorphism $i_{*} : L^1 (\G)
\rightarrow A(Gr(\G))$. Therefore, in this case, many of the
algebraic properties of $L^1 (\G)$ will be satisfied by the
Fourier algebra of the intrinsic group as well.

There are many different equivalent characterizations of amenability
for a locally compact group. The question of whether the quantum counterpart
of these conditions are equivalent as well, remains unsolved in many important
instances.

In the following, we present a few of those equivalent characterizations in the
group case which can be found for instance in \cite{Volker}.
\begin{theorem}\label{017}
For a locally compact group $G$, the following are equivalent:
\begin{enumerate}
 \item $G$ is amenable;
 \item $L^1(G)$ is an amenable Banach algebra;
 \item $A(G)$ has a bounded approximate identity (BAI);
 \item $A(G)$ is operator amenable.
\end{enumerate}
\end{theorem}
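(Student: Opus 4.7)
The statement collects four classical characterizations of amenability, and the plan is to assemble each equivalence from the literature rather than to give a self-contained argument. I would arrange the four conditions in a cycle $(1)\Rightarrow(3)\Rightarrow(4)\Rightarrow(2)\Rightarrow(1)$, invoking one named theorem at each step.

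First, $(1)\Rightarrow(3)$ is Leptin's theorem: when $G$ is amenable, one constructs a BAI in $A(G)$ directly from functions of the form $\check{\chi}_K\star\chi_K/|K|$ associated with a Følner net $(K_\alpha)$ of relatively compact sets in $G$. Conversely, if $A(G)$ has a BAI $(u_\alpha)$, then the $u_\alpha$ are positive definite functions of norm tending to $1$, and this is one of the standard definitions of amenability. So $(1)\Leftrightarrow(3)$.

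Next, $(3)\Rightarrow(4)$: if $A(G)$ is amenable as a commutative completely contractive Banach algebra, one applies Ruan's theorem, which states that $A(G)$ is operator amenable if and only if $G$ is amenable. The converse $(4)\Rightarrow(1)$ is the nontrivial direction of Ruan's result; it proceeds by showing that operator amenability forces the existence of a bounded approximate diagonal in the operator projective tensor product $A(G)\tp A(G)$, and then slicing against evaluation functionals to recover a Reiter-type condition on $G$. Thus $(1)\Leftrightarrow(4)$, using (3) as a bridge.

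Finally, $(1)\Leftrightarrow(2)$ is Johnson's original theorem: one direction constructs, from a topological invariant mean $m$ on $L^\infty(G)$, a virtual diagonal in $(L^1(G)\tp L^1(G))^{**}$ via integration of $(\delta_g\otimes \delta_{g^{-1}})$ against $m$; the other direction extracts a mean by restricting a virtual diagonal. I would cite Johnson's memoir for the detailed construction rather than reproduce it.

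The main work is really bibliographic: each of the three equivalences $(1)\Leftrightarrow(2)$, $(1)\Leftrightarrow(3)$, $(1)\Leftrightarrow(4)$ is a well-known deep theorem, and the only possible obstacle is to make sure the operator space framework used for $(4)$ matches the one in the reference \cite{Volker}; no new ideas are needed beyond citing Leptin, Johnson, and Ruan.
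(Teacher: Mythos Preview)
Your proposal is correct and matches the paper's approach exactly: the paper does not give a self-contained proof of this theorem but simply attributes $(1)\Leftrightarrow(2)$ to Johnson, $(1)\Leftrightarrow(3)$ to Leptin, and $(1)\Leftrightarrow(4)$ to Ruan, citing \cite{Volker} for the collected statements. Your sketches of the underlying arguments go beyond what the paper records, but the strategy---assemble the three named equivalences from the literature rather than argue from scratch---is identical.
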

The equivalence $(1)\Leftrightarrow(2)$ is due to Johnson (1972),
$(1)\Leftrightarrow(3)$ is Leptin's theorem (1968),
and $(1)\Leftrightarrow(4)$ is due to Ruan (1995).
\begin{proposition}\label{345}
Let $\G$ be a discrete quantum group. If $L^1(\hat\G)$ has a BAI, then $\tilde\G$ is amenable.
\end{proposition}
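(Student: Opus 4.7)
The plan is to reduce the assertion to Leptin's theorem via the identifications in Theorem \ref{322} and the algebra homomorphism from Lemma \ref{344}. First I would observe that since $\G$ is discrete, $\hat\G$ is compact, and by Theorem \ref{323}, $\tilde\G$ is a discrete (locally compact) group. By Theorem \ref{317} we have the group isomorphism $\tilde\G \cong Gr(\hat\G)$.

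Next, I would apply Corollary \ref{346} to the compact quantum group $\hat\G$: this yields a von Neumann algebra identification
\[
N_{\hat\G} \cong VN(Gr(\hat\G)) \cong VN(\tilde\G).
\]
Taking preduals and using Lemma \ref{344} (applied to $\hat\G$), the pre-adjoint of the canonical inclusion gives a completely bounded surjective algebra homomorphism
\[
i_{*}: L^1(\hat\G) \twoheadrightarrow (N_{\hat\G})_{*} \cong A(\tilde\G).
\]

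Now, if $(e_\alpha)$ is a BAI in $L^1(\hat\G)$, then because $i_{*}$ is a bounded surjective algebra homomorphism, the net $(i_{*}(e_\alpha))$ is a BAI for the image $i_{*}(L^1(\hat\G)) = A(\tilde\G)$. Thus $A(\tilde\G)$ has a bounded approximate identity.

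Finally, by Leptin's theorem (the equivalence $(1)\Leftrightarrow(3)$ in Theorem \ref{017}) applied to the locally compact group $\tilde\G$, the existence of a BAI in $A(\tilde\G)$ forces $\tilde\G$ to be amenable. The only real content beyond bookkeeping is the transfer of the BAI through $i_{*}$; this is immediate from surjectivity and boundedness, so no genuine obstacle arises, and the main conceptual ingredient is the chain of identifications $L^1(\hat\G) \twoheadrightarrow A(Gr(\hat\G)) = A(\tilde\G)$ provided by the earlier results.
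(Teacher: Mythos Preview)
Your proposal is correct and follows essentially the same route as the paper: apply Corollary \ref{346} to the compact quantum group $\hat\G$ to identify $(N_{\hat\G})_*$ with $A(Gr(\hat\G))=A(\tilde\G)$, push the BAI of $L^1(\hat\G)$ through the surjective algebra homomorphism $i_*$ from Lemma \ref{344}, and conclude via Leptin's theorem (Theorem \ref{017}). Your write-up is in fact slightly more explicit than the paper's, which contains an apparent typo (writing $N_\G$ and $Gr(\G)$ where $N_{\hat\G}$ and $Gr(\hat\G)$ are meant).
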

\begin{proof} Since $\hat\G$ is compact, by Corollary \ref{346}, we have
$N_\G \cong VN(Gr(\G))$.
If $(\hat\omega_{\alpha})$ is a BAI for $L^1 (\hat\G)$, then as easily seen
$(i_{*} (\hat\omega_{\alpha}))$ is a BAI for $A(Gr(\G))$,
whence $Gr(\G)$ is amenable, by Theorem \ref{017}.
\end{proof}
\begin{proposition}\label{019283}
Let $\G$ be a discrete quantum group. If $\LO$ is operator amenable,
then $\tilde\G$ is amenable.
\end{proposition}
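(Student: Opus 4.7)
The plan is to reduce to Theorem \ref{347}, which already shows that amenability of a discrete quantum group $\G$ forces amenability of $\tilde\G$. It therefore suffices to prove the following quantum analogue of Johnson's theorem (the equivalence (1)$\Leftrightarrow$(2) of Theorem \ref{017}) in the discrete setting: operator amenability of $\loneqg$ implies that the discrete quantum group $\G$ is amenable.

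To establish this, I would extract a left invariant mean on $\LL$ from an operator virtual diagonal. Let $M\in\loneqg\tp\loneqg$ be such a diagonal supplied by operator amenability of $\loneqg$, i.e., $f\cdot M - M\cdot f\to 0$ for every $f\in\loneqg$, while the image of $M$ under the multiplication map approximates the identity of the relevant multiplier algebra. Crucially, since $\G$ is discrete, $\LL$ carries a bounded counit $\varepsilon$ with $(\iota\ot\varepsilon)\circ\Gamma = \iota$. Composing $M$ with $\iota\ot\varepsilon$ (after suitable normalization) produces a bounded net in $\LL^*$, any weak$^*$-cluster point of which yields a state $m\in\LL^*$.

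The approximate bimodule identities satisfied by $M$, transported through $\iota\ot\varepsilon$ and the compatibility of $\varepsilon$ with $\Gamma$, would then translate into the invariance equation
\[
(\iota\ot m)\circ\Gamma(x) = m(x)\cdot 1 \qquad (x\in\LL),
\]
so $m$ is a left invariant mean, witnessing amenability of $\G$. Once amenability of $\G$ is in hand, Theorem \ref{347} delivers amenability of $\tilde\G$ at once.

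The main obstacle is this first step: rigorously converting the merely approximate bimodule property of $M$ into the exact invariance equation for the limiting state $m$. This will require careful control of the weak$^*$-convergence, together with a clean use of the interplay between $\Gamma$ and the bounded counit $\varepsilon$ (the latter being the point at which discreteness of $\G$ enters decisively). Everything beyond this is a direct appeal to Theorem \ref{347}.
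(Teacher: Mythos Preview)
Your route is entirely different from the paper's, and the divergence begins with the hypothesis itself: the paper's proof actually works from operator amenability of $L^1(\hat\G)$, not $L^1(\G)$ --- the printed statement appears to contain a typo, consistent with the parallel hypotheses in Proposition~\ref{345} and the proposition immediately following this one. With that hypothesis the paper argues directly, never invoking Theorem~\ref{347}: since $\G$ is discrete, $\hat\G$ is compact, so Corollary~\ref{346} identifies $N_{\hat\G}\cong VN(Gr(\hat\G))=VN(\tilde\G)$, and Lemma~\ref{344} (applied to $\hat\G$) yields a completely bounded surjective algebra homomorphism $L^1(\hat\G)\twoheadrightarrow A(\tilde\G)$. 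Operator amenability passes to such quotients, so $A(\tilde\G)$ is operator amenable, and Ruan's equivalence $(1)\Leftrightarrow(4)$ in Theorem~\ref{017} gives amenability of $\tilde\G$ in one stroke. No amenability of $\G$ is ever established or needed.

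Taking the printed hypothesis at face value, as you do, your plan has a genuine issue exactly where you flag it. The object $\varepsilon$ you describe via $(\iota\otimes\varepsilon)\circ\Gamma=\iota$ is the counit on $L^\infty(\G)$, i.e.\ the \emph{unit} of the algebra $L^1(\G)$, not a character on $L^1(\G)$; so ``composing $M$ with $\iota\otimes\varepsilon$'' to produce an element of $\LL^*$ does not type-check. What actually does the job is the augmentation character $\chi:L^1(\G)\to\C$, $f\mapsto f(1)$: slicing a virtual diagonal by $\iota\otimes\chi$ turns $f\cdot M=M\cdot f$ into $f\star m=f(1)\,m$, and hence $(\iota\otimes m)\Gamma(x)=m(x)1$. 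Extracting an invariant \emph{state} from this still requires care with positivity and the Arens products, and the full implication ``$L^1(\G)$ operator amenable $\Rightarrow$ $\G$ amenable'' is essentially a quantum Johnson theorem (for Kac algebras, Ruan's result) --- a substantially heavier piece of machinery than the paper's two-line quotient argument.
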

\begin{proof} Since $\G$ is discrete, we have, by Lemma \ref{344},
a completely bounded surjective algebra homomorphism from $L^1(\hat\G)$ onto
$A(\tilde\G)$. Since $L^1(\hat\G)$ is operator amenable, then so is $A(\tilde\G)$.
Hence, by Theorem \ref{017}, $\tilde\G$ is amenable.
\end{proof}
\begin{proposition}
Let $\G$ be a discrete quantum group. If $L^1(\hat\G)$ is an amenable Banach algebra,
then $\tilde\G$ is almost abelian.
\end{proposition}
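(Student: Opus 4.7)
My plan is to mimic the argument of Proposition \ref{019283}, but replace operator amenability with classical (Johnson) Banach-algebra amenability throughout, and then close the gap by invoking the Forrest--Runde characterization of amenability for Fourier algebras.

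First I would set up the relevant quotient map. Because $\G$ is discrete, $\hat\G$ is compact, so Corollary \ref{346} identifies $N_{\hat\G}\cong VN(Gr(\hat\G))$. Combining this with the identification $\tilde\G\cong Gr(\hat\G)$ from Theorem \ref{317}, we obtain $(N_{\hat\G})_*\cong A(\tilde\G)$ as completely contractive Banach algebras. Applying Lemma \ref{344} to $\hat\G$ then yields a completely bounded (in particular norm continuous) surjective algebra homomorphism
\[
 i_*\colon L^1(\hat\G)\twoheadrightarrow A(\tilde\G).
\]

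Next I would appeal to the standard hereditary property of Johnson amenability: if $A$ is an amenable Banach algebra and $\theta\colon A\to B$ is a continuous surjective algebra homomorphism, then $B$ is also amenable (the induced map on Hochschild cohomology from any Banach $B$-bimodule kills derivations into it). Applying this to $i_*$ immediately upgrades the hypothesis to the statement that $A(\tilde\G)$ is amenable as a Banach algebra.

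Finally I would invoke the theorem of Forrest and Runde characterizing the locally compact groups $H$ for which $A(H)$ is amenable as a Banach algebra: precisely those $H$ admitting a closed abelian subgroup of finite index, i.e., the almost abelian groups. Applied to $H=\tilde\G$, this delivers the conclusion. The only non-routine ingredient is the Forrest--Runde theorem; everything on the quantum-group side is already in place from Corollary \ref{346}, Theorem \ref{317}, and Lemma \ref{344}, and simply has to be assembled to produce the surjection $i_*$ to which amenability can be transported.
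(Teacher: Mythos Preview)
Your proposal is correct and follows essentially the same route as the paper: the paper's own proof says explicitly that the argument is analogous to Proposition \ref{019283}, replacing Theorem \ref{017} by the Forrest--Runde theorem \cite[Theorem 2.3]{For-Run}, which is precisely what you do. The only difference is that you spell out the setup of the surjection $i_*$ and the hereditary property of amenability in more detail than the paper does.
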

\begin{proof} The argument is analogous to the one given in Proposition \ref{019283},
but instead of Theorem \ref{017}, we use \cite[Theorem 2.3]{For-Run},
stating that $A(G)$ is amenable if and only if $G$ is almost abelian.
\end{proof}

\end{document}